\theoremstyle{plain}
\newtheorem{theorem}{Theorem}
\newtheorem{lemma}[theorem]{Lemma}
\newtheorem{corollary}[theorem]{Corollary}
\newtheorem{definition}[theorem]{Definition}
\newtheorem{proposition}[theorem]{Proposition}
\theoremstyle{remark}
\newtheorem{remark}[theorem]{Remark}
\numberwithin{theorem}{section}
\newcommand{\bbA}{\mathbb A}
\newcommand{\bbB}{\mathbb B}
\newcommand{\bbC}{\mathbb C}
\newcommand{\bbD}{\mathbb D}
\newcommand{\bbF}{\mathbb F}
\newcommand{\bbH}{\mathbb H}
\newcommand{\bbQ}{\mathbb Q}
\newcommand{\bbR}{\mathbb R}
\newcommand{\bbX}{\mathbb X}
\newcommand{\bbZ}{\mathbb Z}
\newcommand{\calB}{\mathcal B}
\newcommand{\calC}{\mathcal C}
\newcommand{\calD}{\mathcal D}
\newcommand{\calE}{\mathcal E}
\newcommand{\calF}{\mathcal F}
\newcommand{\calL}{\mathcal L}
\newcommand{\calM}{\mathcal M}
\newcommand{\calO}{\mathcal O}
\newcommand{\calP}{\mathcal P}
\newcommand{\calS}{\mathcal S}
\newcommand{\calV}{\mathcal V}
\newcommand{\calX}{\mathcal X}
\newcommand{\calY}{\mathcal Y}
\newcommand{\calZ}{\mathcal Z}
\newcommand{\psm}[1]{\left( \begin{smallmatrix}#1 \end{smallmatrix} \right)}
\newcommand{\nm}{\mathrm{nm}}
\newcommand{\Nilp}{\mathrm{Nilp}}
\newcommand{\Spf}{\mathrm{Spf}}
\newcommand{\Spec}{\mathrm{Spec}}
\newcommand{\Lie}{\mathrm{Lie}}
\newcommand{\Hom}{\mathrm{Hom}}
\newcommand{\ChowHat}[1]{\widehat{\mathrm{CH}}{}^{#1}}
\newcommand{\ddc} {\mathrm{dd^c}}
\newcommand{\Dpre}{\calD_{\mathrm{pre}}}
\DeclareMathOperator{\End}{End}
\DeclareMathOperator{\Aut}{Aut}
\DeclareMathOperator{\Sym}{Sym}
\DeclareMathOperator{\Sp}{Sp}
\newcommand{\GL}{\mathrm{GL}}
\newcommand{\ord}{\mathrm{ord}}
\DeclareMathOperator{\Mp}{Mp}
\newcommand{\A}{\mathbb{A}}
\newcommand{\Z}{\mathbb{Z}}
\newcommand{\C}{\mathbb{C}}
\title{A genus two arithmetic Siegel-Weil formula on $X_0(N)$}
\author{Siddarth Sankaran, Yousheng Shi, Tonghai Yang}
\date{}
	\subjclass[2010]{11G18, 11F46, 14G40, 14G35}
	\thanks{SS was partially supported by an NSERC Discovery grant. TY was partially supported by Van Vleck Research grant and Dorothy Gollmar chair fund}
\begin{document}
	\maketitle
	
	\begin{abstract}
		We define a family of arithmetic zero cycles in the arithmetic Chow group of a modular curve $X_0(N)$, for $N>3$ odd and squarefree, and identify the arithmetic degrees of these cycles as $q$-coefficients of the central derivative of a Siegel Eisenstein series of genus two. This parallels work of Kudla-Rapoport-Yang for Shimura curves.
	\end{abstract}

	\tableofcontents

	\section{Introduction}
	
	In a series of work culminating in the book \cite{KRYbook}, Kudla, Rapoport and the third named author studied certain families of arithmetic ``special" cycles that live on arithmetic models of Shimura curves. Among their results are  \emph{arithmetic Siegel-Weil formulas} in genus one and two, which identify generating series of heights of arithmetic  cycles with derivatives of Eisenstein series. The results of \cite{KRYbook} comprise the most fully-developed example in Kudla's program, which seeks to establish systematic relations between arithmetic cycles on Shimura varieties and automorphic forms; while a substantial body of work has arisen in support of these conjectures, the case of modular curves has been largely overlooked.
	
	In this note, we fill this gap in the literature and prove the arithmetic Siegel-Weil formula for arithmetic zero cycles on $\calX_0(N)$ for odd, squarefree $N$ with $N>3$. More precisely, we construct a family of arithmetic zero cycles $\widehat \calZ(T,v)$, viewed in the arithmetic Chow group $\ChowHat{2}(\calX_0(N))$. Our main result, explicated in more detail in \Cref{sect2.5} and \Cref{sec:main theorem setup} below, is the identity
	\begin{equation}
	\sum_{T \in \Sym_2(\mathbb Z)} \, \widehat{\mathrm{deg}} \,  \widehat\calZ(T,v) \, q^T  \  = \ C \cdot E'(\tau, 0, \Phi^{\calL});
	\end{equation}
	here
	\begin{itemize}
		\item $\tau \in \mathbb H_2$, the Siegel upper half space of genus 2 and $v = \mathrm{Im}(\tau)$;
		\item $q^T = e^{2 \pi i \mathrm{tr}(\tau T)}$;
		\item  $C = \frac1{24}\prod_{p|N}(p+1)$;
		\item $\widehat{ \mathrm{deg}}$ is the arithmetic degree; and
		\item  $E'(\tau, 0, \Phi^{\calL})$ is the derivative of the Siegel Eisenstein series, evaluated at the centre of symmetry $s=0$, associated to the quadratic lattice $\calL$  given by
			\begin{equation}
		\calL= \left\{x=\begin{pmatrix} a &b/N \\ c & -a \end{pmatrix}:\,  a, b, c  \in \Z\right\},  \quad  Q_\calL(x)=N det (x) = -Na^2 -bc.
		\end{equation}
	\end{itemize}

	Our proof of the main result, which proceeds by matching the $q$-coefficients term-by-term, is closely modelled on the arguments in  \cite{KRYbook}. For positive definite $T$, it turns out that the cycle $\widehat{\calZ}(T,v)$ is concentrated in a special fibre at a prime $p$, and the geometric side of the theorem factors as the product of a local intersection number, determined by Gross and Keating \cite{GK}, and a point count; we then relate this computation to the corresponding coefficient of the Eisenstein series using explicit formulas due to the third named author.   An important step is to prove a local arithmetic Siegel-Weil formula ( \Cref{prop:local intersection number}), which  identifies the local intersection number with the central derivative of a local  Whittaker function.  For non-degenerate $T$ of signature $(1,1)$ or $(0,2)$, the special cycles are purely archimedean, and the result is essentially a special case of \cite[Theorem 5.1]{GarciaSankaran}. Finally, for degenerate $T$'s,   the identity can be reduced to the genus one case of the Siegel-Weil formula, as considered in \cite{DuYang}.
	
	Finally, we note that there is another quadratic lattice $L$ that is naturally associated to $X_0(N)$, which  is given  by
	\begin{equation}
	L=\left\{x=\begin{pmatrix} a &b \\ Nc & -a \end{pmatrix}:\,  a, b, c  \in \Z  \right\},  \quad  Q_L(x)= det(x) = -a^2 - N bc.
	\end{equation}
	The precise relation between $E'(\tau, 0, \Phi^{\calL})$ and $E'(\tau, 0, \Phi^{L})$  is given by  \Cref{prop5.3} and \eqref{eqUp1}. For the purposes of the computations in the present paper, it seems that the lattice $\mathcal L$ is more convenient.

	\section{Arithmetic special cycles on modular curves}
	\subsection{Modular curves}
	Throughout, we fix $N>3$ odd and squarefree. Let
	\begin{equation}
	\calX = \calX_0(N)
	\end{equation}
	denote the integral model, over $\Spec(\bbZ)$ of the modular curve of level $\Gamma_0(N)$, as in \cite{KatzMazur}. More precisely, $\calX$ denotes the Deligne-Mumford stack over $\mathrm{Spec}(\bbZ)$ whose $S$ points, for a scheme $S$ over $\Spec(\bbZ)$, parametrize diagrams
	\begin{equation}
	\varphi \colon \calE \to \calE'
	\end{equation}
	where $\calE$ and $\calE'$ are generalized elliptic curves over $S$, and $\varphi$ is a cyclic isogeny of degree $N$. As usual, we let $\calY = \calY_0(N)$ denote the open modular curve $ \calY = \calX \setminus \{ \text{cusps} \}$.
	
	For later use, we recall the following description of the complex points $Y_0(N) = \calY(\bbC)$ as an $O(1,2)$  Shimura variety.	Consider the quadratic space
	\begin{equation} \label{eqn:calV defn}
	\calV := M_2(\bbQ)^{tr = 0}, \qquad Q(x) = N \det(x)
	\end{equation} 	
	of signature $(1,2)$, and  let
	\begin{equation}
	\bbD = \bbD(\calV) \ = \ \{ z \in \calV\otimes_{\bbQ}\bbC \ | \ \langle z, z \rangle = 0, \langle z, \overline z \rangle < 0  \} \big/ \bbC^{\times}
	\end{equation}
	denote the symmetric space attached to $\calV \otimes \bbC = M_2(\bbC)^{tr=0}$; here $\langle x, y \rangle = -N \mathrm{tr}(x y)$ is the complex bilinear form with $\langle x, x \rangle = 2 Q(x) = 2N\det x$. The space $\mathbb D$ decomposes into two connected components
	\begin{equation}
	\bbD = \bbD^+ \coprod \bbD^-,
	\end{equation}
	where
	\begin{equation}
	\bbD^+  := \left\{  z = \psm{ a& b \\ c & -a } \in \calV \otimes \bbC \ | \ \langle z, z \rangle = 0, \, \langle z, \overline z \rangle < 0 , \, \mathrm{Im}(a/c) > 0  \right\} \big/ \bbC^{\times}.
	\end{equation}
	The group $\GL_2(\bbR)$ acts on $\calV \otimes \bbC$ by $\gamma \cdot v = \gamma v \gamma^{-1}$, which descends to an action on $\bbD$. A straightforward computation shows that $\bbD^+$ is invariant under the action of $\mathrm{SL}_2(\bbR)$, and the matrix $\mathrm{diag}(1,-1)$ interchanges the components.
	
	Moreover, we have a $\mathrm{SL}_2(\bbR)$-equivariant identification
	\begin{equation} \label{eqn:upper half plane sym space isom}
	\bbH  \stackrel{\sim}{\longrightarrow} \bbD^+, \qquad \tau \mapsto \mathrm{span}_{\bbC} \, z(\tau)
	\end{equation}
	where
	\begin{equation}
	z(\tau) = \begin{pmatrix}\tau  & -\tau^2 \\ 1 & - \tau \end{pmatrix} .
	\end{equation}
	This construction gives rise to identifications
	\begin{equation}
	\calY(\bbC) \simeq [\Gamma_0(N) \big\backslash \bbH] \simeq [\Gamma_0(N) \big \backslash \bbD^+].
	\end{equation}
	Note here we are viewing the quotients as orbifolds.
	
	\subsection{Special cycles}
	We begin by recalling the construction of special cycles, following  \cite{DuYang} or \cite{BruinierYang}.
	For a point $(\varphi \colon E \to E') \in \calX_0(N)(S)$, for some base scheme $S$, let
	\begin{equation}
	\End(\varphi) = \{  x \in \End(E) \ | \ \varphi \circ x \circ \varphi^{-1} \in \End(E')  \}
	\end{equation}	
	and define
	\begin{equation} \label{eqn:moduli lattice def}
	L_{\varphi} \ := \ \left\{ \alpha \in \End(\varphi) \ | \ \alpha + \alpha^{\dagger} = 0 \right\},
	\end{equation}
	where $\alpha^{\dagger}$ is the Rosati dual. We may equip $L_{\varphi}$ with the positive definite quadratic form
	\begin{equation}
	Q_{\varphi}(\alpha) = \deg (\alpha) = -  \alpha^2.
	\end{equation}

	\begin{definition} \label{def:special divisors moduli} For $m\in \bbZ$, let $\calZ(m)$ denote the moduli stack whose $S$ points, for a base scheme $S$, are given by	
		\begin{equation}
		\calZ(m)(S) := \left\{	(\varphi \colon E \to E', \alpha) \right\}
		\end{equation}
		where $\alpha \in L_{\varphi}$ satisfies  the following conditions:
		\begin{enumerate}[(a)]
			\item $Q_{\varphi}(\alpha) = mN$;
			and
			\item  the quasi-isogeny $\alpha \circ \varphi^{-1}$ is in fact an isogeny from $E'$ to $E$.
		\end{enumerate}
	\end{definition}

	\begin{remark}
		\begin{enumerate}[(i)]
			\item
		More generally, in  \cite{DuYang,BruinierYang} one finds a definition for cycles $\calZ(m, \mu_r)$ parametrized by $m \in \frac{1}{4N}\bbZ$ and $r \in \bbZ/2N \bbZ$ satisfying $r^2 \equiv -4 Nm \pmod{4N}$; for $m \in \bbZ$, the moduli problem for $\calZ(m)$ described above coincides with $\calZ(m, \mu_0)$ in the notation of loc.\ cit.\
			\item If $\alpha \in L_{\varphi}$ satisfies condition (b) above, then  $Q_{\varphi}(\alpha)$ is divisible by $N$. Moreover, if one omits this condition, the resulting moduli problem does not define a divisor on $\mathcal X_0(N)$.
		\end{enumerate}
	\end{remark}

	There is a natural forgetful map $\calZ(m) \to \calX_0(N)$, which allows us to view $\calZ(m)$ as a cycle on $\calX_0(N)$ (which, abusing notation, we denote by the same symbol).
	
	\begin{lemma}
		The cycle $\calZ(m)$ is a divisor on $\calX_0(N)$. Moreover, it is equal to the flat closure of its generic fibre $Z(m) = \calZ(m)_{/\bbQ}$. 			\begin{proof}
			We begin by showing that $\calZ(m)$ is a divisor. This is clear on the generic fibre, so it will suffice to verify this claim in an \'etale neighbourhood of a point in characteristic $p$. To this end, let $z \in \calZ(m)(\bar{\bbF}_p)$, lying above the point $x \in \calX_0(N)(\bar{\bbF}_p)$, and let $R_z$ (resp.\ $R_x$) denote the complete \'etale local rings of $z$ and $x$ respectively.
			Thus we have a surjection
			\begin{equation}
			R_x \to R_z = R_x / J
			\end{equation}			
			for some ideal $J$ that we would like to show is principal. By Nakayama's lemma, it will suffice to show that $I = J/mJ$ is principal, where $m$ is the maximal ideal of $R_x$. Note that $I^2 = 0$, so that the surjection
			\begin{equation}
			A := R_x / mJ \to R_z
			\end{equation}
			is a square-zero infinitesimal extension.
			
			Let $\widetilde \varphi \colon \calE \to \calE'$ denote the universal diagram over $R_x$, and $\widetilde \varphi_A \colon \calE_A \to \calE'_A$ (resp.\ $\widetilde \varphi_z \colon \calE_{R_z} \to \calE_{R_z}'$) its base change to $A$ (resp.\ $R_z$).
			
			By assumption, we have an action $\alpha_z \in \End(\widetilde \varphi_z)$ such that
			\begin{equation}
			\delta :=  \alpha_z \circ \widetilde \varphi_z^{-1}
			\end{equation} is an isogeny $\calE'_{R_z} \to \calE_{R_z}$. Moreover, the ideal $I \subset A$ is, by definition, the largest ideal such that $\delta$ deforms to a homomorphism  in $\Hom(\calE'_{A/I}, \calE _{A/I})$.
			
			Now we apply Grothendieck-Messing theory. Consider the Hodge exact sequences
			\begin{equation}
			0 \to \calF(\calE'_A) \to H^1_{dR}(\calE'_A) \to \Lie(\calE'_A) \to 0 \qquad \text{and} \qquad 					0 \to \calF(\calE_A) \to H^1_{dR}(\calE_A) \to \Lie(\calE_A) \to 0
			\end{equation}
			for $\calE'_A$ and $\calE_A$ respectively.  Since the map $A \to R_z$ is a square-zero thickening, the homomorphism $\delta \colon \calE'_{R_z} \to \calE_{R_z}$ induces a canonical $A$-linear map
			\begin{equation}
			H^1_{dR}(\calE_A') \to H^1_{dR}(\calE_A);
			\end{equation}	
			composing this with the maps in the Hodge exact sequences above, we obtain a map
			\begin{equation}
			\widehat \delta \colon \calF(\calE'_A) \to \Lie(\calE_A).
			\end{equation}
			Then Grothendieck-Messing theory implies that for any intermediate ring $B$ with
			\begin{equation}
			A \to B \to R_z
			\end{equation}
			the homomorphism $\delta $ lifts to an element of $\Hom(\calE'_B , \calE_B)$ if and only if $\widehat \delta \otimes B = 0$. This immediately implies that the ideal $I$ is given by the vanishing locus of $\widehat \delta$, and since $\calF(\calE'_A)$ and $\Lie(\calE_A)$ are free $A$-modules of rank 1, it follows that $I$ is principal, as required.
			
			It remains to show that $\calZ(m)$ is horizontal, i.e. does not contain any irreducible components in finite characteristic. Let $p$ be a prime, and suppose $z \in \calZ(m)(\bar{\bbF}_p)$   corresponds to $(\varphi\colon E \to E', \alpha)$ as before. If $p$ splits in $\bbQ(\alpha) = \bbQ(\sqrt{-mN})$, then $E$ and $E'$ are necessarily ordinary; if $p$ is non-split, then $E$ and $E'$ are both supersingular. However, every irreducible component of $\calX_0(N)(\bar{\bbF}_p)$ contains both ordinary and supersingular points, hence such a component cannot be contained in the support of $\calZ(m)$.
		\end{proof}
	\end{lemma}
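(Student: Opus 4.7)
The plan is to separate the two assertions: first, that $\calZ(m)$ is a Cartier divisor, and second, that it has no vertical (characteristic $p$) components. For the first, I would work étale-locally at a closed point $z \in \calZ(m)(\bar{\bbF}_p)$ lying over $x \in \calX_0(N)(\bar{\bbF}_p)$, presenting the complete local ring $R_z$ as a quotient $R_x/J$. Since $\calX_0(N)$ is regular of dimension two at $x$, it suffices to produce one generator for $J$. By Nakayama, I can further reduce to showing that $J/\m J$ is cyclic, which converts the problem into controlling a square-zero thickening $A = R_x/\m J \to R_z$ and the deformation of a single piece of data.

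Concretely, condition (a) of \Cref{def:special divisors moduli} is already automatic on $R_x$ (the endomorphism deforms into $\End(\varphi)$ on a divisor, and on the universal deformation the quadratic form is rigid); so the real content is condition (b), namely that the quasi-isogeny $\delta := \alpha \circ \varphi^{-1}$ is an honest isogeny $\calE' \to \calE$, equivalently that it lifts from $R_z$ to the intermediate rings between $A$ and $R_z$. Here I would invoke Grothendieck--Messing theory: the obstruction to lifting $\delta$ across a square-zero thickening $A \to B \to R_z$ is the vanishing of the composite
\begin{equation}
\widehat \delta \colon \calF(\calE_A') \longrightarrow H^1_{dR}(\calE_A') \xrightarrow{\;\delta^*\;} H^1_{dR}(\calE_A) \longrightarrow \Lie(\calE_A),
\end{equation}
and since both $\calF(\calE_A')$ and $\Lie(\calE_A)$ are free $A$-modules of rank one, the zero locus of $\widehat\delta$ is cut out by a single element of $A$. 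This element then generates $J$, proving principality.

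For horizontality, I would argue globally: if $z \in \calZ(m)(\bar{\bbF}_p)$ corresponds to $(\varphi\colon E\to E', \alpha)$, then $\alpha \in \End(E)$ is an element of a nonzero norm, so $\bbZ[\alpha] \simeq \bbZ[\sqrt{-mN}]$ embeds in $\End(E)$. Depending on whether $p$ splits, is inert, or ramifies in this imaginary quadratic order, the standard dichotomy forces $E$ (and likewise $E'$) to be either ordinary (split case) or supersingular (nonsplit case). Thus the support of $\calZ(m)$ in characteristic $p$ lies entirely in the ordinary locus or entirely in the supersingular locus. Since every irreducible component of $\calX_0(N)_{\bar \bbF_p}$ meets both loci (by the Eichler--Deuring / Igusa theory for $\Gamma_0(N)$ with $N$ squarefree), no such component can be contained in $\calZ(m)$, so $\calZ(m)$ has no vertical components.

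The main obstacle I anticipate is making the deformation-theoretic step rigorous in the stacky setting, and in particular verifying that the single Grothendieck--Messing obstruction really encodes \emph{both} the condition that $\alpha$ lies in $\End(\varphi)$ (i.e.\ extends compatibly across the isogeny) and the condition that $\delta$ is an honest isogeny; one must check that condition (a) of the moduli problem does not impose an additional cutting. A minor technical point is that the Rosati condition $\alpha + \alpha^\dagger = 0$ is preserved under deformation once $\alpha$ lifts as an endomorphism, so it imposes no further equations. With these verifications in place, the two-part structure above should give the lemma.
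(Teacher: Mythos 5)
Your proposal is correct and follows essentially the same strategy as the paper: reduce via Nakayama to showing $J/\mathfrak{m}J$ is cyclic, apply Grothendieck--Messing to express the obstruction as a map $\widehat\delta\colon \calF(\calE'_A)\to\Lie(\calE_A)$ between free rank-one $A$-modules, and prove horizontality via the ordinary/supersingular dichotomy determined by how $p$ behaves in $\bbQ(\sqrt{-mN})$. The concern you flag at the end (that the single obstruction for $\delta$ must capture all of conditions (a)--(c)) is handled implicitly in the paper by taking $I$ to be the largest ideal over which $\delta$ deforms as a homomorphism; your observation that the degree condition is rigid and that $\alpha=\delta\circ\varphi$ automatically lands in $\End(\varphi)$ is the correct way to see this, though the parenthetical claim that condition (a) is ``automatic on $R_x$'' is imprecise since $\alpha$ is not defined over all of $R_x$.
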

	
	We will also require a description of the complex points $\calZ(m)(\bbC)$. Consider the quadratic lattice 	
	\begin{equation} \label{eqn:lattice def}
	\calL := \left\{  \begin{pmatrix} a & b/N \\ c & -a\end{pmatrix} \Big| a,b,c\in \bbZ \right\} \ \subset \ \calV
	\end{equation}
	where $\calV = M_2(\bbQ)^{tr = 0}$, with quadratic form $Q(x) = N \det x$. 	
	
	Now given $x  = \psm {a & b/N \\ c & -a } \in \calL$ with $x \neq 0$, let
	\begin{equation}
	\bbD^+_x \ := \ \{ z \in \bbD^+ \ | \ \langle z, x \rangle = 0  \}.
	\end{equation}
	If $Q(x) \leq 0$, then $\bbD^+_x$ is  empty. Otherwise, $\bbD^+_x = \{  z(\tau_x ) \}$  consists of exactly one point, where $\tau_x \in \bbH$ is the root of the equation
	\begin{equation}
	c \tau^2 + 2 a \tau - b/N
	\end{equation}	
	with positive imaginary part.
	
	\begin{lemma} We have identifications
		\begin{equation}
		\begin{tikzcd}
		\mathcal{Z}(m)(\mathbb C) \arrow[d] \arrow[rr, "\sim"] &  & { \displaystyle \left[ \Gamma_0(N) \big\backslash \coprod_{\substack{x \in \calL \\ Q(x) = m}} \bbD^+_x \right]} \arrow[d]     \\
		\mathcal{Y}_0(N)(\mathbb C) \arrow[rr, "\sim"]         &  & {\left[ \Gamma_0(N) \big\backslash \mathbb H \right]}
		\end{tikzcd}
		\end{equation}
		\begin{proof}
			Given $x = \psm{a & b/N \\ c & -a } \in \calL$, let $\tau=\tau_x$ as above. Then the corresponding point of $\calY_0(N)(\bbC)$ is the diagram
			\begin{equation} \label{eqn:complex moduli point}
			\varphi_{\tau} \colon E_{\tau} = \bbC /  \Lambda_{\tau}   \ \stackrel{\times N}{\longrightarrow}  \ E'_{\tau} = \bbC / \Lambda_{N\tau}
			\end{equation}
			where $\varphi_{\tau}$ is multiplication by $N$, and $\Lambda_{\tau} = \bbZ + \bbZ\tau$ for $\tau \in \bbH$. Define an endomorphism $\alpha \in \End(\varphi_{\tau})$ by
			\begin{equation}
			\alpha(z + \Lambda_{\tau}) = z  (Na + Nc \tau) + \Lambda_{\tau}.
			\end{equation}
			It is straightforward to check that $\alpha$ satisfies the conditions in \Cref{def:special divisors moduli},
			and every such endomorphism arises in this way.
		\end{proof}
	\end{lemma}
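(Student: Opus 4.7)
The plan is to produce the horizontal map pointwise over $\tau \in \bbH$ by writing down an explicit bijection between the set of $\alpha \in L_{\varphi_\tau}$ satisfying conditions (a) and (b) of \Cref{def:special divisors moduli}, and the set of pairs $(x,z)$ with $x \in \calL$, $Q(x) = m$, $z \in \bbD^+_x$, and $z = z(\tau)$, using the moduli point $\varphi_\tau \colon E_\tau \to E'_\tau$ from \eqref{eqn:complex moduli point}. The $\Gamma_0(N)$-equivariance will then upgrade this to the asserted equivalence of orbifolds, with compatibility with the maps down to $\calY_0(N)(\bbC) \simeq [\Gamma_0(N)\backslash \bbH]$ built in automatically.

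Going from the lattice side to the moduli side: given $x = \psm{a & b/N \\ c & -a} \in \calL$ with $Q(x) = m$, set $\tau := \tau_x$ (so $z(\tau) \in \bbD^+_x$) and let $\alpha$ be multiplication by $\lambda := N(a + c\tau)$ on $E_\tau$. I would then verify the required properties in turn: (i) $\lambda \Lambda_\tau \subset \Lambda_\tau$ and $\lambda \Lambda_{N\tau} \subset \Lambda_{N\tau}$, which follow from the relation $Nc\tau^2 = -2Na\tau + b$ obtained by clearing denominators in the defining quadratic of $\tau_x$; (ii) the Rosati involution on $\End(E_\tau) \otimes \bbQ$ is complex conjugation, so $\alpha + \alpha^\dagger = 0$ reduces to $\lambda + \bar\lambda = 0$, which holds because $\tau + \bar\tau = -2a/c$ by Vieta; (iii) a short calculation gives $Q_{\varphi_\tau}(\alpha) = |\lambda|^2 = N(-Na^2 - bc) = NQ_\calL(x) = mN$; and (iv) $\alpha \circ \varphi_\tau^{-1}$ is multiplication by $a + c\tau$, which maps $\Lambda_{N\tau}$ into $\Lambda_\tau$ precisely because $a,b,c \in \bbZ$.

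For the inverse, any $\alpha \in L_{\varphi_\tau}$ is multiplication by some $\lambda \in \bbC$; the requirement that $\lambda$ preserve both $\Lambda_\tau$ and $\Lambda_{N\tau}$ forces $\lambda = N(a + c\tau)$ with $a,c \in \bbZ$, after which $\lambda + \bar\lambda = 0$ forces $\tau$ to satisfy $c\tau^2 + 2a\tau - b/N = 0$ for some $b \in \bbQ$. The crucial observation is that condition (b)---that $\alpha \circ \varphi_\tau^{-1}$ is a genuine isogeny $E'_\tau \to E_\tau$, not merely a quasi-isogeny---is equivalent to the integrality $b \in \bbZ$, placing the resulting matrix $x := \psm{a & b/N \\ c & -a}$ in $\calL$ rather than in its dual. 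This $x$ satisfies $Q_\calL(x) = m$ and $\tau = \tau_x$ by the computations above, giving the desired inverse.

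Finally, I would check $\Gamma_0(N)$-equivariance: for $\gamma \in \Gamma_0(N)$, the action on $\tau$ by Möbius transformations, on $\calL$ by conjugation $x \mapsto \gamma x \gamma^{-1}$, and on the moduli point by transport of structure are all compatible, which amounts to a direct matrix identity relating $\lambda$ for $x$ at $\tau$ with $\lambda$ for $\gamma x \gamma^{-1}$ at $\gamma\tau$. The main (though not deep) subtlety is extracting the integrality of $b$ in the reverse direction from condition (b); this is exactly the step that distinguishes the lattice $\calL$ of \eqref{eqn:lattice def} from its dual, and it explains why condition (b) was imposed in \Cref{def:special divisors moduli} in the first place.
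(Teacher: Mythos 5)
Your proposal follows the same basic strategy as the paper's proof — an explicit pointwise bijection via $\lambda = N(a+c\tau)$, with the $\Gamma_0(N)$-equivariance making it an equivalence of orbifolds — and your forward direction is correct. However, your reverse direction contains a genuine error in the attribution of the integrality constraints, which is the very point you single out as the "crucial observation."

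You claim that preservation of both $\Lambda_\tau$ and $\Lambda_{N\tau}$ forces $\lambda = N(a+c\tau)$ with $a,c \in \bbZ$, and that condition (b) then yields $b \in \bbZ$. This is backwards. Preservation of $\Lambda_\tau$ gives $\lambda = A + B\tau$ with $A,B \in \bbZ$ and $B\tau^2 = e + f\tau$ with $e,f \in \bbZ$; preservation of $\Lambda_{N\tau}$ then forces only $B = Nc$ with $c \in \bbZ$, placing no further condition on $A$. The Rosati condition $\lambda + \bar\lambda = 0$ gives $f = -2A$, so $\tau$ satisfies $c\tau^2 + \frac{2A}{N}\tau - \frac{e}{N} = 0$; comparing with $c\tau^2 + 2a\tau - b/N = 0$ yields $a = A/N$ and $b = e$. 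Thus $b = e \in \bbZ$ and $c \in \bbZ$ are already automatic at this stage, while $a = A/N$ is a priori only in $\frac{1}{N}\bbZ$. What condition (b) actually buys you is the integrality of $a$: the genuine-isogeny requirement that $\lambda/N = A/N + c\tau$ carry $\Lambda_{N\tau}$ into $\Lambda_\tau$, evaluated on $1 \in \Lambda_{N\tau}$, forces $A/N \in \bbZ$. This also matches the structure of the dual lattice $\calL^\vee$, which relaxes the integrality of the \emph{diagonal} entry $a$ to $\frac{1}{2N}\bbZ$, not that of $b$; and it is consistent with the remark following \Cref{def:special divisors moduli}, since $Q_{\varphi}(\alpha) = -A^2 - Nce$ is divisible by $N$ exactly when $N \mid A$. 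The overall approach is sound, but the roles of $a$ and $b$ in your discussion of condition (b) need to be swapped.
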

	
	Next, we turn our attention to 0-cycles. Let $\Sym_2(\bbZ)^{\vee} = \{ \psm{a & b/2 \\ b/2 & c} | a,b,c \in \bbZ  \}$, suppose
	\begin{equation}
	T = \begin{pmatrix} t_{11} & * \\ * & t_{22} \end{pmatrix} \in \Sym_{2}(\bbZ)^{\vee}
	\end{equation}
	and assume for the moment that $\det T \neq 0$.
	
	\begin{definition} Let  $\calZ(T)$ denote the moduli stack whose $S$ points parametrize tuples
		\begin{equation}
		\calZ(T)(S) \ := \ \left\{  (\varphi\colon E \to E', \alpha_1, \alpha_2 )\right\}
		\end{equation}
		where
		\begin{enumerate}[(a)]
			\item $\alpha_i \in L_{\varphi}$ for $i=1,2$;
			\item the moment matrix
			$$
			T_{\varphi}(\alpha_1, \alpha_2) =( \frac{1}2 (\alpha_i, \alpha_j))= \begin{pmatrix} - \alpha_1^2 & -\frac12\left( \alpha_1 \alpha_2 + \alpha_2 \alpha_1\right) \\ -\frac12\left( \alpha_1 \alpha_2 + \alpha_2 \alpha_1\right) & - \alpha_2^2  \end{pmatrix}
			$$
			satisfies
			$$
			T_{\varphi}(\alpha_1, \alpha_2) = NT,
			$$
			\item	and finally, that $\alpha_i \circ \varphi^{-1} \in \Hom(E', E)$ for $i=1,2$.
		\end{enumerate}
	\end{definition}
	
	To describe the geometric points of this stack, we first recall the following representability criteria. Let $V / \mathbb Q_{\ell}$  be a quadratic space of dimension $m = 3$. Define the character
	\begin{equation}
	\chi_{V,\ell}(x)  = (x, (-1)^{m(m-1)/2} \det V)_{\ell} =  (x, - \det V)_{\ell}.
	\end{equation}
	
	For a given choice of determinant $\det(V) \in \bbQ_{\ell}^{\times} / \bbQ_{\ell}^{\times, 2}$, there are exactly two non-isomorphic quadratic spaces $V^{\pm}_{\ell}$, distinguished by their Hasse invariants.
	
	Now if $T \in \Sym_2(\bbQ_{\ell})$ is non-degenerate, we have that $T$ is represented by $V_{\ell}$ if and only if
	\begin{equation} \label{eqn:Hasse criterion}
	\epsilon(V) =  \epsilon(T)  \, \chi_V(\det T)
	\end{equation}
	\cite[Prop.\ 1.3]{KudlaCentralDerivs}.

	Returning to the quadratic space $\calV$ as in \eqref{eqn:calV defn}, define the \emph{difference set} $\mathrm{Diff}(T,\calV)$ to be
		\begin{align}
			\mathrm{Diff}(T,\calV) &= \{  \ell \text{ finite prime} \ | \  T \text{ is not represented by } \calV_{\ell}    \} \\
			&= \{  \ell \ | \ \epsilon_{\ell}(T) = - (-1, N)_{\ell} \cdot (\det(T), -N)_{\ell}   \}.
		\end{align}

	\begin{lemma} \label{lemma:support of Z(T)}
		Suppose $T$ is non-degenerate.
		\begin{enumerate}
			\item 	If $T$ is not positive definite, or if $T>0$ and $\# \mathrm{Diff}(T,\calV) \geq 2$, then $\calZ(T) = \emptyset$.
			\item If $T >0$ and $\mathrm{Diff}(T,\calV) = \{ p \}$, then $\calZ(T)$ is supported in the supersingular locus in the special fibre $\calX_0(N)_{/ \bbF_{p}}$. In particular, $\calZ(T) \to \calX_0(N)$ determines a zero cycle.
		\end{enumerate}
		\begin{proof}
			Suppose we have a geometric point $z \in \calZ(T)(\bbF)$ for an algebraically closed field $\bbF$, corresponding to a tuple $\varphi \colon E \to E'$. Let $V_{\varphi} = \End(\varphi)^{tr= 0} \otimes \bbQ$,  equipped with the quadratic form $x \mapsto N^{-1} \deg x$. By assumption, we have that $V_{\varphi}$ represents $T$. However, $V_{\varphi}$ is positive definite, so if $T$ is not positive definite, we obtain a contradiction; thus $\calZ(T) = \emptyset$ if $T$ is not positive definite.
			
			Assuming that $T>0$ and continuing, we note that $\dim V_{\varphi} \geq 3$, so the characteristic of $\bbF$ is non-zero and $E, E'$ are supersingular elliptic curves. Setting $p = \mathrm{char}(\mathbb F)$, we may identify $V_{\varphi} = B^{tr = 0}$, where $B$ is the rational quaternion algebra ramified at $p$ and $\infty$, equipped with the quadratic form $N^{-1} \nm$, where $\nm(x)$ is the reduced norm. It follows that for $\ell \neq p$, the space $V_{\varphi,\ell}$ is isometric to $ \calV_{\ell} $, and so $\mathrm{Diff}(T, \calV) \subset \{ p \}$.
			
			Finally, note that $\det(V_{\varphi,p})= \det(\calV_p)$ and  $\epsilon_p(V_{\varphi,p}) = - \epsilon_p(\calV_p)$. Hence, applying the representability criterion \eqref{eqn:Hasse criterion}, we conclude that $p \in \mathrm{Diff}(T,\calV)$. As we have already seen that any geometric point of $\calZ(T)$ is in the supersingular locus, we have concluded the proof of the lemma.
		\end{proof}
	\end{lemma}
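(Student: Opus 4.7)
The plan is to analyze a hypothetical geometric point of $\calZ(T)$ and show that the existence of such a point forces strong constraints on $T$, namely positive definiteness and control of the difference set. The key tool is that the moment matrix condition $T_{\varphi}(\alpha_1,\alpha_2) = NT$ says exactly that the quadratic space $V_{\varphi} := \End(\varphi)^{tr=0} \otimes \bbQ$, equipped with the form $x \mapsto N^{-1}\deg(x)$, represents $T$ via $\alpha_1, \alpha_2$.

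First I would observe that $V_{\varphi}$ is \emph{always} positive definite: on endomorphisms of abelian varieties, the degree form is positive (this is standard, using Rosati positivity, and the anti-self-dual condition in \eqref{eqn:moduli lattice def} makes $V_\varphi$ a subspace of the trace-zero elements). This immediately gives the first bullet of (1): if $T$ is not positive definite, then no positive definite form can represent it, so $\calZ(T) = \emptyset$.

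Next, assume $T > 0$. Then $V_{\varphi}$ has dimension at least $2$ over $\bbQ$; I would split into cases based on $\dim V_{\varphi}$ (equivalently, whether $E$ and $E'$ are ordinary, CM, or supersingular). Over characteristic zero, $\End(E) \otimes \bbQ$ has rank at most $2$, whence $\dim V_{\varphi}^{tr=0} \leq 1$, which cannot represent a nondegenerate $T$ of rank $2$; the same bound holds for ordinary elliptic curves in positive characteristic. Hence $\mathrm{char}(\bbF) = p > 0$ and $E, E'$ are both supersingular, proving the supersingular support claim in (2). In that case $V_\varphi$ is identified with the trace-zero part of the rational quaternion algebra $B$ ramified precisely at $p$ and $\infty$, equipped with the norm form scaled by $N^{-1}$.

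Finally, to prove the statements about $\mathrm{Diff}(T,\calV)$, I would compare $V_{\varphi}$ and $\calV$ locally at each prime $\ell$. For $\ell \neq p, \infty$, both $V_{\varphi,\ell}$ and $\calV_\ell$ are the trace-zero part of $M_2(\bbQ_\ell)$ up to the $N^{-1}$ scaling (which doesn't affect the isometry class over $\bbQ_\ell$), so $V_{\varphi,\ell} \cong \calV_\ell$ and thus $\ell \notin \mathrm{Diff}(T,\calV)$ whenever $\alpha_1,\alpha_2$ exist. So $\mathrm{Diff}(T,\calV) \subset \{p\}$, which already rules out $\#\mathrm{Diff}(T,\calV) \geq 2$, completing (1). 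The remaining step, which I anticipate as the main calculation, is to verify that $p$ actually lies in $\mathrm{Diff}(T,\calV)$: since $B$ is ramified at $p$ while $M_2(\bbQ_p)$ is split, the local Hasse invariants of $V_{\varphi,p}$ and $\calV_p$ differ by a sign while their determinants coincide, so by the representability criterion \eqref{eqn:Hasse criterion} exactly one of $V_{\varphi,p}, \calV_p$ represents $T_p$; since $V_{\varphi,p}$ does, $\calV_p$ does not, and hence $p \in \mathrm{Diff}(T,\calV)$. This shows that when $\mathrm{Diff}(T,\calV) = \{p\}$ the cycle $\calZ(T)$ is indeed concentrated in characteristic $p$ and in the supersingular locus; zero-dimensionality then follows because the supersingular locus is finite.
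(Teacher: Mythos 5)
Your proposal is correct and follows essentially the same route as the paper's proof: analyze a hypothetical geometric point, observe that $V_\varphi := \End(\varphi)^{tr=0}\otimes\bbQ$ with the scaled degree form is positive definite and represents $T$, deduce supersingularity, compare $V_\varphi$ with $\calV$ locally away from $p$ to get $\mathrm{Diff}(T,\calV)\subset\{p\}$, and compare Hasse invariants at $p$ to get $p\in\mathrm{Diff}(T,\calV)$. The only differences are cosmetic: you spell out the dimension-count (ordinary/CM vs.\ supersingular) more explicitly than the paper's one-line ``$\dim V_\varphi\geq 3$'', and you add the closing remark about finiteness of the supersingular locus; one tiny imprecision is that the $N^{-1}$ vs.\ $N$ discrepancy between the two quadratic forms is a rescaling by $N^{-2}$, and it is that square factor, not the ``$N^{-1}$ scaling'' as such, which leaves the local isometry class unchanged.
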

	
	\subsection{Classes in arithmetic Chow groups} \label{sec:arith Ch gps}
	
	Let
	\begin{equation}
	\ChowHat{\bullet}_{\bbR}(\calX ) =  \bigoplus_{n=0}^2	\ChowHat{n}_{\bbR}(\calX)
	\end{equation}
	denote the arithmetic Chow ring, as originally constructed by Gillet and Soul\'e, see e.g.\ \cite{SouleBook}.    A  discussion extending the construction to the case of Deligne-Mumford stacks of dimension two can be found in \cite[\S 2]{KRYbook}.
	
	Roughly, a class in $\ChowHat{n}_{\bbR}(\calX)$ is represented by an \emph{arithmetic cycle} $(\calZ, g)$, where $\calZ$ is a codimension $n$ cycle on $\calX$, with $\bbR$-coefficients, and $g_Z$ is a Green current for $\calZ(\bbC)$, i.e.\ $g_Z$ is a current on $\calX(\bbC)$ of degree $(n-1,n-1)$ for which there exists a smooth form $\omega$ such that
	\begin{equation} \label{eqn: green eqn general}
	\ddc g_Z + \delta_{\calZ(\bbC)} = [\omega],
	\end{equation}
	holds; here $[\omega]$ is the current defined by integration against $\omega$. The rational arithmetic cycles are those of the form $\widehat{\mathrm{div}}(f) = (\mathrm{div}f, i_*[ - \log |f | ^2])$, where $f \in \bbQ(W)^{\times}$ for a codimension $n-1$ integral subscheme $\iota \colon W \hookrightarrow \calX$, together with classes of the form $(0, \partial \eta + \overline{\partial} \eta')$; by definition, the arithmetic Chow group $\ChowHat{n}_{\bbR}(\calX)$ is the quotient of the space of arithmetic cycles by the $\bbR$-subspace spanned by the rational cycles.
	
	Note that if $n=2$, for dimension reasons $\calZ(\bbC) =0$ and  Green's equation \eqref{eqn: green eqn general} is devoid of content; in particular, there is nothing linking $\calZ $ and $g$ in this case.
	
	We will also require a generalization of these Chow groups due to K\"uhn and Burgos-Kramer-K\"uhn. For a cusp $P$ of $X_0(N)$, let $\calP$ denote its flat closure in $\calX_0(N)$, and let $\calS = \sum \calP$ denote the cuspidal divisor.  In \cite{BKK}, the authors  present an abstract  framework that allows for more general kinds of Green objects in the the theory of arithmetic Chow groups. One of the examples they present utilizes ``pre-log-log forms" relative to a fixed normal crossing divisor. In our case, we take this fixed divisor to be the cuspidal divisor $\calS$, and  denote by
	\begin{equation}
	\ChowHat{\bullet}_{\bbR}(\calX, \Dpre ) = \bigoplus_n \ChowHat{n}_{\bbR}(\calX, \Dpre)
	\end{equation}
	the corresponding Chow ring. In particular, there is a natural map
	\begin{equation}
	\ChowHat{n}(\calX) \to \ChowHat{n}(\calX, \Dpre),
	\end{equation}
	an intersection product
	\begin{equation}
	\ChowHat{1}_{\bbR} (\calX, \Dpre) \times \ChowHat{1}_{\bbR}(\calX, \Dpre) \to \ChowHat{2}_{\bbR}(\calX, \Dpre) \qquad (\widehat Z_1,\widehat Z_2) \mapsto \widehat Z_1 \cdot \widehat Z_2,
	\end{equation}
	and a degree map
	\begin{equation}
	\widehat {\mathrm{deg} } \colon \ChowHat{2}_{\bbR}(\calX, \Dpre) \to \bbR.
	\end{equation}
	For convenience, we will often abbreviate
	\begin{equation}
	\langle \widehat \calZ_1, \widehat  \calZ_2 \rangle = \widehat{\mathrm{deg}} \left( \widehat \calZ_1 \cdot \widehat \calZ_2 \right) , \qquad \widehat \calZ_1, \widehat \calZ_2 \in \ChowHat{1}(\calX, \calD_{\mathrm{pre}})
	\end{equation}
	for the intersection pairing, which recovers the intersection pairing defined in \cite{Kuhn}.
	We will not require precise formulas for any of the aforementioned structures; the interested reader may consult \cite[\S 7]{BKK} for a complete discussion.
	
	\begin{remark} \label{rmk:stacky 1/2} Strictly speaking, the constructions in \cite{BKK} and \cite{Kuhn} only apply to schemes, and not Deligne-Mumford stacks. In our case, this technicality is essentially immaterial. Indeed, we have identifications
		\begin{equation}
		\calX(\bbC) \simeq \left[ \Gamma_0(N) \backslash \bbH \right] \simeq \left[ \{ \pm 1\} \backslash X_0(N) \right]
		\end{equation}
		as orbifolds, where $X_0(N)$ is the Riemann surface $\Gamma_0(N) \backslash \bbH$, and $\{ \pm 1\}$ acts trivially on $X_0(N)$. Hence the analytic constructions of \cite{BKK} can be carried out on $X_0(N)$; the only difference is that in numerical formulas, we include a factor of $1/2$ to reflect the presence of the group $\{ \pm 1\}$.
	\end{remark}
	\subsection{Special divisors}
	Here we recall the  special divisors defined in \cite{DuYang}, which include certain additional boundary components. For $v \in \bbR_{>0}$ and $m \in \bbZ$,  let
	\begin{equation}
	g(m,v) = \begin{cases}
	\frac{\sqrt{N}}{2 \pi \sqrt{v}} \beta_{3/2}(- 4 \pi m v), & \text{if } m <0 \text{ and } -Nm \text{ is a square} \\[10pt]
	\frac{\sqrt{N}}{2 \pi \sqrt{v}}, & \text{if } m = 0 \\[10pt]
	0, & \text{otherwise},					
	\end{cases}
	\end{equation}
	where, for $s \in \bbR$,
	\begin{equation}
	\beta_{s}(r) = \int_1^{\infty}	e^{-rt} t^{-s} dt.
	\end{equation}	
	The modified special cycle is defined to be		
	\begin{equation} \label{def:modified divisor}
	\calZ^*(m,v) := \calZ(m)  + g(m,v) \calS.
	\end{equation}
	where $\calS  = \sum\limits_{\calP  \, \mathrm{ cusps} } \calP$ is the cuspidal divisor.
	
	We now describe Green functions for these cycles, following \cite{KudlaCentralDerivs}. First, for $x \in \calV$ and $z \in \bbD$, let
	\begin{equation}
	R(x,z) := -\frac{ | \langle x, \zeta \rangle|^2}{\langle \zeta, \overline \zeta \rangle}
	\end{equation}
	where $\zeta \in z$ is any non-zero vector; note that the definition is clearly independent of $\zeta$. For $v \in \bbR_{>0}$,  and $m \neq 0$, we define \emph{Kudla's Green function}
	\begin{equation}
	\xi(m,v) := \sum_{\substack{x \in \calL\\ Q(x) = m}} \int_1^{\infty} e^{- 2 \pi R(x,z)v t} \frac{dt}{t} ,
	\end{equation}
	which is a smooth function on $\calY_0(N)(\bbC) - \calZ(m)(\bbC)$. The  behaviour of these functions near the cusps was determined by Du and Yang. In particular, \cite[Theorem 5.1]{DuYang} implies that $\xi(m,v)$ is a Green function for $\calZ^*(m,v)$, and so we obtain a class
	\begin{equation}
	\widehat \calZ(m,v) \ := \left(\calZ^*(m,v) , \, \xi(m,v) \right) \in \ChowHat{1}_{\bbR}(\calX)
	\end{equation}
	
	The case $m=0$ requires a bit more notation. First, let $\varphi_{\mathrm{un}}\colon \calE_{\mathrm{un}} \to \calE'_{\mathrm{un}}$, denote the universal diagram over $\calX  = \calX_0(N)$, where $\pi \colon \calE_{\mathrm{un}} \to \calX$ is a generalized elliptic curve, and let
	\begin{equation}
	\omega_N = \pi_*\left( \Omega^1_{\calE_{\mathrm{un}}/\calX} \right)
	\end{equation}
	denote the Hodge bundle; sections of this bundle correspond to modular forms of weight one. This bundle can be metrized  on $Y_0(N) =\calY_0(N)(\bbC)$ as follows: at a point $\tau \in \bbH^{\pm} \simeq \bbD$, corresponding to the diagram $\varphi_{\tau} \colon E_{\tau} \to E'_{\tau}$ as in  \eqref{eqn:complex moduli point}, the metric $\| \cdot \|_{\tau}$ at $\tau$ is determined by the formula
	\begin{equation} \label{eqn:metric}
	\| dz \|_{\tau}^2 =  (2 \sqrt{\pi} e^{-\gamma/2}) \, v, \qquad \tau = u + iv
	\end{equation}
	where $dz$ is the coordinate differential with respect to the uniformization $E_{\tau} = \bbC / (\bbZ + \bbZ \tau)$, and $\gamma = -\Gamma'(1)$ is the Euler-Mascheroni constant.  This metric is logarithmically singular at the cusps; this metrized line bundle induces a class
	in the generalized arithmetic Chow group, which  we also denote by
	\begin{equation}
	\widehat \omega_N \in \ChowHat{1}(\calX, \Dpre).
	\end{equation}
	
	Following \cite{DuYang}, we will need a slight modification to this bundle. Recall that for a prime $p |N$, the fibre $\calX_{p}$ at $p$ decomposes into two irreducible components $\calX_p^0$ and $\calX_p^{\infty}$, where $\calX_p^0$ (resp.\ $\calX_p^{\infty}$) is the component that contains the reduction of the cusp $\calP_0$ (resp. $\calP_{\infty}$) mod $p$.  We then define
	\begin{equation} \label{eqn:widehat L def}
	\widehat \omega := - 2 \widehat \omega_N  - \sum_{p|N} \widehat \calX_p^0  + ( 0, \log N),
	\end{equation}
	where $\widehat \calX_p^0 = ( \calX_p^0, 0) \in \ChowHat{1}(\calX)$ is the corresponding class.
	\begin{remark}
		The class $\widehat \omega$ can be identified as (the class of) the dual of the Hodge bundle of the product $\calE_{un} \times \calE_{un}'$ over $\calX_0(N)$ with an appropriately scaled metric; see \cite[\S 2.2]{HowardUnitaryCurves} for details.
	\end{remark}

	Next, define
	\begin{equation}
	\xi(0,v) \ = \ \sum_{\substack{x \in \calL \\ Q(x) = 0 \\ x \neq 0}} \int_1^{\infty} e^{ - 2 \pi v R(x,z) t} \frac{dt}t
	\end{equation}
	which is a smooth function on $Y_0(N)$. As shown in \cite{DuYang}, it is a Green function for the divisor $g(0,v) S$, with $\log-\log$ singularities at the boundaries, which therefore determines a class
	\begin{equation} ( g(0,v) \calS, \xi(0,v)) \in \ChowHat{1}_{\bbR}(\calX, \Dpre).
	\end{equation}
	It turns out that the linear combination $\widehat{\omega} + ( g(0,v) \calS, \xi(0,v))$, in fact lands in $\ChowHat{1}_{\bbR}(\calX)$, the usual Gillet-Soul\'e Chow group, see \cite[Proposition 6.6]{DuYang}.
	
	Finally, we set
	\begin{equation}
	\widehat{\calZ} (0,v) \ := \ \widehat \omega + \left( g(0,v) \calS, \xi(0,v)
	\right) - (0, \log v ) \ \in \ \ChowHat{1}_{\bbR}(\calX)
	\end{equation}

	\subsection{Special cycles in codimension two} In this section, we will attach an arithmetic cycle $\widehat \calZ(T,v)$ for every $T \in \Sym_2(\bbZ)$ and $v \in \Sym_2(\bbR)_{>0}$.  \label{sect2.5}
	
	We begin with the case that $T$ is positive definite. As in \Cref{lemma:support of Z(T)}, if the cycle $\calZ(T)$ is non-zero, it is supported in the fibre $\calX_p$ for a prime $p$. We may then define
	\begin{equation}
	\widehat \calZ(T,v) \ := \ (\calZ(T), 0) \ \in \ \ChowHat{2}_{\bbR}(\calX);
	\end{equation}
	note   that this class is independent of $v$.
	
	Next, suppose $T$ is non-degenerate and of signature $(1,1)$ or $(0,2)$, so that $\calZ(T) = \emptyset$. In this case, we define a purely archimedean class via the construction of \cite{GarciaSankaran}. We briefly recall the construction in the case at hand.
	Suppose $x = (x_1, x_2) \in \calV_{\bbR}^{\oplus 2}$ is a linearly independent pair of vectors, and consider the Schwartz form
	\begin{equation}
	\nu(x)  \in S(\calV_{\bbR}^2) \otimes A^{1,1}(\bbH)
	\end{equation}
	defined in \cite{GarciaSankaran}. Explicitly, we have
	\begin{equation}
	\nu (x) \ := \ \nu^o(x) e^{- 2 \pi (Q(x_1 )+ Q(x_2))}
	\end{equation}
	where
	\begin{equation}
	\nu^o(x) = \psi^o(x) \, d \mu
	\end{equation}
	with
	\begin{equation} \label{eq:psi0}
	\psi^o(x,\tau) =  \left(- \frac{1}{\pi} +  2 \sum_{i =1}^2( R(x_i,\tau) + 2 Q(x_i)) \right) e^{-2\pi (R(x_1, \tau) + R(x_2, \tau))}, \qquad x= (x_1, x_2)
	\end{equation}
	and
	\begin{equation}
	d \mu = \frac{du \wedge dv}{v^2}
	\end{equation}
	for $\tau = u + i v \in \bbH$; here we are abusing notation and writing $R(x, \tau) = R(x, [z_{\tau}])$ under the identification $\bbH^{\pm} \simeq \bbD$ as in \eqref{eqn:upper half plane sym space isom}.
	
	Next, we set
	\begin{equation}
	\Psi(x,\tau) = \int_1^{\infty}  \psi^o( \sqrt{t} x,\tau) \, \frac{dt}{t}.
	\end{equation}

	If the components of $x$ span a space of signature $(1,1)$ or $(0,2)$, then $\Psi(x, \cdot )$ is a smooth  function on $\mathbb H$.
	
	Finally, for $T \in \Sym_2(\bbQ)$ of signature $(1,1)$ or $(0,2)$, and $v \in \Sym_2(\bbR)_{>0}$, we choose  $a \in \GL_2(\bbR)$ with $v = a \cdot {}^t a$ and set
	\begin{equation}
	\Psi(T,v,\tau) = \sum_{\substack{ x \in \calL^2 \\ T(x) = T}} \Psi(xa,\tau)
	\end{equation}	
	and
	\begin{equation} \label{eqn:Green current indefinite case}
	\Xi(T,v) :=  \Psi(T,v,\tau) d\mu(\tau) .
	\end{equation}
	The sum converges absolutely to a  $\Gamma_0(N)$-invariant function on $\bbH$, and $\Xi(T,v)$ defines a smooth form on $Y_0(N)$.
	
	\begin{lemma} The form $\Xi(T,v)$ is absolutely integrable on $X_0(N)$, i.e.\
		\[
		\int_{X_0(N)} | \Psi(T,v,\tau) | d\mu(\tau) < \infty.
		\]
		\begin{proof} We first claim that it suffices to prove that for any $x = (x_1, x_2) \in \Omega(T)$, the form $\Psi(xa, \tau) d\mu(\tau)$ is absolutely integrable on $\mathbb H$; indeed, if this is the case, we would have
			\begin{equation} \label{eq:Psi}
			\int_{X_0(N)} \Psi(T,v,\tau) d \mu(\tau) = \int_{\Gamma_0(N)\backslash \bbH} \left( \sum_{\substack{ x \in L^2 \\ T(x) = T}}  \Psi(xa, \tau) \right) d\mu(\tau) = \sum_{\substack{x \in \Omega(T) \\ \text{mod} \ \Gamma_0(N)} }  \int_{\bbH} \Psi(xa, \tau) \, d \mu(\tau),
			\end{equation}
			where we use the fact that the stabilizer of $x$ in $\Gamma_0(N)$ is $\{\pm 1\}$, which acts trivially on $\bbH$; since there are finitely many $\Gamma_0(N)$ orbits, the lemma would follow by Fubini.
			
			To show the integrability of $\Psi(xa, \tau)$, fix a matrix $k \in SO(2)$ such that
			\begin{equation}
			x a k = (x_1,x_2) a k = (y_1, y_2)
			\end{equation}
			for a tuple $y = (y_1, y_2) \in V_{\bbR}^2$ such that
			\begin{equation}
			T(y) = \begin{pmatrix} \delta_1 & \\ & \delta_2 \end{pmatrix},
			\end{equation}
			with $\delta_1, \delta_2 \in \bbR^{\times}$ and $\delta_1 < 0$. It follows from \cite{GarciaSankaran} that $\Psi(xa, \tau) = \Psi(y, \tau)$. Furthermore, we have
			\begin{equation}
			\Psi(\gamma y, \tau) = \Psi(y, \gamma \cdot \tau)
			\end{equation}
			for $\gamma \in \GL_2(\bbR)$, acting diagonally on $V_{\bbR}^2$. Hence, in the integral over $\bbH$, we may act by an appropriate choice of $\gamma$ and assume without loss of generality that
			\begin{equation}
			y_1 = |\delta_1|^{\frac12} \begin{pmatrix} 1 & \\ & -1 \end{pmatrix}, \qquad y_2 =|\delta_2|^{\frac12} \begin{pmatrix} & 1 \\ \pm 1 & \end{pmatrix}
			\end{equation}
			where the sign in the matrix defining $y_2$ is $-$ if $T$ is of signature $(1,1)$, and $+$ if $T$ is of signature $(0,2)$.
			
			First, suppose $T$ has signature $(0,2)$.
			In this case, we have
			\begin{equation}
			R_1 \ := \ R(y_1, \tau) = \frac{ |\delta_1|}{4 v^2}  \cdot |\tau|^2, \qquad R_2 := R(y_2, \tau) = \frac{|\delta_2|}{4 v^2}\cdot |1 - \tau^2 |^2.
			\end{equation}
			Setting $c = \frac{\pi}2 \min(|\delta_1|, |\delta_2|)$ and writing $\tau = u + iv$, we then have
			\begin{align*}
			2 \pi(	R_1 + R_2) &\geq \frac{c}{v^2} \left( |\tau|^2 + |1 - \tau^2|^2 \right) \\
			& = \frac{c}{v^2} \left( u^2 + v^2 + (1- u^2)^2 + 2v^2(1+u^2) + v^4 \right)  \\
			&= \frac{c}{v^2} \left( v^2 + (1/2 - u^2)^2 + 3/4 + 2v^2(1+u^2) + v^4 \right) \\
			&> c \left( 3/4 \cdot v^{-2} + v^2 + 2 u^2 \right).
			\end{align*}
			This estimate easily implies that the integrals
			\begin{equation}
			\int_{\bbH} \int_1^{\infty} e^{-2 \pi t (R_1+R_2)}  \frac{dt}{t} d\mu(\tau) < \int_{\bbH} \frac{1}{2 \pi (R_1 + R_2)} e^{-2 \pi (R_1 + R_2)} d\mu(\tau)
			\end{equation}
			and
			\begin{equation}
			\int_{\bbH} \int_1^{\infty} (t R_i) \cdot e^{-2 \pi t (R_1+R_2)} \frac{dt}{t}d\mu(\tau) = \int_{\bbH} \frac{R_i}{2\pi (R_1 + R_2)}  e^{-2 \pi (R_1+R_2)} d \mu(\tau)
			\end{equation}
			are finite. So the integral (\ref{eq:Psi}) is absolutely  convergent,  which in turn implies the statement of the lemma. The case of signature $(1,1)$ is analogous.
		\end{proof}
	\end{lemma}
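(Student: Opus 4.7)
The plan is to unfold the sum defining $\Psi(T,v,\tau)$ against the $\Gamma_0(N)$-action on $\bbH$, reducing integrability over $X_0(N)$ to integrability of a single Schwartz-type term on $\bbH$, and then to exploit the $\GL_2(\bbR)$-equivariance of $\Psi$ to put the relevant vector into a normal form where explicit Gaussian estimates become available.

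First I would write
\begin{equation*}
\int_{X_0(N)} |\Psi(T,v,\tau)|\,d\mu(\tau) \ \leq \ \sum_{\substack{x \in \calL^2/\Gamma_0(N) \\ T(x) = T}} \int_{\bbH} |\Psi(xa,\tau)|\,d\mu(\tau),
\end{equation*}
using that the $\Gamma_0(N)$-stabilizer of a non-degenerate pair $x$ is $\{\pm 1\}$, which acts trivially on $\bbH$. Since $\Omega(T)/\Gamma_0(N)$ is finite (this is standard for $T$ non-degenerate of signature $(1,1)$ or $(0,2)$ in an indefinite ternary space), it suffices to bound each individual integral on $\bbH$.

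Next I would use equivariance in two steps: pick $k \in SO(2)$ so that $y := xak$ has diagonal moment matrix $T(y) = \mathrm{diag}(\delta_1,\delta_2)$ (possible because $T$ is real symmetric), noting $\Psi(xa,\tau) = \Psi(y,\tau)$ by \cite{GarciaSankaran}. Then use the transformation rule $\Psi(\gamma y,\tau) = \Psi(y,\gamma\cdot \tau)$ together with the $\GL_2(\bbR)$-action on $\calV$ by conjugation to translate $y$ into a fixed explicit representative: a diagonal traceless matrix $y_1$ and an off-diagonal traceless matrix $y_2$, with the sign in $y_2$ recording the signature of $T$. With this choice, the quantities $R_i = R(y_i,\tau)$ become explicit rational functions of $u,v$ (where $\tau = u + iv$).

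The heart of the argument is to bound $R_1 + R_2$ from below by a positive definite quadratic form in $u$ and $v+v^{-1}$, ensuring Gaussian decay in all cusps of $\bbH$ (i.e.\ as $v \to 0$, $v \to \infty$, or $|u| \to \infty$). In the signature $(0,2)$ case both $\delta_i < 0$, so $\bbD_{y_i}^+$ is empty for both vectors, and the quadratic lower bound is essentially automatic from a direct expansion. Combined with the polynomial factor $-\tfrac1\pi + 2\sum(R_i + 2Q(x_i))$ in $\psi^o$, which grows only polynomially, the $t$-integral $\int_1^\infty e^{-2\pi t(R_1+R_2)}\tfrac{dt}{t}$ is controlled by $(R_1+R_2)^{-1}e^{-2\pi(R_1+R_2)}$, yielding an absolutely integrable dominating function on $\bbH$ via Fubini.

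The main obstacle will be the signature $(1,1)$ case: here one of the $\delta_i$, say $\delta_2 > 0$, so $R(y_2,\tau)$ vanishes along the one-dimensional cycle $\bbD_{y_2}^+$, and one cannot hope for a uniform positive lower bound on $R_1+R_2$. The key observation to push through is that at any point of $\bbD_{y_2}^+$ the orthogonal vector $y_1$ is anisotropic in the negative definite plane transverse to $y_2$, so $R_1$ is bounded away from zero on a neighbourhood of this locus; combining this with the fact that $R_1$ alone already grows quadratically at all cusps of $\bbH$ (since $\delta_1 < 0$ forces $\bbD_{y_1}^+ = \emptyset$), one can again produce a Gaussian majorant. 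The calculation is analogous to the $(0,2)$ case but with $R_1$ playing the dominant role, and once the lower bound is in place the remainder of the integrability argument is identical.
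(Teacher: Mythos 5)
Your proposal takes essentially the same route as the paper: unfold the sum against $\Gamma_0(N)$, use $SO(2)$ to diagonalize the moment matrix and $\GL_2(\bbR)$-equivariance to reduce to a fixed normal form for $(y_1,y_2)$, compute $R_1, R_2$ explicitly, derive a pointwise lower bound, and then dominate the $t$-integral. The paper gives the full computation for signature $(0,2)$ and dismisses the $(1,1)$ case as analogous, which is accurate, and your outline for $(0,2)$ matches it.

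Where you diverge is in the heuristic you offer for the $(1,1)$ case, and there is a genuine (though small) error: you assert that since $\delta_1 < 0$ forces $\bbD_{y_1}^+ = \emptyset$, ``$R_1$ alone already grows quadratically at all cusps of $\bbH$.'' This is not so. With $y_1 = |\delta_1|^{1/2}\operatorname{diag}(1,-1)$ one has $R_1 \propto |\tau|^2/v^2 = (u^2+v^2)/v^2$, which tends to a finite constant as $v\to\infty$ with $u$ bounded. Nonvanishing of $\bbD_{y_1}^+$ controls zeros of $R_1$, not its behaviour at the cusp $v\to\infty$. The correct observation is that in the $(1,1)$ case $R_2 \propto |1+\tau^2|^2/v^2$, which does blow up as $v\to\infty$, so the \emph{sum} $R_1+R_2$ still admits a lower bound of the form $c\,(u^2 + v^2 + v^{-2})$, exactly as in the $(0,2)$ case (a direct expansion and an AM--GM step produces this). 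Your overall plan is correct, but the argument you sketch would fail as stated because it places the full burden of cuspidal growth on $R_1$; you need both terms.
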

	The preceding lemma implies that $\Xi(T,v)$ defines a current on $X_0(N)$ so in this case we obtain a class
	\begin{equation}
	\widehat \calZ(T,v) = 	(0, \Xi(T,v) ) \in \ChowHat{2}_{\bbR}(\calX).
	\end{equation}
	
	Next, we suppose $T$ has rank 1.
	We begin with the following simple observation:
	
	\begin{lemma} \label{lem:rank 1 T}
		Suppose $\mathrm{rank}(T) =1$. Then there exists $t \in \mathbb Z$ and an element $\gamma \in \GL_2(\bbZ)$ such that
		\begin{equation} T =  {}^t\gamma \,  \psm{0 & \\ & t }\, \gamma  			.	
		\end{equation}
		
		Moreover, the integer $t$ is uniquely determined by $T$, and is invariant upon replacing $T$ by ${}^t \sigma T \sigma$ with $\sigma \in \GL_2(\bbZ)$.
		\begin{proof}
			For the existence, write $T = \psm{t_1 & m \\ m & t_2}$, so that $m^2 = t_1 t_2$. If $t_1 = 0$, then we may take $\gamma = Id $ and $ t= t_2$. Otherwise, write $ - m / t_1 = a/c$ where $a $ and $c$ are relatively prime, and choose $b,d$ such that $ad - bc = 1$; then $\gamma = \psm{a & b\\ c & d}$ satisfies ${}^t \gamma T \gamma = \psm{0 & \\ & t}$ for some $t$, as required.

			To show that $t$ is uniquely determined, suppose that we have $\gamma \in \GL_2(\bbZ)$ with
			\begin{equation}
			{}^t\gamma \,  \psm{0 & \\ & t }\, \gamma =    \psm{0 & \\ & t' }
			\end{equation}
			Then $\gamma $ is of the form $\psm{ a & b \\ 0 & d}\in \GL_2(\bbZ)$, hence $d = \pm 1$, which in turn implies that $t' = t$.
		\end{proof}
	\end{lemma}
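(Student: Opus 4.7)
The plan is to prove the three claims---existence, uniqueness of $t$, and invariance under the congruence action of $\GL_2(\bbZ)$---by elementary matrix manipulations; none of the steps should present a real obstacle.

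For existence, I would argue as follows. Since $\mathrm{rank}(T) = 1$, the matrix $T$ is nonzero and $\det T = 0$, so $\ker(T \colon \bbZ^2 \to \bbZ^2)$ is free of rank $1$ and generated by some primitive vector $v \in \bbZ^2$. Extending $v$ to a $\bbZ$-basis of $\bbZ^2$, we obtain $\gamma \in \GL_2(\bbZ)$ whose first column is $v$; then $T\gamma$ has vanishing first column, so the symmetric matrix $\gamma^T T \gamma$ has both first column and first row equal to zero, hence equals $\psm{0 & 0 \\ 0 & t}$ for some $t \in \bbZ$. Alternatively, one can follow the direct recipe hinted at in the statement: writing $T = \psm{t_1 & m \\ m & t_2}$ with $m^2 = t_1 t_2$, if $t_1 \neq 0$, then reducing $-m/t_1 = a/c$ to lowest terms and choosing $b,d$ with $ad - bc = 1$ gives $\gamma = \psm{a & b \\ c & d}$ for which a one-line computation yields $\gamma^T T \gamma = \psm{0 & 0 \\ 0 & t}$.

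For uniqueness, I would show that if $\gamma = \psm{a & b \\ c & d} \in \GL_2(\bbZ)$ and $\gamma^T \psm{0 & 0 \\ 0 & t} \gamma = \psm{0 & 0 \\ 0 & t'}$, then $t = t'$. A direct expansion gives
\[
\gamma^T \begin{pmatrix} 0 & 0 \\ 0 & t \end{pmatrix} \gamma \;=\; \begin{pmatrix} tc^2 & tcd \\ tcd & td^2 \end{pmatrix}.
\]
Since $\mathrm{rank}(T)=1$ forces $t \neq 0$, the vanishing of the upper-left entry forces $c = 0$; then $ad = \pm 1$ gives $d = \pm 1$, and hence $t' = td^2 = t$.

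Finally, invariance under $T \mapsto {}^t\sigma\, T\, \sigma$ for $\sigma \in \GL_2(\bbZ)$ is immediate from the preceding two paragraphs: if $T = \gamma^T \psm{0 & 0 \\ 0 & t} \gamma$, then ${}^t\sigma\, T\, \sigma = (\gamma\sigma)^T \psm{0 & 0 \\ 0 & t} (\gamma\sigma)$ with $\gamma\sigma \in \GL_2(\bbZ)$, so the integer $t$ attached to ${}^t\sigma\, T\, \sigma$ coincides with the one attached to $T$ by the uniqueness step. As remarked at the outset, there is no substantive obstacle; the proof reduces to the short calculations above.
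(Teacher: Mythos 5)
Your proof is correct and in essence the same as the paper's: your existence argument via a primitive generator of $\ker T$ is a conceptual repackaging of the paper's explicit choice of $\gamma = \psm{a & b \\ c & d}$ with $a/c = -m/t_1$ in lowest terms (the column $\psm{a \\ c}$ is exactly that primitive kernel vector), and you also record the paper's recipe as an alternative. The uniqueness computation is identical, and the invariance step you spell out is the same immediate consequence of uniqueness that the paper leaves implicit.
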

	
	Following \cite{KRYbook}, we make the following definition.
	\begin{definition}  \label{def:Z(T) rank 1}
		
		Suppose $T$ is of rank 1, and let $v \in \Sym_2(\bbR)_{>0}$. Let $t$ be as in \Cref{lem:rank 1 T}.
		Define
		\begin{equation}
		\widehat{\calZ}(T,v)  \ := \   \widehat{\calZ}(t,v_0)  \cdot \widehat{\omega}  - \left( 0, \log \left( \frac{\det v}{v_0}\right) \delta_{\calZ^*(t, v_0)(\bbC)} \right)  \in \ChowHat{2}(\calX,\Dpre),
		\end{equation}
		where
		\begin{equation}
		v_0 :=  t^{-1} \mathrm{tr}(Tv) \in \mathbb R_{>0},
		\end{equation}		
		the cycle $\calZ^*(t, v_0)(\bbC)$ is the complex points of the modified cycle \eqref{def:modified divisor},
		and $\widehat \omega$ is the class defined in \eqref{eqn:widehat L def}
	\end{definition}
	
	We observe two invariance properties, which both follow immediately from definitions: the first is the identity
	\begin{equation} \label{eqn:calZ SL2 invariance}
	\widehat\calZ( {}^t \gamma T \gamma, v) = \widehat \calZ(T, \gamma v {}^t \gamma), \qquad \gamma \in \GL_2(\bbZ).
	\end{equation}
	For the second invariance property, suppose $\theta = \psm{1 & * \\ & 1} \in \mathrm{SL}_2(\bbR)$; then
	\begin{equation} \label{eqn:calZ theta invariance}
	\widehat \calZ\left( \psm{0 & \\ & t}, \theta v {}^t \theta \right) = \widehat\calZ \left( \psm{0&\\ & t}, v \right)
	\end{equation}
	for any $t \in \bbZ$ and $v \in \Sym_{2}(\bbR)_{>0}$.

	It remains to define the constant term.
	\begin{definition}
		When $T=0$, we set
		\begin{equation}
		\widehat \calZ(0,v) =  \widehat \omega \cdot \widehat \omega  \ +  \ \left( 0,
		\log \det v  \cdot [\Omega] \right)  \ \in \ \ChowHat{2}(\calX ,\Dpre),
		\end{equation}
		where $\Omega = \frac{dx \wedge dy}{2 \pi y^2}$, and $[\Omega]$ is the  current given by integration against $\Omega$.
	\end{definition}

	\subsection{The main theorem}  \label{sec:main theorem setup}

	We begin by briefly reviewing the theory of Siegel Eisenstein series, mostly to fix notation; for details, see \cite[\S 8]{KRYbook}, for example.  Let
	\begin{equation}
	\Sp_r = \left\{ g \in \GL_{2r} \ | \ {}^tg  \begin{pmatrix} & 1_r \\ -1_r &  \end{pmatrix} g =   \begin{pmatrix} & 1_r \\ -1_r &  \end{pmatrix} \right\}
	\end{equation}
	viewed as an algebraic group over $\bbQ$.
	
	For a place $v \leq \infty$, let $G_{r,v}$ denote the metaplectic cover of $\Sp_{r}(\bbQ_v)$; as a set,  we have $G_{r,v} = \Sp_r(\bbQ_v) \times \bbC^{1}$   with multiplication given by the normalized Leray cocycle (see \cite[\S 8.5]{KRYbook}).
	Similarly, $G_{r, \bbA} = \Mp_r(\bbA)$ denote the metaplectic cover of $\Sp_r(\bbA)$, which can be realized as a restricted direct product of the $G_{r,v}$. There is a unique splitting $\Sp_r(\bbQ) \to G_{r, \bbA}$ whose image we denote by $G_{r,\bbQ}$.
	Let $P_r = M_rN_r$ denote the standard Siegel parabolic of $\Sp_r$; here
	\begin{equation} \label{eqn:Siegel parabolic defns}
	M_r = \left\{ m(a) = \begin{pmatrix}a & \\ & {}^t a^{-1} \end{pmatrix} \ \Big| \ a \in \GL_r\right\} \qquad \text{and} \qquad 	N_r= \left\{ n(b) = \begin{pmatrix}1 &b \\ & 1 \end{pmatrix} \ \Big| \ b \in \Sym_r \right\} .
	\end{equation}
	We denote by $\tilde P_{r,\bbA} , \tilde M_{r, \bbA}$ and $\tilde N_{r, \bbA}$, the inverse images of $P_r(\bbA)$, $M_r(\bbA)$, and $N_r(\bbA)$ under the covering map $G_{r,\bbA} \to \Sp_r(\bbA)$.

	Given the quadratic space $\calV$ as in \eqref{eqn:calV defn}, we may define a character $\chi_{\calV}$ on $\tilde P_{\bbA}$ by the formula
	\begin{equation}
	\chi_{\calV} \left([m(a) n(b), \epsilon] \right)) = \epsilon \left( \det(a), - \det \calV \right)_{\bbA} = \epsilon \left( \det a , - N \right)_{\bbA} ,
	\end{equation}
	where $(\cdot, \cdot )_{\bbA}$ is the global Hilbert symbol. Similarly, for $s \in \mathbb C$, we define a character $| \cdot | ^s$ on $\widetilde P_{\bbA}$ by
	\begin{equation}
		\left| p \right|^{s} = | \det (a)|^s_{\bbA}, \qquad p = [m(a) n(b), \epsilon].
	\end{equation}
	Consider the degenerate principal series representation
	\begin{equation}
	I_r(s,\chi_{\calV}) := \{ \Phi\colon G_{r,\bbA} \to \bbC \text{ smooth} \ | \ \Phi(pg, s) = \chi_{\calV}(p) |p|^{s+\rho}  \Phi(g,s) \text{ for all } g \in  G_{r, \bbA}, p \in \tilde P_{r,\bbA}\},
	\end{equation}
	with $G_{r,\bbA}$ acting by right translation; here $\rho = \frac{r+1}2$.
	This representation factors as a restriced direct product
	\begin{equation}
	I_r(s,\chi_{\calV}) = {\bigotimes_{v \leq \infty}}' I_{r,v}(s,\chi_{\calV,v}).
	\end{equation}
	where
	$$
	I_{r,v}(s,\chi_{\calV,v}) := \{ \Phi\colon G_{r,v} \to \bbC \text{ smooth} \ | \ \Phi(pg, s) = \chi_{\calV,v}(p) |p|_v^{s+\rho}  \Phi(g,s) \text{ for all } g \in G_{r,v}, p \in \tilde P_{r,v}\},
	$$
	is the local analogue of $I_r(s, \chi_{\calV})$.
	
	For each finite prime $p$, let  $ K_p  = \Sp_{r}(\bbZ_p)$ and let $\tilde K_p$ denote its inverse image in $ G_{r,p}$. Similarly, let $K_{\infty} \simeq U(r)$ denote the standard maximal compact subgroup of $\Sp_{r}(\bbR)$, let $\tilde K_{\infty}$ denote its inverse image in $G_{r,\infty}$, and set $\tilde K \subset G_{r, {\bbA}} $  to be the inverse image of $K_{\infty} \prod_p K_p$.
	
	We say that a section $\Phi \in I_r(s, \chi_{\calV})$ (resp.\ $\Phi_v \in I_{r,v}(s, \chi_{\calV,v})$) is \emph{standard} if its restriction to $\tilde K$ (resp. $\tilde K_v$) is independent of $s$. Note that by the Cartan decomposition, a standard section is determined by its value at any fixed $s \in \bbC$.
	
	In this paper, we will primarily be concerned with following standard sections for $r=1,2$.
	
	\begin{definition} \label{def:calL global section}Let $\calL$ be the lattice described in \eqref{eqn:lattice def}. We define a standard section
		\begin{equation}
		\Phi_r^{\calL}(s) = \otimes_{v\leq \infty} \Phi^{\calL}_{r,v}(s) \in I_r(s, \chi_{\calV})
		\end{equation}
		as follows:
		\begin{itemize}
			\item 	At the place $\infty$, we set $\Phi^{\calL}_{r,\infty}(s)= \Phi_{r,\infty}^{3/2}(s)$, the standard section of scalar $\tilde K_{\infty}$-type $\det^{3/2}$. More precisely, there exists a character $\xi_{\frac12}$ of $\tilde K_{\infty}$ whose square descends to the determinant map on $U(r)$, cf. \cite[\S 8.5.6]{KRYbook}. Then $\Phi^{\calL}_{r,\infty}(s)$ is the standard section determined by the relation $\Phi^{\calL}_{r,\infty}(k, s) = \xi_{\frac12}(k)^3$ for all $k \in \tilde K_{\infty}$.
			\item  For $v < \infty$,  let $\Phi^{\calL}_{r,v}(g,s)$  be the local standard section attached to the lattice $\calL_{v}^{\oplus r}$ via the Rallis map; more precisely, $\Phi_{r,v}^{\calL}$ is determined by the relation
			\begin{equation}
			\Phi_{r,v}^{\calL}(g,s_0) =  \left( \omega_{\calV,v}(g) \varphi_v^{\calL} \right) (0), \qquad s_0  = \frac{3-r-1}2
			\end{equation}
			where $\omega_{\calV, v}$ is the Weil representation acting on $S(\calV_v^r)$, and $\varphi_v^{\calL}$ is the characteristic function of $\calL_v^r$.
		\end{itemize}	
	\end{definition}
	
	Our primary interest is in the case $r=2$, though we will have some need to consider the case $r=1$ as well.
	For $Re(s) > 3/2$ and $g \in G_{2,\bbA}$, let
	\begin{equation}
	E(g, s, \Phi^{\calL}_2) \ = \ \sum_{\gamma \in P_{2,\bbQ} \backslash G_{2,\bbQ} }  \Phi_2^{\calL}(\gamma g, s)
	\end{equation}
	denote the corresponding Eisenstein series; by the general theory of Eisenstein series, this series admits a meromorphic continuation to $s \in \bbC$. Moreover, $E(g,s, \Phi_2^{\calL})$ is \emph{incoherent} in the sense of \cite{KudlaCentralDerivs}, and hence $E(g, 0, \Phi_2^{\calL}) = 0$.
	
	It will be convenient to introduce ``classical" coordinates as follows. Let
	\begin{equation}
	\bbH_2 = \left\{ \tau  =u + iv \in \Sym_{2}(\bbC) \ | \ v > 0 \right\}
	\end{equation}
	denote the Siegel upper half-space; for $\tau = u + iv \in \bbH_2$,  choose any matrix $a \in \GL_2(\bbR)$ with $\det(a) > 0$ and $v = a \cdot {}^t a$, and write
	\begin{equation} \label{eqn:g_tau def}
	g_{\tau, \infty}= [n(u) m(a), 1]  \in G_{2,\infty}, \qquad g_{\tau} = \left( g_{\tau, \infty}, 1, \dots \right) \in G_{2,\bbA}.
	\end{equation}
	Define
	\begin{equation}
	E(\tau, s, \Phi_2^{\calL}) = \det(v)^{-3/4} E(g_{\tau}, s, \Phi_2^{\calL}),
	\end{equation}
	which is a (non-holomorphic) Siegel modular form of scalar weight $3/2$. We write its  $q$-expansion as
	\begin{equation}
	E(\tau, s, \Phi_2^{\calL}) = \sum_{T \in \Sym_2(\bbQ)} C_T(v, s, \Phi_2^{\calL}) \, q^T
	\end{equation}
	with $q^T = e^{2 \pi i tr(Tv)}$.
	
	\begin{theorem}
		For every $T \in \Sym_2(\bbQ)$, we have
		\begin{equation}
		\widehat \deg \, \widehat \calZ(T,v)  = \left( \frac{\prod_{p|N}(p+1)}{24} \right)\frac{d}{ds} C_T(v,s,\Phi_2^{\calL}) \Big|_{s=0}.
		\end{equation}
		In particular, we have an identity of $q$-expansions
		\begin{equation}
		\sum_T \widehat \deg \, \widehat \calZ(T,v) q^T = \left( \frac{\prod_{p|N}(p+1)}{24} \right) E'(\tau, 0, \Phi_2^{\calL}).
		\end{equation}
	\end{theorem}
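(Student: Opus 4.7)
The plan is to verify the identity by matching $q$-coefficients individually: for each $T \in \Sym_2(\bbQ)$, I would establish
$$
\widehat{\mathrm{deg}}\,\widehat{\calZ}(T,v) \;=\; C \cdot \frac{d}{ds} C_T(v,s,\Phi_2^{\calL})\Big|_{s=0}.
$$
The analytic side factors via a Fourier--Whittaker expansion $C_T(v,s,\Phi_2^{\calL}) = \prod_{v \le \infty} W_{T,v}(g_{\tau,v}, s, \Phi_{2,v}^{\calL})$ into a product over places, each factor being explicitly computable by Yang's formulas at finite places and by a standard archimedean calculation. The strategy then splits into four cases based on the signature and rank of $T$, mirroring the proof of the analogous result in \cite{KRYbook}.

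For $T > 0$, by \Cref{lemma:support of Z(T)} the cycle $\calZ(T)$ is supported in the supersingular locus of $\calX_{/\bbF_p}$ for the unique prime $p \in \mathrm{Diff}(T,\calV)$, so
$$
\widehat{\mathrm{deg}}\,\widehat{\calZ}(T,v) \;=\; \tfrac{1}{2}\log p \cdot \sum_{z \in \calZ(T)(\overline{\bbF}_p)} \frac{\mathrm{length}\,\calO_{\calZ(T),z}}{|\Aut(z)|}.
$$
I would factor this sum as a point count of supersingular points on $\calX_0(N)$ above $p$ (yielding the Eichler mass $\tfrac{1}{24}\prod_{\ell\mid N}(\ell+1)$, matching the constant $C$) times a local deformation length. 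Via Serre--Tate theory, the local length reduces to the deformation length of a pair of special endomorphisms on a supersingular elliptic curve, computed by the Gross--Keating invariant of \cite{GK}. The analytic counterpart at $p$ is the central derivative $W'_{T,p}(1,0,\Phi_{2,p}^{\calL})$, and their equality (up to the explicit product of non-vanishing local factors at the other places) is the content of the local arithmetic Siegel--Weil formula (\Cref{prop:local intersection number}). \textbf{This local identity is the main obstacle}: both sides have closed-form expressions, but their equality requires a careful combinatorial match between the Gross--Keating formula and Yang's explicit formulas for $W'_{T,p}$, while properly accommodating the auxiliary $\Gamma_0(N)$-level structure at primes $\ell \mid N$.

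For non-degenerate $T$ of signature $(1,1)$ or $(0,2)$, we have $\calZ(T) = \emptyset$, and the geometric side is purely archimedean: $\widehat{\mathrm{deg}}\,\widehat{\calZ}(T,v) = \tfrac{1}{2}\int_{X_0(N)} \Xi(T,v)$. The required identity is then essentially a specialization of \cite[Theorem 5.1]{GarciaSankaran} to the lattice $\calL$ inside the signature-$(1,2)$ space $\calV$. I would verify that the Schwartz form $\nu^o$ used to construct $\Xi(T,v)$ transgresses to the central derivative of the archimedean Whittaker kernel attached to $\Phi^{\calL}_{2,\infty}$, while the finite-place factors match via the standard Rallis-type computation (with no derivative contribution at finite places, since the relevant local Whittaker values do not vanish there).

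For $\mathrm{rank}(T) = 1$, using the invariances \eqref{eqn:calZ SL2 invariance}--\eqref{eqn:calZ theta invariance} together with their analytic analogues (equivariance of $C_T$ under $\GL_2(\bbZ)$ and the Siegel parabolic), I reduce to $T = \mathrm{diag}(0,t)$. Then \Cref{def:Z(T) rank 1} presents $\widehat{\calZ}(T,v)$ as an arithmetic intersection of $\widehat{\calZ}(t,v_0)$ with $\widehat{\omega}$ plus an explicit archimedean correction; the intersection is exactly the geometric side of the genus-one arithmetic Siegel--Weil formula proven in \cite{DuYang}. On the analytic side, the rank-one Fourier coefficient of $E(\tau,s,\Phi_2^{\calL})$ unfolds via a standard degenerate Whittaker argument to a genus-one Whittaker function against an explicit factor in $v$, from which the match follows by the genus-one formulas. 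The constant-term case $T = 0$ is handled analogously, using $\widehat{\omega}\cdot\widehat{\omega}$ on the geometric side (computable from the arithmetic self-intersection of the Hodge bundle in the Burgos--Kramer--K\"uhn formalism) together with the constant term of the genus-two Eisenstein series, again reducible to genus-one data via the Siegel--Weil formula for $\Phi_1^{\calL}$.
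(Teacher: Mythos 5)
Your proposal follows the same overall architecture as the paper: a case split by signature/rank of $T$, with Gross--Keating and the local arithmetic Siegel--Weil formula (\Cref{prop:local intersection number}) for $T>0$, the Garcia--Sankaran archimedean computation for signature $(1,1)$ or $(0,2)$, a reduction to the genus-one formula of Du--Yang for $\mathrm{rank}(T)=1$, and the K\"uhn/Bost self-intersection of the Hodge bundle for $T=0$. So the structure matches.

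However, your description of the positive-definite case contains a conceptual imprecision that would not survive if carried out. You claim the sum over $\calZ(T)(\overline{\bbF}_p)$ factors as ``a point count of supersingular points on $\calX_0(N)$ above $p$ (yielding the Eichler mass $\tfrac{1}{24}\prod_{\ell\mid N}(\ell+1)$, matching the constant $C$) times a local deformation length.'' This is not how the paper's proof works, and the cleaner claim it suggests is false: the relevant quantity is not the Eichler mass of all supersingular points but the weighted count $\sum_j \sum_{y\in\Omega(NT,V^{(p)}),\,\mathrm{mod}\,\Gamma_j}\varphi^{(p)}(h_j^{-1}y)$, a sum over lattice vectors of prescribed moment matrix $NT$ distributed across supersingular points, which is $T$-dependent and is \emph{not} the mass $\tfrac{1}{24}\prod(\ell+1)$. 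In the paper, the constant $C = \tfrac{1}{24}\prod(\ell+1)$ emerges from the local volume computation $\mathrm{vol}(K^{(p)},dh_{1,f})$ in \Cref{lemma:local volumes}, applied inside the Siegel--Weil formula for the \emph{coherent} section $\Phi^{(p)}$ attached to the positive-definite quaternion space, and the lattice-vector sum is then identified with the coherent Fourier coefficient $E_{NT}(g,0,\Phi^{(p)})$ via \Cref{cor:nondeg Eis shift}. Without this Siegel--Weil step linking the geometric count to a coherent Eisenstein coefficient, the argument does not close. (Your formula for $\widehat{\deg}$ also carries a spurious prefactor of $\tfrac12$ in front of $\log p$; the stacky $\tfrac12$ is already accounted for in the $|\Aut(z)|$ denominator, and double-counting it would throw off the final constant.) Finally, your opening statement that $C_T(v,s,\Phi_2^{\calL})$ factors as a product of local Whittaker functionals holds only for non-degenerate $T$; for degenerate $T$ one needs the decompositions in \Cref{lem:GS Eis FC rk 1} and \eqref{eqn:E0 decomposition}, which you use implicitly later, but the blanket claim as stated is wrong.
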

	This theorem will be proved in \Cref{sec:proof of main thm}.

	\section{Local special cycles and Whittaker functionals} 	\label{sec:local}
	In this section, we study local analogues of the special cycles, defined in terms of deformations of $p$-divisible groups. A result of Gross-Keating computes the degrees of these cycles, which we relate to derivatives of Whittaker functionals.

	\subsection{Degrees of local special cycles}

	Fix a prime $p$,  let $\bbF = \overline{\bbF}_{p}$ be an algebraic closure of $\bbF_p$, $W = W(\bbF)$ the ring of Witt vectors and $W_{\bbQ} = W \otimes_{\bbZ} \bbQ$ its field of fractions. Denote by $\Nilp_{p}$ the category of local $W$-algebras such that $p$ is nilpotent.
	
	Let $\bbX$ denote the (unique, up to isomorphism) supersingular $p$-divisible group of height 2 and dimension 1 over $\bbF$. Then $\End(\bbX)$ is the maximal order in the division  quaternion  algebra over $\bbQ_{p}$; we denote the main involution by $ x \mapsto x^{\iota}$, and the reduced norm by $\nm(x)$.
	
	Fix a uniformizer $\varpi$. Let $\varphi \in \End(\bbX)$ be an isogeny of degree $N$. By composing with an automorphism of $\bbX$, we may assume without loss of generality that
	\begin{equation}
	\varphi = \begin{cases} id, & p \nmid N\\
	\varpi, & p|  N.
	\end{cases}
	\end{equation}
	
	With this setup in place, we recall the relevant Rapoport-Zink space: let $\calM = \calM_{\Gamma_0(N)}$ denote the moduli space (over $\Nilp_{p}$) of diagrams $(\tilde\varphi\colon X \to X')$ where $X$ and $X'$ are deformations of $\bbX$ and $\tilde \varphi$ is an isogeny lifting $\varphi \in \End(\bbX)$.
	
	If $p \nmid N$, then $\varphi = id$, and $\calM_{\Gamma_0(N)}$ is isomorphic to the moduli space $\calM_0$ of deformations of $\bbX$, which in turn is isomorphic to  $\Spf(W \llbracket t \rrbracket)$.
	
	\begin{definition}[Local special cycles]
		\begin{enumerate}[(i)]
			\item
			For  $ y\in \End(\bbX)^{tr=0}$, let $Z(y)$ denote the moduli problem (over $\Nilp_p$) parametrizing tuples $\{ (\phi \colon X_1 \to X_2, \delta)\}$ where
			\begin{itemize}
				\item $X_1$ and $X_2$ are deformations of $\bbX$;
				\item   $\phi$ is an isogeny lifting $\varphi$;
				\item $\delta \in \Hom(X_2, X_1)$ is an isogeny lifting $y \circ \varphi^{-1}$.
			\end{itemize}
			Note that if the last condition holds, then $\delta \circ \phi$ is an element of $\End(\phi)$ lifting $y$.
			
			\item If $y = (y_1, y_2)$ is a linearly independent pair of elements in $\End(\bbX)^{tr=0}$, then we set
			\begin{equation}
			Z(y) = Z(y_1) \times_{\calM} Z(y_2)
			\end{equation}
			to be the intersection.
		\end{enumerate}
		
	\end{definition}
	
	For a pair of vectors $f_1, f_2 \in \End(\bbX)$, let $\langle f_1, f_2 \rangle = f_1 \cdot f_2 ^{\iota} + f_2 \cdot f_1^{\iota}$ denote the bilinear form attached to the quadratic form $Q(f) = \mathrm{nm}(x)$, where $\mathrm{nm}(x)$ is the reduced norm. We set
	\begin{equation} \label{eq:T}
	T(f_1, f_2) \ := \ \frac12 \begin{pmatrix} \langle f_1, f_1 \rangle & \langle f_1, f_2 \rangle \\ \langle f_1, f_2 \rangle & \langle f_2, f_2  \rangle \end{pmatrix}.
	\end{equation}
	
	\begin{proposition}[Gross-Keating] \label{prop:local deg}
		Suppose  $y_1, y_2  \in \End(\bbX)^{tr=0}$ is a linearly independent pair of vectors , and let  $\delta_i = y_i \circ \varphi^{-1}$ for $i = 1,2$. Let
		\begin{equation}
		T = T(\delta_1, \delta_2)
		\end{equation}
		and let $0 \leq a_1 \leq a_2 \leq a_3$ denote the Gross-Keating invariants of the matrix $\mathrm{diag}(N, T)$. Then
		\begin{equation}
		\nu_p(T) :=	\deg Z(y)
		\end{equation} only depends on $T$, and is given by the following explicit formulas.
		
		If $a_1 \equiv a_2 \pmod 2$, then
		\begin{align}
		\nu_p(T) &= \sum_{i=0}^{a_1 - 1} (i+1)(a_1 + a_2 + a_3 - 3i)p^i + \sum_{i = a_1}^{(a_1 + a_2 - 2)/2} (a_1 + 1) (2 a_1+a_2 + a_3 - 4i)p^i \\
		& \qquad \qquad + \frac{a_1+1}{2}(a_3 - a2 + 1) p^{(a_1 + a_2)/2}
		\end{align}
		and if $a_1 \not\equiv a_2 \pmod 2$, then
		\begin{equation}
		\nu_p(T) = \sum_{i=0}^{a_1 - 1} (i+1) (a_1 + a_2 + a_3 - 3i) p^i + \sum_{i = a_1}^{(a_1 + a_2 - 1)/2} (a_1 + 1)(2a_1 + a_2 + a_3 - 4i)p^i.
		\end{equation}
		\begin{proof}
			Given any $f \in \End(\bbX)$, let $Z(f)_{GK}$ denote the locus on $\calM_0 \times \calM_0$ on which $f$ deforms to an isogeny $f \colon X_1 \to X_2$ where $X_1$ and $X_2$ are deformations of $\mathbb X$. Then the deformation locus $Z(y)$ coincides with the triple intersection $Z(\varphi)_{GK} \cdot Z(\delta_1)_{GK} \cdot Z(\delta_2 )_{GK}$ on $\calM_0 \times \calM_0$.
			
			Note that for $i = 1,2$,
			\begin{equation}
			\langle \delta_i,  \varphi \rangle  = \delta_i \cdot \varphi^{\iota} +  \varphi \cdot (\delta_i)^{\iota}  =  (y_1 \varphi^{-1}) \cdot (- \varphi) + \varphi (- \varphi^{-1} \cdot y_i^{\iota}) =  - tr(y_i) = 0
			\end{equation}
			Therefore, the restriction of the quadratic form $Q(x) = \deg(x)$ to $\mathrm{span}\{  \varphi, \delta_1, \delta_2 \}$ is represented by the matrix $\mathrm{diag}(N, T)$ with respect to the basis $\{ \varphi, \delta_1, \delta_2 \}$. The desired formula is then \cite[Proposition 5.4]{GK}.
		\end{proof}
	\end{proposition}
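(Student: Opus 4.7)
The plan is to reduce $\deg Z(y)$ to a triple intersection number on the double Rapoport-Zink space $\calM_0 \times \calM_0$ and invoke the Gross-Keating intersection formula directly.

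First, I would identify $\calM = \calM_{\Gamma_0(N)}$ with a closed formal subscheme of $\calM_0 \times \calM_0$. For any $f \in \End(\bbX)$, let $Z(f)_{GK} \subset \calM_0 \times \calM_0$ denote the locus on which $f$ deforms to a homomorphism between the two universal deformations. Then by definition $\calM = Z(\varphi)_{GK}$, and each moduli condition defining $Z(y_i)$, namely that $\delta_i := y_i \circ \varphi^{-1}$ lifts to an isogeny $X_2 \to X_1$, coincides with the Gross-Keating deformation condition cutting out $Z(\delta_i)_{GK}$. Thus as cycles on $\calM_0 \times \calM_0$,
\begin{equation}
Z(y) \;=\; Z(y_1) \times_{\calM} Z(y_2) \;=\; Z(\varphi)_{GK} \cdot Z(\delta_1)_{GK} \cdot Z(\delta_2)_{GK},
\end{equation}
and the degree is the corresponding triple intersection number.

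Second, I would compute the quadratic form, with respect to $Q(x) = \nm(x) = \deg(x)$, on the $3$-dimensional subspace $W = \mathrm{span}_{\bbQ_p}\{\varphi, \delta_1, \delta_2\} \subset \End(\bbX)_{\bbQ}$. By construction $Q(\varphi) = \deg \varphi = N$, and $T(\delta_1, \delta_2) = T$ by the definition in \eqref{eq:T}. The mixed terms vanish: since $y_i \in \End(\bbX)^{tr=0}$, one has $y_i^\iota = -y_i$, and hence
\begin{equation}
\langle \delta_i, \varphi \rangle \;=\; \delta_i \varphi^\iota + \varphi \delta_i^\iota \;=\; y_i \varphi^{-1} \varphi^\iota - \varphi (\varphi^{\iota})^{-1} y_i \varphi^\iota \cdot(\tfrac{1}{N}) \cdot N \;=\; -\mathrm{tr}(y_i) \;=\; 0,
\end{equation}
using $\varphi \varphi^\iota = N$. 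Consequently $W$ with the form $Q$ is represented by the block-diagonal Gram matrix $\mathrm{diag}(N, T)$.

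Third, with this quadratic datum identified, I would invoke \cite[Proposition 5.4]{GK}, which expresses the triple intersection multiplicity $(Z(\varphi)_{GK} \cdot Z(\delta_1)_{GK} \cdot Z(\delta_2)_{GK})$ purely in terms of the Gross-Keating invariants $0 \le a_1 \le a_2 \le a_3$ of $\mathrm{diag}(N,T)$, with the formula bifurcating according to the parity $a_1 \equiv a_2 \pmod 2$ versus $a_1 \not\equiv a_2 \pmod 2$. This yields the two displayed formulas, and in particular shows $\deg Z(y)$ depends only on $T$. The main obstacle is the clean bookkeeping in the first step: verifying that the scheme-theoretic fibre product $Z(y_1) \times_\calM Z(y_2)$ really matches the triple Gross-Keating intersection (so that the cited formula applies without modification), and checking that one has chosen $\varphi$ to minimize its endomorphism by an automorphism so that the reduction to $\mathrm{diag}(N,T)$ is valid; once this is in place, the rest is a direct application of \cite{GK}.
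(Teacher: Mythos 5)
Your proposal follows essentially the same route as the paper's proof: view $Z(y)$ as the triple intersection $Z(\varphi)_{GK}\cdot Z(\delta_1)_{GK}\cdot Z(\delta_2)_{GK}$ on $\calM_0\times\calM_0$, verify $\langle\delta_i,\varphi\rangle=0$ so that the Gram matrix of $Q$ on $\mathrm{span}\{\varphi,\delta_1,\delta_2\}$ is $\mathrm{diag}(N,T)$, and invoke \cite[Prop.~5.4]{GK}. The intermediate algebra you wrote for the orthogonality relation is garbled (and only coherent once you use the normalization $\varphi\in\{1,\varpi\}$ so that $\varphi^\iota=\pm\varphi$), but the conclusion $\langle\delta_i,\varphi\rangle = -\mathrm{tr}(y_i)=0$ is correct and matches the paper.
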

	
	\subsection{Local Whittaker functionals and special cycles}
	
	The main point of this section is to express  the local intersection number, in the previous section, in terms of Whittaker functionals. Recall that for a standard section $\Phi_p \in I_{2,p}(s, \chi_{\calV,p})$ and $T \in \Sym_2(\bbQ_p)$, the Whittaker functional is defined to be
	\begin{equation}
	W_{T,p}(g, s, \Phi_p) := \int_{\Sym_2(\bbQ_p)}  \Phi( w_p^{-1}  n(b)g , s) \, \psi_p(- tr (Tb))  \, db
	\end{equation}
	where
	\begin{itemize}
		\item $w_p = \left[ \psm{& -1_2 \\ 1_2 & } , 1\right] \in G_p$;
		\item for $b \in \Sym_2(\bbQ_p)$, we write $n(b) = \left[ \psm{1 & b \\ & 1 }, 1 \right]$;
		\item $\psi_p \colon \bbQ_p \to \bbC$ is the standard additive character that is trivial on $\bbZ_p$
		\item and $db$ is the additive Haar measure on $\Sym_2(\bbQ_p)$ that is self-dual with respect to the pairing $(b_1 , b_2) \mapsto \psi_p(tr(b_1 b_2))$.
	\end{itemize}
	
	In addition to the section $\Phi_{p}^{\calL} = \Phi_{2,p}^{\calL}$ as in \Cref{def:calL global section}, we also need the following auxilliary section. Let
	\begin{equation}
	V^{ra}_{p} := (B_{p} )^{tr = 0}
	\end{equation}
	denote the space of traceless elements of the division quaternion algebra $B_{p}$ over $\bbQ_{p}$. We equip $V^{ra}_{p}$ with the quadratic form $Q^{ra}(x) = \hbox{nm}(x)$-reduced norm of $x$.  Note that the quadratic character $\chi'_p := \chi_{V^{ra}, p}$ associated to $V^{ra}_{p}$ is given by
	\begin{equation}
	\chi_{p}'(x) = (x, -1)_{p}.
	\end{equation}
	
	Let $L^{ra}_{p} = \calO_{p} \cap V^{ra}_{p}$, where $\calO_{p}$ is the maximal orde                                                                                                                  r; finally, we define the local section
	\begin{equation}
	\Phi^{ra}_{p}(s) \in I_{2,p}(s, \chi')
	\end{equation} to be the standard section attached to $(L^{ra}_{p})^{\oplus 2}$.
	The main result of this section is:
	\begin{proposition} \label{prop:local intersection number}
		Suppose  $y_1, y_2 \in \End(\bbX)^{tr=0}$ are linearly  independent vectors, and let $\delta_i = y_i \circ \varphi^{-1}$ and
		\begin{equation}
		T :=  T(\delta_1, \delta_2) = N^{-1} T(y_1, y_2)
		\end{equation}
		as in \eqref{eq:T}. Then
		\begin{equation} \label{eq:local}
		\deg \, Z(y)  \cdot \log p = \nu_p(T) \log p =  2  c_p \left( \frac{ \gamma_p(V^{ra}_p)}{ \gamma_p(\calV)} \right)^2  \frac{W'_{T,p}(e, 0, \Phi^{\calL}_{p})}{W_{NT, p}(e, 0, \Phi^{ra}_{p})}
		\end{equation}
		where
		\begin{equation}
		c_p =  \left( \frac{1}{p-1} \right) \times
		\begin{cases}
		p+1, & \text{ if }  p | N \\
		1,  & \text{ if }p \nmid N . \\
		\end{cases}
		\end{equation}
		and $\gamma_p(V_p^{ra})$ and $\gamma_p(\calV)$  are the local Weil indices, cf.\ e.g. \cite[\S 8.5.3]{KRYbook}.

	\end{proposition}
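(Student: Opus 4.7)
The first equality, $\deg Z(y) \cdot \log p = \nu_p(T)\log p$, is just \Cref{prop:local deg}, so the real content is the second. My plan is to compute each of the two local Whittaker functionals in closed form and then match the resulting quotient against the Gross-Keating formula. For the denominator, $\Phi^{ra}_p$ is built from the Rallis map applied to the characteristic function of $(L^{ra}_p)^{\oplus 2}$ in the positive definite quadratic space $V^{ra}_p$. Unfolding $W_{NT,p}(e,0,\Phi^{ra}_p)$ as a Fourier integral in the coherent positive-definite setting yields, up to an explicit Weil-index factor, the local representation density $\alpha_p(NT, L^{ra}_p)$, which admits a classical closed form since the quaternionic lattice $L^{ra}_p$ is anisotropic.

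For the numerator, note that $p \in \mathrm{Diff}(T,\calV)$ by \Cref{lemma:support of Z(T)}, so $\calV_p$ does not represent $T$ and $W_{T,p}(e, 0, \Phi^{\calL}_p) = 0$; thus only the derivative survives. I would compute $W'_{T,p}(e, 0, \Phi^{\calL}_p)$ either by applying the local intertwining operator on $I_{2,p}(s,\chi_{\calV,p})$ to pass from the incoherent space $\calV_p$ to its coherent companion $V^{ra}_p$ --- this produces precisely the Weil-index ratio $(\gamma_p(V^{ra}_p)/\gamma_p(\calV))^2$ together with a factor of $\log p$ coming from the simple zero --- or by applying the explicit derivative formulas of the third named author directly to the section attached to $\calL_p^{\oplus 2}$, following the Shimura-curve template of \cite{KRYbook}. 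Either route expresses $W'_{T,p}(e, 0, \Phi^{\calL}_p)$ as a closed form in $p$ and the Gross-Keating invariants $(a_1,a_2,a_3)$ of $\mathrm{diag}(N,T)$, after which matching the resulting quotient with the formulas in \Cref{prop:local deg} is a combinatorial verification involving finite geometric sums, with the constant $c_p$ absorbing the measure normalizations and the size of the metaplectic cover.

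The principal difficulty is the computation above at primes $p \mid N$, where $\calL_p$ is non-maximal in $\calV_p$ and the off-the-shelf derivative formulas do not directly apply. I would handle this by decomposing $\calL_p^{\oplus 2}$ relative to a natural maximal lattice in $\calV_p$ (for instance via the embedding $\calL_p \hookrightarrow M_2(\bbZ_p)^{tr=0}$ or an Atkin--Lehner twist at $p$), computing each piece via the maximal-lattice formulas, and reassembling. Tracking the bookkeeping precisely is what should produce the extra factor $p+1$ in $c_p = (p+1)/(p-1)$, and this is the genuinely new ingredient beyond the Shimura-curve case handled in \cite{KRYbook}.
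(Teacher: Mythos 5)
Your plan, in its second branch, is essentially the paper's: for both $p\nmid N$ and $p\mid N$ one converts both Whittaker functionals into local representation densities via the standard relation $W_{T,\ell}(e,s,\Phi^L_\ell)=\gamma(V_\ell)^2\,[L^\vee:L]^{-1}\,|2|^{1/2}_\ell\,\alpha_\ell(X,T,L)|_{X=p^{-s}}$, reduces $\alpha_p(X,T,\calL_p)$ to densities for a standard maximal lattice $L_0=M_2(\bbZ_p)^{tr=0}$ (this is \Cref{lem:rep dens relation} when $p\mid N$; when $p\nmid N$ one just observes $\calL_p\cong L_0$ with rescaled form), and then matches the resulting closed-form derivative against the Gross--Keating polynomial using Yang's explicit formulas.

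Two of your glosses are off, though, and would hinder carrying the plan through. First, the Weil-index ratio $(\gamma_p(V^{ra}_p)/\gamma_p(\calV))^2$ does not come from the local intertwining operator nor from ``the simple zero''; it appears simply because the rep-density/Whittaker relation above has a $\gamma(V)^2$ prefactor, and you are dividing the functional for the incoherent space $\calV_p$ (numerator) by the one for $V^{ra}_p$ (denominator), so the two Weil-index factors do not cancel. The intertwining-operator route you mention as an alternative is not what is used and would need substantial extra work (the relevant functional equations of degenerate Whittaker functions are more delicate than a single residue computation). Second, $c_p$ has nothing to do with ``the size of the metaplectic cover''; tracing the constants at $p\mid N$, the $p^{-1}$ vs $p^{-2}$ from $[\calL_p^\vee:\calL_p]$ vs $[(L^{ra}_p)^\vee:L^{ra}_p]$, the factor $2(p+1)$ from $\alpha_p(1,NT,L^{ra}_p)$, and the factor $(p-1)/p$ from $\alpha_p'(1,T,\calL_p)=\tfrac{1-p}{p}\nu_p(T)$ combine to force exactly $c_p=(p+1)/(p-1)$ (and $c_p=1/(p-1)$ when $p\nmid N$). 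Everything here is a clean bookkeeping of Yang's formulas; no measure on a cover enters.
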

	
	\begin{remark}
		 	If $p$ is odd, it follows from \cite[Appendix A]{Rao} that	$\gamma_p(V_p^{ra})^2 = 1$ and $\gamma_p(\calV_p)^2 = (-1,p)_p$.

	\end{remark}

	\begin{proof}[Proof for $p \nmid N$]
		First, we observe the following general fact: for a lattice $L$ over $\mathbb Z_{\ell}$, let $\Phi_{\ell}^L \in I_{\ell}(s,\chi_{L})$ denote the section corresponding to (the characteristic function of) the lattice $L^{\oplus 2}$. Then, for $T \in \Sym_{2}(\bbZ_{\ell})$, we have
		\begin{equation} \label{eqn:Whittaker rep dens relation general}
		W_{T,\ell}(e, s, \Phi^L_{\ell}) = \gamma(V_{\ell})^2 \cdot [L^{\vee} : L]^{-1} \cdot |2|_{\ell}^{\frac12} \cdot \alpha_{\ell}(X, T, L)|_{X = p^{-s}}
		\end{equation}
		where $V =L_{\bbQ_{\ell}}$, and $L^{\vee}  $ is the dual lattice, cf.\ \cite[Lemma 5.7.1]{KRYbook}. Here $\alpha_{\ell}(L, T, X)$ is the representation density polynomial, as in \cite{YangLocal,YangLocal2}.
		
		Now we return our situation, and consider the case $p \nmid N$. In this case, the result is contained in \cite{YangLocal2}. Indeed,  note that if $0 \leq a_1 \leq a_2 \leq a_3$ are the Gross-Keating invariants of $\mathrm{diag}(N, NT)$, which are also the Gross-Keating invariants of $\mathrm{diag}(1, T)$,  then $a_1 = 0$, cf.\ \cite[Appendix B]{YangLocal2}.
		
		Since $N $ is a p-adic unit, carefully tracing through the contructions in \cite{YangLocal2} shows that
			\begin{equation}
				\alpha_{p}(X, T, \calL) = \alpha_p(X, NT, L^0_p)
			\end{equation}
		where
		$$
		L^0_p=\left\{ \begin{pmatrix}  {a}&{b} \\ {c} &{-a} \end{pmatrix}:\, a, b, c \in \Z_p\right\}, \qquad Q(x ) = \det(x).
		$$

		Thus,  \cite[Proposition 5.7]{YangLocal2} gives
		\begin{align}
		\alpha_p'(1, T, \calL_p)|_{X=1} &= -(1-p^{-2})
		\begin{cases} \sum_{0\le i \le \frac{a_2-1}2} (a_2+a_3-4i)p^i,  &\text{if }  a_2 \equiv 0 \pmod 2, \\
		\sum_{0\le i \le \frac{a_2}2-1} (a_2+a_3-4i)p^i - \frac{a_2-a_3+1}2p^{\frac{a_2}2}, &\text{if } a_2 \equiv 1 \pmod 2
		\end{cases}  \notag \\
		&= - (1-p^{-2}) \nu_p(T).
		\end{align}

		On the other hand, \cite[Proposition 5.7]{YangLocal2} gives
		\begin{equation} \label{eq:ram}
		\alpha_p(1, NT, L_p^{ra}) = 2 (p+1);
		\end{equation}
		combining these propositions with \eqref{eqn:Whittaker rep dens relation general} yields the proposition for $p \nmid N$.
		The case $p|N$ will be dealt at the end of this section after some preparation.
	\end{proof}

	In the remainder of the section, we suppose $p | N$; in particular, $p$ is odd. We have
	\begin{equation}
	\calL_p =  \left\{ \begin{pmatrix} a & p^{-1 } b \\ c & -a \end{pmatrix} \ \Big| \ a,b,c \in \bbZ_p \right\}, \qquad Q(x) = N \det(x).
	\end{equation}
	The Gram matrix of $\calL_p$, with respect to the  $\bbZ_p$-basis $\{\psm{& N^{-1} \\ -1 &}, \psm{& N^{-1} \\ 1 &},  \psm{1 & \\ & -1}  \}$ is
	\begin{equation}
	\calS \ := \ \begin{pmatrix}1 & & \\ & -1 & \\ & &  -N \end{pmatrix}
	\end{equation}
	In particular, $[\calL^{\vee}:\calL]^{-1}= p^{-1}$ and hence
	\begin{equation}
	W_{T,p}(e, s, \Phi^{\calL}_p) = \gamma(\calV_p)^2 \cdot p^{-1} \alpha_{\ell}(X, T, \calL)\big|_{X =p^{-s}}.
	\end{equation}
	
	Our first step is an  formula for the representation density $\alpha_p(X, T, \calL)$, using the explicit formulas in \cite{YangLocal}. Recall that for a general $T \in \Sym_2(\bbZ_p)$ and lattice $L$, Yang decomposes the representation density as
	\begin{equation}
	\alpha_p(X, T, L) = 1 + R_1(X, T, L) + R_2(X, T, L)
	\end{equation}
	for some explicit polynomials $R_1(X, T, L)$ and $R_2(X, T, L)$, defined in \cite[\S 7]{YangLocal}.
	
	In our case, we compute $\alpha_p(X, T, \calL)$ via comparison to the representation density $\alpha_p(X, T, L_0)$, where $L_0 = M_2(\Z_p)^{\mathrm{tr} =0}$ with quadratic form
	$Q(x) =\det x$.
	
	\begin{lemma} \label{lem:rep dens relation}
		Suppose $p|N$ and let  $T \in \Sym_2(\bbZ_p)$. Then
		\begin{equation}
		\alpha_p(X, T, \calL) = X^{-2} \left( p \alpha(X, NT, L_0) \ + (X-p) \left( R_1(X, NT, L_0) + \frac{X+p}{p} \right) \right),
		\end{equation}
		where $R_1(X, NT, L_0)$ is the polynomial defined in \cite[Theorem 7.1]{YangLocal}
		
		\begin{proof}[Sketch of proof] In  \cite[\S 7]{YangLocal}, the representation density is expressed in terms of  polynomials $R_1(X, T, L)$ and $R_2(X, T, L)$, which are further decomposed in terms of explicit polynomials $I_{i,j}(X, T, L)$ with $i = 1, 2$ and $j = 1, \dots, 8$.  Unwinding the definitions of these polynomials, one can verify explicitly that
			\begin{align*}
			I_{1,j}(X,T, \calS) & = X^{-1}  I_{1,j}(X, NT, S_0) \\
			I_{2,j}(X, T, \calS) &= p X^{-2} I_{2,j}(X, NT, S_0), \qquad j = 1, \dots 7 \\
			I_{2,8}(X, T, \calS) &= p X^{-2} I_{2,8}(X, NT, S_0) - 1.
			\end{align*}
			The lemma then  follows from \cite[Theorem 7.1]{YangLocal}.
			
		\end{proof}

	\end{lemma}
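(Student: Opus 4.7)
The plan is to invoke Yang's explicit formulas for the local representation density from \cite[\S 7]{YangLocal} and exploit the fact that $\calL_p$ and $L_0$ differ only by a single $p$-rescaling in one coordinate. Since $p \| N$ ($N$ is odd, squarefree, and $p | N$), the Gram matrix of $\calL_p$ with respect to the natural ternary basis is $\calS = \mathrm{diag}(1,-1,-N)$, while that of $L_0$ is $S_0 = \mathrm{diag}(1,-1,-1)$. The third diagonal entries differ by the product of the $p$-adic unit $N/p$ and a single factor of $p$. Thus representations of $T$ by $\calL_p$ should correspond, up to this scaling, to representations of $NT$ by $L_0$ satisfying a congruence in the third coordinate --- the data that the variable $X$ is tracking in the representation density polynomial.

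To exploit this systematically, I would invoke the decomposition
\[
\alpha_p(X, T, L) = 1 + R_1(X, T, L) + R_2(X, T, L)
\]
from \cite[\S 7]{YangLocal}, in which $R_1$ and $R_2$ are further split into atomic polynomials $I_{i,j}(X, T, L)$ for $i=1,2$ and $j = 1, \ldots, 8$. The core technical step is to verify the three comparison identities
\begin{align*}
I_{1,j}(X, T, \calS) &= X^{-1} I_{1,j}(X, NT, S_0), \\
I_{2,j}(X, T, \calS) &= p X^{-2} I_{2,j}(X, NT, S_0), \qquad j = 1, \ldots, 7, \\
I_{2,8}(X, T, \calS) &= p X^{-2} I_{2,8}(X, NT, S_0) - 1
\end{align*}
directly from Yang's explicit definitions: the $I_{1,j}$ are built from single-index sums and so acquire one factor of $X^{-1}$ under the rescaling; the $I_{2,j}$ for $j \le 7$ are double sums and acquire $pX^{-2}$; and the $-1$ in $I_{2,8}$ arises from a boundary normalization term which does not rescale uniformly with the bulk of the sum.

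Summing these identities to reassemble $R_1(X, T, \calL_p)$ and $R_2(X, T, \calL_p)$, then substituting into $\alpha_p(X, T, \calL) = 1 + R_1 + R_2$, one should recover the stated identity after elementary rearrangement; in particular, the $-1$ in the $I_{2,8}$ relation, combined with the constant $1$ in the decomposition, is what produces the $(X-p)\,\tfrac{X+p}{p}$ correction to $p\,\alpha_p(X, NT, L_0)$ in the final formula. I expect the main obstacle to be the case-by-case verification of the eight comparison identities for each $R_i$, and in particular tracking the origin of the anomalous constant in $I_{2,8}$, which requires delicate bookkeeping through several layers of Yang's definitions. Once these atomic relations are in hand, the conclusion follows from \cite[Theorem 7.1]{YangLocal} by straightforward algebra.
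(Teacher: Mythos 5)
Your proposal reproduces the paper's proof essentially verbatim: you invoke the same decomposition $\alpha_p = 1 + R_1 + R_2$ from \cite[\S 7]{YangLocal}, state the identical three families of comparison identities for the atomic polynomials $I_{i,j}(X,T,\calS)$ versus $I_{i,j}(X,NT,S_0)$, and conclude by the same appeal to \cite[Theorem 7.1]{YangLocal}. The only additions are heuristic remarks about why the single- and double-index sums pick up factors of $X^{-1}$ and $pX^{-2}$, which are reasonable but (like the paper's own sketch) leave the final reassembly as an unverified computation.
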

	
	Explicit formulas for $\alpha(X, NT, L_0)$ and $R_1(X, NT, L_0)$ are as follows:
	\begin{proposition}[{\cite{Kitaoka}, \cite[\S 8]{YangLocal}}]   \label{prop:rep dens split}
		Suppose $p|N$ and $T$ is $\GL_2(\bbZ_p)$-equivalent to $\psm{ \epsilon_1 p^{a} & \\ & \epsilon_2 p^b}$. Let $M=N/p \in \bbZ_p^{\times}$ and define
		\begin{equation*}
		v_0^+ = (-M \epsilon_1, p)_p  \qquad
		v_1^+ = \begin{cases}
		(-M \epsilon_2, p)_p, &  \text{if } b \text{ is odd} \\
		(-\epsilon_1 \epsilon_2, p)_p, & \text{if } b \text{ is even}.
		\end{cases}
		\end{equation*}
		\begin{enumerate}[(i)]
			\item If $a$ is odd, then
			$$
			\alpha_p(X, NT, L_0) = (1 - p^{-2}X^2) \left\{ \sum_{0 \leq k < \frac{a+1}{2}}  p^k (X^{2k} + (v_0^+ X)^{a + b+2 - 2k}) + p^{\frac{a+1}{2}} \sum_{a+1 \leq k \leq b+1}(v_0^+ X)^k\right\}
			$$
			and
			\begin{equation*}
			R_1(X, NT, L_0) = (1-p^{-2}) \sum_{0<k\leq\frac{a+1}{2}} p^k X^{2k} + p^{\frac{a+1}{2}} (1- p^{-1} v_0^+ X) \sum_{a+1<k\leq b+1} (v_0^+X)^k.
			\end{equation*}
			\item If $a$ is even, then
			\begin{equation*}
			\alpha_p(X, NT, L_0) = (1-p^{-2}X^2)\left\{ \sum_{0 \leq 0 \leq a/2} p^k(X^{2k} + v_1^+ X^{a+b+2-2k}) \right\}
			\end{equation*}
			and
			\begin{equation*}
			R_1(X,T, L_0) = -1 + (1 - p^{-1}X^2) \sum_{0 \leq k \leq a/2} p^k X^{2k} + v_1 p^{a/2} X^{b+2}.
			\end{equation*}	
		\end{enumerate}
		\qed
	\end{proposition}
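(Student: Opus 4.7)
The plan is to derive both explicit formulas as specializations of classical results on $p$-adic representation densities for the split ternary form. First I fix the Jordan decomposition of $L_0 = M_2(\Z_p)^{\mathrm{tr}=0}$ with $Q(x) = \det x$: in the basis $e_1 = \psm{0 & 1 \\ 0 & 0}$, $e_2 = \psm{0 & 0 \\ 1 & 0}$, $e_3 = \psm{1 & 0 \\ 0 & -1}$, the Gram matrix is $\mathrm{diag}(H, -1)$, with $H$ the hyperbolic plane; so $L_0$ is split ternary with unit discriminant and trivial Hasse invariant. Since $M = N/p \in \Z_p^{\times}$ is a unit, the matrix $NT$ is $\GL_2(\Z_p)$-equivalent to $\mathrm{diag}(M\epsilon_1 p^{a+1}, M\epsilon_2 p^{b+1})$, so its Jordan type is determined by the pair $(a+1, b+1)$ together with the sign data $(\epsilon_1, \epsilon_2)$.

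For the first formula, I would apply Kitaoka's explicit formula \cite{Kitaoka} for the representation density $\alpha_p(X, T', L_0)$ of a binary form $T'$ by the split ternary form $L_0$. This formula is stated in terms of the Jordan invariants of $T'$, and splits into two cases according to the parity of the smaller valuation, which in our setting corresponds to the parity of $a+1$, equivalently of $a$. The Hilbert symbols $v_0^+$ and $v_1^+$ arise naturally as the Hasse invariants of the Jordan blocks: when $a$ is odd, $a+1$ is even and the first block's discriminant produces $v_0^+ = (-M\epsilon_1, p)_p$; when $a$ is even, only the total discriminant of $NT$ is well-defined modulo squares, yielding $v_1^+$ with its sub-case structure depending on the parity of $b$.

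The formula for $R_1(X, NT, L_0)$ is obtained by unwinding the decomposition $\alpha_p = 1 + R_1 + R_2$ of \cite[\S 7]{YangLocal}, in which $R_1$ is assembled from polynomials $I_{1,j}(X, T, L)$ for $j = 1, \dots, 8$; for the split lattice $L_0$ these reduce to finite geometric sums indexed by the intermediate scales in the Jordan filtration of $NT$. Summing them produces the stated closed form, with the boundary factor $(1 - p^{-1} v_0^+ X)$ in the odd case reflecting the endpoint contribution at $k = (a+1)/2$. The principal obstacle throughout is the bookkeeping of Hilbert symbol factors across the parity subcases on $a$ and $b$; once $v_0^+, v_1^+$ are correctly identified with the invariants in Kitaoka's tables, the remaining manipulations are straightforward summations of geometric series, with repeated use of $(x, p)_p (y, p)_p = (xy, p)_p$ to consolidate the output.
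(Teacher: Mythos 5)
Your proposal is correct and is essentially the paper's own route: the paper states these formulas without proof, citing Kitaoka and \cite[\S 8]{YangLocal}, and your plan of identifying $L_0 \cong H \perp \langle -1\rangle$, substituting the Jordan data $(a+1,b+1;M\epsilon_1,M\epsilon_2)$ of $NT$ into the known explicit density formulas, and reading off $R_1$ from Yang's decomposition $\alpha_p = 1 + R_1 + R_2$ is precisely that specialization. The only work is the bookkeeping you describe (the parity shift $a \mapsto a+1$ and consolidating Hilbert symbols such as $(-(M\epsilon_1)(M\epsilon_2),p)_p = (-\epsilon_1\epsilon_2,p)_p$), which matches the invariants $v_0^+$, $v_1^+$ in the statement.
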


	\begin{proposition} \label{prop:whittaker local degree identity bad primes}
		Suppose that  $p|N$ and $p \in \mathrm{Diff}(T, \calV)$, i.e.\ $T$ is not represented by $\calV_p$. Then
		\begin{equation*}
		\frac{p}{1-p} \cdot 	\alpha_p'(1, T, \calL) = \nu_p(T).	
		\end{equation*}
		Here $\nu_p(T) = \deg(Z(y))$ is the intersection multiplicity given explicitly in \Cref{prop:local deg}, for any tuple $y = (y_1, y_2)$ with $T(y_1, y_2) = NT$.
		\begin{proof}
			This follows from a straightforward, though  tedious, computation using \Cref{lem:rep dens relation} and \Cref{prop:rep dens split}.
		\end{proof}
	\end{proposition}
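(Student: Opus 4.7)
The plan is to proceed by a direct calculation, splitting into the parity cases $a$ even/odd already encoded in \Cref{prop:rep dens split}. Write $N = p M$ with $M\in\bbZ_p^\times$, and by a $\GL_2(\bbZ_p)$-change of basis (which preserves both sides of the desired identity) assume $T = \mathrm{diag}(\epsilon_1 p^a, \epsilon_2 p^b)$ with $0 \le a \le b$ and $\epsilon_i \in \bbZ_p^\times$. The condition $p \in \mathrm{Diff}(T, \calV)$ then pins down a sign: unwinding the definition via the Hasse criterion \eqref{eqn:Hasse criterion} and the identity $\det(\calV_p) \equiv -N \pmod{\bbQ_p^{\times 2}}$ forces $v_0^+ = -1$ when $a$ is odd, and $v_1^+ = -1$ when $a$ is even. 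This is exactly the regime in which $\alpha_p(1, NT, L_0)$ vanishes, as one sees by direct inspection of the formulas in \Cref{prop:rep dens split}.

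Next, I would plug these formulas into \Cref{lem:rep dens relation}. Since $\alpha_p(1, NT, L_0) = 0$ in the relevant cases, evaluating at $X = 1$ gives
\begin{equation*}
\alpha_p(1, T, \calL) = (1 - p)\Bigl( R_1(1, NT, L_0) + \tfrac{1+p}{p} \Bigr),
\end{equation*}
and one checks from the explicit formulas for $R_1$ that this sum also vanishes under the Diff-condition, consistent with the fact that $\calV_p$ does not represent $T$. Differentiating the product in \Cref{lem:rep dens relation} at $X = 1$, the terms involving $R_1$ and the derivative of the prefactor $X^{-2}(X-p)$ collapse, and one is left with
\begin{equation*}
\alpha_p'(1, T, \calL) = p \cdot \alpha_p'(1, NT, L_0) + (\text{bookkeeping terms from } R_1 \text{ and } (X-p)),
\end{equation*}
which after simplification yields a closed-form expression that depends only on $a,b$ (the dependence on the ``bad'' signs having already been fixed).

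On the Gross--Keating side, the invariants $(a_1, a_2, a_3)$ of $\mathrm{diag}(N, NT) = \mathrm{diag}(pM, \epsilon_1 p^{a+1}, \epsilon_2 p^{b+1})$ are simply the sorted list of valuations $(1, a+1, b+1)$ — this uses $p \nmid M$ and the fact that the characters $v_0^+, v_1^+$ being $-1$ forces the matrix to be diagonalizable with these valuations, without the usual ambiguities in the unramified case. Substituting $(a_1, a_2, a_3) = (1, a+1, b+1)$ into the two explicit formulas in \Cref{prop:local deg} (selected by whether $a$ is odd or even, i.e.\ $a_1 \equiv a_2 \pmod 2$ or not) produces two polynomial-in-$p$ expressions for $\nu_p(T)$.

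Finally I would verify, case by case, that $\tfrac{p}{1-p} \alpha_p'(1, T, \calL)$ matches $\nu_p(T)$. The main obstacle is purely combinatorial: there are four sub-cases to handle (parities of $a$ and $b$), each producing geometric sums in $p$ with several boundary terms, and one must carefully align the summation ranges from \Cref{prop:local deg} with those coming out of $\alpha_p'$ and $R_1'$. No conceptual difficulty arises beyond bookkeeping, which is why the proof is described as tedious; the check reduces to comparing coefficients of $p^i$ for each relevant $i$ in the two geometric sums, and this matching is forced by the structure of \Cref{lem:rep dens relation} once the Diff-condition has eliminated all ambiguities.
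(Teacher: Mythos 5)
Your overall route is the same as the paper's: use \Cref{lem:rep dens relation} together with \Cref{prop:rep dens split} to compute $\alpha'_p(1,T,\calL)$, and compare with the explicit Gross--Keating formula in \Cref{prop:local deg}. The observations that $\alpha_p(1,NT,L_0)=0$, that $R_1(1,NT,L_0)+\tfrac{1+p}{p}=0$, and that the derivative therefore reduces to $g'(1)$ (so the $X^{-2}$ prefactor is harmless), are all correct and are the right way to organize the computation.

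However, there is a genuine error on the Gross--Keating side. In \Cref{prop:local deg}, the invariants $(a_1,a_2,a_3)$ are those of $\mathrm{diag}(N,T')$ where $T'=T(\delta_1,\delta_2)$; and in \Cref{prop:whittaker local degree identity bad primes} the tuple $y$ satisfies $T(y_1,y_2)=NT$, so $T'=T(\delta_1,\delta_2)=N^{-1}T(y_1,y_2)=T$. Thus the relevant matrix is $\mathrm{diag}(N,T)\sim\mathrm{diag}(pM,\epsilon_1p^a,\epsilon_2p^b)$, whose sorted valuations are $\{1,a,b\}$ --- not $\mathrm{diag}(N,NT)$ with invariants $(1,a+1,b+1)$ as you wrote. (The appearance of $\mathrm{diag}(N,NT)$ in the proof of \Cref{prop:local intersection number} is for the unramified case $p\nmid N$, where scaling by the $p$-adic unit $N$ does not change the invariants; for $p\mid N$ the two matrices give genuinely different answers.) This shift also flips the parity dictionary: with $(a_1,a_2)=(1,a)$ (for $a\ge 1$) one has $a_1\equiv a_2\pmod 2$ precisely when $a$ is \emph{odd}, the opposite of what you claim.

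A concrete check shows the discrepancy. Take $T=\mathrm{diag}(1,1)$, so $a=b=0$, and $p\mid N$ with $p\equiv 3\pmod 4$ (so $v_1^+=(-1,p)_p=-1$ and $p\in\mathrm{Diff}(T,\calV)$). Then \Cref{prop:rep dens split}(ii) gives $\alpha_p(X,NT,L_0)=(1-p^{-2}X^2)(1-X^2)$ and $R_1(X,NT,L_0)=-(1+p^{-1})X^2$, and a short computation with \Cref{lem:rep dens relation} yields $\alpha'_p(1,T,\calL)=\tfrac{1-p}{p}$, hence $\tfrac{p}{1-p}\alpha'_p(1,T,\calL)=1$. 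On the other hand, $\mathrm{diag}(N,T)$ has sorted invariants $(0,0,1)$ and \Cref{prop:local deg} gives $\nu_p=1$, matching; while your invariants $(1,1,1)$ from $\mathrm{diag}(N,NT)$ would give $\nu_p=3+p$, which fails. You need to redo the final comparison with $(a_1,a_2,a_3)$ the sorted list of $\{1,a,b\}$ (and handle the case $a=0$ separately, since then $a_1=0$).
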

	
	We can now conclude the proof of \Cref{prop:local intersection number}:
	
	\begin{proof}[Proof of \Cref{prop:local intersection number} for $p|N$:]
		Using \eqref{eqn:Whittaker rep dens relation general} and \cite[Proposition 8.7]{YangLocal}, one has the formula
		\begin{equation}
		W_{NT,p}(e, 0, \Phi_p^{ra}) = \gamma_p(V_p^{ra})^2 \cdot p^{-2} \cdot 2(p+1) =2  p^{-2} (p+1).
		\end{equation}
		On the other hand, combining \eqref{eqn:Whittaker rep dens relation general} and \Cref{prop:whittaker local degree identity bad primes}, we have
		\begin{equation}
		W'_{T, p}(e, s, \Phi^{\calL}_p) = \gamma(\calV_p)^2 \cdot p^{-1} \cdot \alpha'(1, T, \calL) \cdot (-\log p) =   \gamma_p(\calV_p)^2 \left( \frac{p-1}{p^2}  \right) \nu_p(T) \cdot \log p.
		\end{equation}
		From this, the proposition follows.
	\end{proof}

	\section{Proof of the main theorem} \label{sec:proof of main thm}
	\subsection{Positive definite $T$}
	
	Suppose $T$ is positive definite.	Our strategy in this case closely mirrors that of \cite{KRYbook}. We begin by recalling the following well-known facts about the Fourier coefficients of Siegel Eisenstein series, see e.g.\ \cite[\S 1]{KudlaCentralDerivs}:
	\begin{proposition} \label{prop:Whittaker dichotomy}
		Suppose $T \in \Sym_r(\bbQ)$ is non-degenerate, that $V$ is a quadratic space over $\bbQ$ of dimension $r+1$, and $\Phi = \otimes_v \Phi_v \in I_r(s, \chi_{V})$ is factorizable section. Then:
		\begin{enumerate}[(i)]
			\item $E_T(g, s, \Phi) = \prod_{v \leq \infty} W_{T,v}(g_v, s, \Phi_v)$.
			\item Suppose $\Phi_v$ is in the image of the Rallis map and that $V_v$ does not represent $T$ at some place $v$. Then $	W_{T,v}(e, 0, \Phi_v) = 0.$
		\end{enumerate}
		\qed
	\end{proposition}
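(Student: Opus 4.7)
The proposition has two parts of quite different character; I would treat them in sequence.

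For part (i), the plan is a standard unfolding of the Eisenstein series. The $T$-th Fourier coefficient is
\begin{equation}
E_T(g,s,\Phi) \ = \ \int_{N_r(\bbQ) \backslash N_r(\bbA)} E(n(b) g, s, \Phi) \, \psi(-\mathrm{tr}(Tb)) \, db.
\end{equation}
Interchange sum and integral (justified for $\mathrm{Re}(s)$ large, then continued meromorphically), and use the Bruhat decomposition $G_{r,\bbQ} = \coprod_{j} P_{r,\bbQ} w_j P_{r,\bbQ}$, where $w_j$ runs over representatives indexed by rank. For each cell, collapse the resulting sum over $P_{r,\bbQ} w_j / (\text{stabilizer})$ with the integration over $N_r(\bbQ)\backslash N_r(\bbA)$ to obtain an integral over a subgroup of $N_r(\bbA)$. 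Because $T$ is non-degenerate, the character $\psi(-\mathrm{tr}(T \cdot))$ is non-trivial on every proper Bruhat cell, so all cells except the big one integrate to zero. The big-cell contribution unfolds to
\begin{equation}
E_T(g,s,\Phi) \ = \ \int_{N_r(\bbA)} \Phi(w^{-1} n(b) g, s) \, \psi(-\mathrm{tr}(Tb)) \, db,
\end{equation}
which is the global Whittaker integral. Factorizability of the measure $db = \prod_v db_v$ together with factorizability of $\Phi$ yields the Euler product in (i). This is the classical argument (see Langlands, and \cite[\S II.1]{KRYbook}).

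For part (ii), the key point is that $\dim V = r+1$ forces $s_0 = (m-r-1)/2 = 0$, so at $s = 0$ the section $\Phi_v$ is exactly the image of a Schwartz function $\varphi_v \in S(V_v^r)$ under the Rallis map: $\Phi_v(g,0) = (\omega_{V,v}(g)\varphi_v)(0)$. Substituting into the local Whittaker integral gives
\begin{equation}
W_{T,v}(e, 0, \Phi_v) \ = \ \int_{\Sym_r(\bbQ_v)} \left( \omega_{V,v}(w^{-1} n(b)) \varphi_v \right)(0) \, \psi_v(-\mathrm{tr}(Tb)) \, db.
\end{equation}
Now apply the standard Weil representation formulas: $\omega_{V,v}(n(b))\varphi(x) = \psi_v(\mathrm{tr}(b \, Q_\varphi(x)))\, \varphi(x)$ where $Q_\varphi(x) = \tfrac12(\langle x_i, x_j\rangle)_{ij}$ is the moment matrix of $x = (x_1,\dots,x_r) \in V_v^r$, while $\omega_{V,v}(w^{-1})$ acts (up to the Weil constant) by Fourier transform. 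Evaluating at $0$, one obtains
\begin{equation}
\omega_{V,v}(w^{-1} n(b)) \varphi_v (0) \ = \ \gamma_v^{-1} \int_{V_v^r} \psi_v(\mathrm{tr}(b \, Q_\varphi(x))) \, \varphi_v(x) \, dx.
\end{equation}

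Substituting back and swapping the order of integration (justified by appealing to a neighborhood of $s_0$ where the Whittaker integral converges absolutely and then specializing), the inner integration in $b$ produces the delta distribution supported on the locus $\{x \in V_v^r : Q_\varphi(x) = T\}$:
\begin{equation}
W_{T,v}(e, 0, \Phi_v) \ = \ \gamma_v^{-1} \int_{\{x \in V_v^r \,:\, Q_\varphi(x) = T\}} \varphi_v(x) \, d\mu_T(x)
\end{equation}
for the appropriate orbital measure $d\mu_T$. If $V_v$ does not represent $T$, the domain of integration is empty and the integral vanishes. The main obstacle is the rigorous handling of these distributional manipulations; I would address this by first carrying out the analogous computation on a regular semisimple locus where everything is a proper integral, then either passing to a limit or citing standard references on the Rallis tower and Weil representation (e.g.\ Kudla--Rallis) that establish this orbital-integral identity directly. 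Either route yields the vanishing statement in (ii).
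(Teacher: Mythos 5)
The paper does not prove this proposition at all: it is recorded as a known fact with a pointer to \cite[\S 1]{KudlaCentralDerivs}, and the only ingredient of it that the paper actually uses later is the identification of the central value of the local Whittaker functional with a local representation density (the relation \eqref{eqn:Whittaker rep dens relation general}). Your sketch reconstructs the standard argument behind that citation, and it is essentially correct: part (i) is the usual Bruhat-cell unfolding, where the cells other than the big cell only produce Fourier coefficients $T$ of rank $<r$, so for non-degenerate $T$ only the big cell survives and the integral factors into local Whittaker integrals; part (ii) is the standard ``support of the Whittaker distribution'' phenomenon for sections in the image of the Rallis map at $s_0=(m-r-1)/2=0$.

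Two caveats on part (ii), neither fatal but worth tightening. First, your justification ``a neighborhood of $s_0$ where the Whittaker integral converges absolutely'' is not available in the relevant case $r=2$, $m=3$: absolute convergence holds only for $\mathrm{Re}(s)>\tfrac{r+1}{2}$, and the value at $s=0$ is defined by analytic continuation (for non-degenerate $T$ the finite-place Whittaker functional is a polynomial in $q^{\mp s}$, and the archimedean one is entire), so the interchange of integrals must be carried out in the convergent range and then continued, or replaced by a different argument. Second, the delta-function manipulation is cleaner and fully rigorous if you phrase it equivariantly: for fixed non-degenerate $T$ the functional $\varphi_v\mapsto W_{T,v}(e,s_0,\lambda(\varphi_v))$ is a distribution on $S(V_v^r)$ satisfying $D(\omega(n(b))\varphi)=\psi_v(\mathrm{tr}(Tb))D(\varphi)$, and since $\omega(n(b))\varphi(x)=\psi_v(\mathrm{tr}(b\,Q(x)))\varphi(x)$ this forces $\mathrm{supp}(D)\subset\{x\in V_v^r: Q(x)=T\}$, which is empty when $V_v$ does not represent $T$; alternatively, at finite places one can simply quote the identity with representation densities (exactly \eqref{eqn:Whittaker rep dens relation general} in the paper) and at the archimedean place the explicit confluent hypergeometric computations. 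With either repair your route gives the vanishing statement, and it is the same proof that the cited references supply.
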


	For our purposes, we will also require certain auxiliary sections. Fix a prime $p$, let $B^{(p)}$ denote the rational quaternion algebra ramified exactly at $p$ and $\infty$, and let $V^{(p)} \subset B^{(p)}$ denote the subset of traceless elements. We equip $V^{(p)}$ with the quadratic form $Q(x) = \mathrm{nm}(x)$, the reduced norm. We fix an order $\calO^{(p)} \subset \calB^{(p)}$ as follows: if $p \nmid N$, then we take $\calO^{(p)}$ to be an Eichler order of level $N$. If $p | N$, we take $\calO^{(p)}$ to be an Eichler order of level $N/p$. In any case, let
	\begin{equation} \label{eqn:Lp lattice def}
	L^{(p)} := \calO^{(p)} \cap V^{(p)}
	\end{equation}
	denote the set of trace-zero elements.
	
	Finally, we set
	\begin{equation} \label{eq:Neighbor}
	\Phi^{(p)}(s) \in I_2(s,\chi')
	\end{equation}
	to be the global standard section associated to  $(L^{(p)})^{\oplus 2}$; here $\chi' = \chi_{V^{(p)}}$ is the character $\chi'(x) = (-1, x)_{\bbA}$.

	\begin{lemma} \label{lem:Whittaker values shift}
		Suppose $T \in \Sym_2(\bbZ)^{\vee}$, fix a prime $p$ as above,  and let $q \neq p$.
		\begin{enumerate}[(i)]
			\item 	If $q \nmid N$, then
			\begin{equation*}
			W_{NT, q}(e, s, \Phi^{(p) }_q ) \ = \  \left( \frac{\gamma_q(V^{(p)}_q)}{\gamma_q(\calV_q)} \right)^2 W_{T, q}(e,s, \Phi^{\calL}_q).
			\end{equation*}
			\item Suppose $q | N$. Then at $s=0$, we have
		\end{enumerate}
		\begin{equation*}
		W_{NT, q}(e, 0, \Phi^{(p) }_q ) \ = \  \left( \frac{\gamma_q(V^{(p)}_q)}{\gamma_q(\calV_q)} \right)^2 W_{T, q}(e,0, \Phi^{\calL}_q).
		\end{equation*}
		\begin{proof}
			Recall that
			\begin{equation}
			W_{T,q}(e, s, \Phi^{\calL}_q) = \gamma_q(\calV_q)^2 \cdot |\det \calL_q|_q \cdot \alpha_q(X, T, \calL_q)|_{X= p^{-s}}
			\end{equation}
			and similarly for $W_{NT,q}(e, s, \Phi^{(p)}_q)$.
			
			If $q \nmid N$, then we have identifications $\calL_q \simeq (M_2(\bbZ_q)^{tr=0}, N \det(x))$, and $L^{(p)}_q \simeq (M_2(\bbZ_q)^{tr=0},  \det(x))$ as quadratic spaces. By \cite[Proposition 8.6]{YangLocal},  we have
			\begin{equation}
			\alpha_q(X, T, \calL_q) =\alpha_q(X, N^{-1} T, L^{(p)}) =\alpha_q(X, N T, L^{(p)}_q)
			\end{equation}
			from which the first part of the lemma follows.

			When $q | N$, we have that
			\begin{equation}
			L_q^{(p)} = \left\{ \begin{pmatrix} a & b \\ qc & -a \end{pmatrix} \ | \ a,b,c \in \bbZ_q  \right\}, \qquad Q(x) = \det (x).
			\end{equation}
			By \cite[Corollary 8.5]{YangLocal}, we have
			\begin{equation} \label{eq:Lp}
			\alpha(X, NT, L_q^{(p)}) = q^2 \alpha(X, NT, L_0) + (q-q^2) R_1(X, NT, L_0) + 1 - q^2,
			\end{equation}
			and so, comparing with \Cref{lem:rep dens relation}, taking $X =1 $ in both formulas gives
			\begin{equation}
			\alpha(1, NT, L_q^{(p)}) = q \alpha(1, T, \calL_q).
			\end{equation}
			Observing that $|\det \calL_q|_q = q^{-1} $ and $|\det L^{(p)}_q|_q = q^{-2}$, the second part of the  lemma follows immediately.
		\end{proof}
		
	\end{lemma}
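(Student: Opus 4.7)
The plan is to reduce both Whittaker values to representation densities, via the standard identity
\[
W_{T,q}(e,s,\Phi^L_q) \;=\; \gamma_q(V_q)^2 \cdot [L_q^\vee : L_q]^{-1} \cdot |2|_q^{1/2} \cdot \alpha_q(X,T,L_q)\big|_{X=q^{-s}},
\]
already recalled earlier in the section as \eqref{eqn:Whittaker rep dens relation general}. Applied to both $\Phi^\calL_q$ and $\Phi^{(p)}_q$, the Weil indices appear symmetrically on each side, so the lemma is equivalent to checking that the products of inverse lattice indices with representation densities agree --- either as polynomials in $X$ (when $q \nmid N$) or just at $X=1$ (when $q \mid N$).

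For $q \nmid N$, the quaternion algebra $B^{(p)}$ is split at $q$, and since the Eichler level is coprime to $q$ the trace-zero lattice $L^{(p)}_q$ is isometric to $(M_2(\bbZ_q)^{tr=0},\det)$. Then $\calL_q$ is the same underlying lattice but with quadratic form rescaled by the $q$-adic unit $N$. Rescaling both the form on $L$ and the moment matrix $T$ by the same unit is a formal symmetry of $\alpha_q$ (this is the content of \cite[Proposition 8.6]{YangLocal}), which gives $\alpha_q(X,T,\calL_q) = \alpha_q(X,NT,L^{(p)}_q)$; the discriminants of the two lattices likewise agree up to a unit, so part (i) drops out.

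For $q \mid N$ (with $q \neq p$), $\calL_q$ and $L^{(p)}_q$ are no longer locally isometric: $\calL_q$ contains the ``cusp direction'' $N^{-1}e_{12}$, while $L^{(p)}_q$ comes from an Eichler order of level $N/p$ at $q$. The strategy is to expand each density as a linear combination of the same basic quantities $\alpha_q(X,NT,L_0)$ and $R_1(X,NT,L_0)$ attached to the reference lattice $L_0 = (M_2(\bbZ_q)^{tr=0},\det)$. For $\calL_q$ this is provided by \Cref{lem:rep dens relation}; the analogous expansion for $L^{(p)}_q$ is provided by \cite[Corollary 8.5]{YangLocal}. Setting $X = 1$ in both expansions and combining, a short algebraic simplification should produce the identity $\alpha_q(1,NT,L^{(p)}_q) = q\cdot\alpha_q(1,T,\calL_q)$; together with the comparison $|\det \calL_q|_q = q^{-1}$ versus $|\det L^{(p)}_q|_q = q^{-2}$, this yields the claimed identity of Whittaker values at $s = 0$.

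The main obstacle is the bookkeeping in this second case: the two expansions each contain several ``error terms'' beyond the main term $\alpha_q(X,NT,L_0)$, and one must verify that these contributions (the $R_1(X,NT,L_0)$ pieces and the constant terms) conspire at $X=1$ to deliver exactly one factor of $q$. That no such polynomial identity holds away from $X=1$ is expected --- the two sections really are attached to genuinely distinct lattices at $q$, and it is only the special value at $s=0$ that has an intrinsic, lattice-independent meaning, reflecting the fact that $\calL$ and $L^{(p)}$ sit inside quadratic spaces of the same local isomorphism class everywhere except at $p$ and $\infty$.
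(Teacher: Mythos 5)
Your proposal matches the paper's proof essentially step for step: you invoke the same Whittaker–to–representation-density identity, cite the same \cite[Proposition 8.6]{YangLocal} scaling argument for $q \nmid N$, and for $q \mid N$ use the same comparison of \cite[Corollary 8.5]{YangLocal} with \Cref{lem:rep dens relation} at $X=1$ together with the discriminant ratio $|\det \calL_q|_q / |\det L^{(p)}_q|_q = q$. The only cosmetic difference is that you correctly write $X = q^{-s}$ where the paper has the typo $X = p^{-s}$, and you add interpretive commentary about why the identity only holds at $s=0$ when $q \mid N$; the substance is identical.
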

	
	\begin{corollary} \label{cor:nondeg Eis shift}
		Suppose $\mathrm{Diff}(T) = \{p\}$. Then
		\begin{equation*}
		E'_T(g_{\tau}, 0, \Phi^{\calL}) \cdot q^{-T} = \frac{\nu_p(NT) \cdot \log p}{2 c_p}   \cdot E_{NT}(g_{\tau}, 0, \Phi^{(p)}) \cdot q^{-NT}
		\end{equation*}
		
		\begin{proof}
			By \Cref{prop:Whittaker dichotomy}, we have
			\begin{equation}
			W_{T,p}(e, 0, \Phi^{\calL}_p) = 0,
			\end{equation}
			and so
			\begin{equation}
			E'_T(g_{\tau}, 0, \Phi^{\calL}) = W'_{T,p}(e, 0, \Phi_{p}^{\calL}) \cdot  W_{T,\infty}(g_{\tau}, 0, \Phi^{\calL}_{\infty}) \cdot \prod_{ \substack { v < \infty \\ v \neq p}} W_{T,v}(e, 0, \Phi_{v}^{\calL}).
			\end{equation}
			At the infinite place, we have that $\Phi_{\infty}^{\calL} = \Phi_{\infty}^{3/2} = \Phi_{\infty}^{(p)}$. The corresponding Whittaker functionals are given explicitly by the formula
			\begin{equation}
			W_{T,\infty}(g_{\tau}, 0, \Phi_{\infty}^{3/2} )  = - 2 \sqrt{2}(2 \pi)^2 \det(v)^{3/4} q^T,
			\end{equation}
			cf.\  \cite[Theorem 5.2.7(i)]{KRYbook}, for any non-degenerate $T \in \Sym_2(\bbQ)$. Thus, in our case we find
			\begin{equation} \label{eq:infty}
			\frac{W_{T, \infty}(g_{\tau}, 0, \Phi_{\infty}^{\calL})}{W_{NT, \infty}(g_{\tau}, 0, \Phi_{\infty}^{(p)})} = \frac{q^T}{q^{NT}}.
			\end{equation}
			Combining this identity with \Cref{lem:Whittaker values shift} and \Cref{prop:local intersection number}, we find
			\begin{align}
			E_T'(g_{\tau}, 0, \Phi^{\calL}) q^{-T} &= \frac{\nu_p(NT) \log p}{2 c_p} \cdot \left( \prod_{v < \infty} \frac{\gamma_v(\calV)^2}{\gamma_v(V^{(p)})^2} \right)  \cdot \prod_{v \leq \infty} W_{NT,v}(g_{\tau}, 0, \Phi^{(p)}) \cdot q^{-NT} \\
			&= \frac{\nu_p(NT) \log p}{2 c_p}  \left( \prod_{v < \infty} \frac{\gamma_v(\calV)^2}{\gamma_v(V^{(p)})^2} \right)   E_{NT}(g_{\tau}, 0, \Phi^{(p)}) \,  q^{-NT}.
			\end{align}
			It remains to show that the product involving Weil indices equals 1. For any quadratic space $V$, the product formula $\prod_{v \leq \infty} \gamma_v(V) = 1$ holds; thus
			\begin{equation}
			\prod_{v < \infty} \frac{\gamma_v(\calV)^2}{\gamma_v(V^{(p)})^2}  = \frac{\gamma_{\infty}(\calV)^2}{\gamma_{\infty}(V^{(p)})^2}.
			\end{equation}
			By \cite[p.\ 330]{KRYbook}, we have $\gamma_{\infty}(\calV)^2 = -1 = \gamma_{\infty}(V^{(p)})^2$, so the ratio above is one, and the corollary is proved.
		\end{proof}
	\end{corollary}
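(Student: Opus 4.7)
The plan is to exploit the vanishing of $W_{T,p}(e,0,\Phi^{\calL}_p)$ forced by the incoherence condition at $p$, apply the local arithmetic Siegel-Weil identity already established in Proposition~\ref{prop:local intersection number}, and then carefully match the remaining Whittaker functionals (at $v \neq p$ and at infinity) between the sections $\Phi^{\calL}$ and $\Phi^{(p)}$.

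First, since $p \in \mathrm{Diff}(T,\calV)$, Proposition~\ref{prop:Whittaker dichotomy}(ii) gives $W_{T,p}(e, 0, \Phi^{\calL}_p) = 0$. Differentiating the Euler product
\[
E_T(g_\tau, s, \Phi^{\calL}) = \prod_{v \leq \infty} W_{T,v}(g_{\tau,v}, s, \Phi^{\calL}_v),
\]
only the derivative of the $p$-factor survives at $s=0$:
\[
E'_T(g_\tau, 0, \Phi^{\calL}) = W'_{T,p}(e, 0, \Phi^{\calL}_p) \cdot W_{T,\infty}(g_{\tau,\infty}, 0, \Phi^{\calL}_\infty) \cdot \prod_{\substack{v < \infty \\ v \neq p}} W_{T,v}(e, 0, \Phi^{\calL}_v).
\]

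Next, I would identify $\Phi^{ra}_p$ with $\Phi^{(p)}_p$: this is legitimate since $B^{(p)}$ ramifies at $p$ and the Eichler order $\calO^{(p)}$ is locally maximal at $p$ (as its level is prime to $p$). Applying Proposition~\ref{prop:local intersection number} then rewrites
\[
W'_{T,p}(e, 0, \Phi^{\calL}_p) = \frac{\nu_p(NT)\,\log p}{2 c_p}\cdot \left(\frac{\gamma_p(\calV_p)}{\gamma_p(V^{(p)}_p)}\right)^{2} W_{NT,p}(e, 0, \Phi^{(p)}_p).
\]
For each finite place $v \neq p$, Lemma~\ref{lem:Whittaker values shift} replaces $W_{T,v}(e, 0, \Phi^{\calL}_v)$ by $(\gamma_v(\calV_v)/\gamma_v(V^{(p)}_v))^2 W_{NT, v}(e, 0, \Phi^{(p)}_v)$ (at $s=0$, which is all we need). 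At infinity, $\Phi^{\calL}_\infty = \Phi^{(p)}_\infty = \Phi^{3/2}_\infty$, and the explicit formula for the archimedean Whittaker functional of scalar weight $3/2$ gives $W_{T,\infty}(g_\tau, 0, \Phi^{3/2}_\infty)/W_{NT,\infty}(g_\tau, 0, \Phi^{3/2}_\infty) = q^T/q^{NT}$.

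Reassembling the product and pulling out the factor $\nu_p(NT)\log p/(2c_p)$, what remains is $E_{NT}(g_\tau, 0, \Phi^{(p)})$ multiplied by $q^{T-NT}$ and by $\prod_{v < \infty}(\gamma_v(\calV_v)/\gamma_v(V^{(p)}_v))^2$. Since the global product $\prod_{v\leq \infty} \gamma_v(V) = 1$ for any rational quadratic space, this finite product equals $\gamma_\infty(V^{(p)})^2/\gamma_\infty(\calV)^2$; the archimedean values $\gamma_\infty(\calV)^2 = -1 = \gamma_\infty(V^{(p)})^2$ (standard formula, or \cite[p.~330]{KRYbook}) kill this factor, and the identity falls out. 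The main obstacle I anticipate is the careful bookkeeping of local sections at primes $v \mid N$, $v \neq p$, where $\calL_v$ and $L^{(p)}_v$ differ in level; one must verify the Whittaker identity at $s=0$ only (rather than as polynomial identities in the uniformizer variable), which is exactly the content of part (ii) of Lemma~\ref{lem:Whittaker values shift}.
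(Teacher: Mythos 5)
Your proposal is correct and follows essentially the same argument as the paper: vanishing of $W_{T,p}$ at $s=0$ by incoherence, reduction to the $p$-adic derivative times values at the other places, conversion via \Cref{prop:local intersection number} and \Cref{lem:Whittaker values shift}, and cancellation of the Weil-index ratio by the product formula. The one refinement worth noting is that you explicitly record the identification $\Phi^{ra}_p = \Phi^{(p)}_p$ (valid since $\calO^{(p)}$ is maximal at $p$), which the paper uses silently when it invokes the local proposition.
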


	The next step in our proof is to apply the Siegel-Weil formula for the positive definite space $V^{(p)}$. Let $H^{(p)} = O(V^{(p)})$, viewed as an algebraic group over $\bbQ$. If
	\begin{equation}
	\varphi^{(p)} \in S((V^{(p)} \otimes_{\bbQ} \bbA_f)^{\oplus 2})
	\end{equation}
	is the characteristic function of $(L^{(p)} \otimes_{\bbZ} \widehat \bbZ)^{\oplus 2}$, we  define the theta integral
	\begin{equation}
	I(g, \varphi^{(p)}) = \int_{[H^{(p)}]} \ \Theta(g, h, \varphi^{(p)}) \, dh, \qquad g \in G_{\bbA}
	\end{equation}
	here $\Theta(g, h, \varphi^{(p)})$ is the usual theta function, and $dh$ is the left Haar measure on $[H^{(p)} ] = H^{(p)}(\mathbb Q) \backslash H^{(p)}(\bbA)$ normalized so the total volume is 1.
	
	Then the Siegel-Weil formula implies that for any non-degenerate $T$, we have
	\begin{equation}
	E_{T}(g, 0, \Phi^{(p)}) = 2 \,  I_{T}(g,  \varphi^{(p)}).
	\end{equation}

	The computation of $I_T(g,\varphi^{(p)})$ is given  in \cite[\S 5.3]{KRYbook}; we review the computation here. Note that $O(V^{(p)}) \simeq SO(V^{(p)}) \times \mu_2$, and since $\varphi^{(p)}$ is the characteristic function of a lattice, it is $\mu_2(\bbA_f)$-invariant. The measure $dh$ decomposes as $dh_1 \times dc$ where the volume of $\mu_2(\bbQ) \backslash \mu_2(\bbA)$ with respect to $dc$ is equal to $1/2$; fixing a gauge form $\omega$ on $SO(V^{(p)})$ as in \cite[p.\ 126]{KRYbook}, we obtain a decomposition $dh_1 = \prod_{v \leq \infty} dh_{1,v}$.

	There is a surjective map $B^{(p)} \to SO(V^{(p)})$, where an element of $B^{(p)}$ acts by conjugation on $V^{(p)}$.
	Let $K^{(p)} \subset SO(V^{(p)})(\bbA_f)$ denote the image of $(\widehat{\calO^{(p)}})^{\times}$ under this map, and write
	\begin{equation} \label{eqn:weak approx SO(Vp)}
	SO(V^{(p)})(\bbA_f) = \coprod_j SO(V^{(p)})(\bbQ) \cdot h_j \cdot K^{(p)}.
	\end{equation}
	Let $\Gamma_j = SO(V^{(p)})(\bbQ) \cap  h_j K^{(p)} h^{-1}_j$. Then \cite[Proposition 5.3.6]{KRYbook} implies that for $g = (g_{\infty}, e, \dots)$,
	\begin{equation} \label{eqn:theta coeff}
	I_T(g,\varphi^{(p)}) = \frac12 O_{T,\infty}(g_{\infty},\varphi^{(p)}_{\infty}) \,  \mathrm{vol}\left( K^{(p)}, dh_{1,f} \right) \,
	\left( \sum_j \sum_{\substack{ y \in \Omega(T, V^{(p)}) \\ \text{mod } \Gamma_j }} \varphi^{(p)}(h_j^{-1} y) \right)
	\end{equation}
	where
	\begin{equation}
	O_{T,\infty}(g_{\infty}, \varphi'_{\infty}) = \int_{SO(V^{(p)})(\bbR)} \omega(g_{\infty}) \varphi'_{\infty}(h_{1,\infty}^{-1} \cdot x_0) \  dh_{1,\infty}
	\end{equation}
	and $x_0 \in \Omega(T,V^{(p)})$ is fixed.
	
	The volume appearing in \eqref{eqn:theta coeff} can be computed following \cite[Lemma 5.3.9]{KRYbook}; we sketch the argument here.
	
	\begin{lemma} \label{lemma:local volumes}
		Suppose $\mathrm{Diff}(T) = \{p\}$. Then
		\begin{equation}
		O_{T,\infty}(g_{\tau}, \varphi^{(p)}_{\infty})  \ \mathrm{vol}(K^{(p)}, dh_{1,f}) = \frac{24}{p-1}    \cdot \left( \prod_{\substack{q|N \\ q \neq p}} (1+q)^{-1} \right) \cdot \det(v)^{3/4} \, q^T.
		\end{equation}
		\begin{proof}
			Suppose $v \leq \infty $ and $\varphi_v \in S((V^{(q)}_v)^2)$ is any Schwartz function, and define the local orbital integral
			\begin{equation}
			O_{T,v}(g_v, \varphi_v) :=  \int_{SO(V^{(q)})(\bbQ_v)}  \omega(g_v) \varphi_v(h^{-1} x_0) \   dh_{1,v}
			\end{equation}
			where $x_0 \in \Omega(T, V^{(q)})$ is a fixed tuple in $(V^{(q)})^2$ with $T(x_0) = T$.
			Then there is a non-zero constant $d_v = d_{v}(V^{(q)}, dh_{1,v} )$ such that
			\begin{equation}
			O_{T,v}(g_v, \varphi_v)  \ = \  d_{v} \, W_{T,v}(g_v, 0, \Phi(\varphi_v)).
			\end{equation}
			By \cite[Proposition 5.3.3]{KRYbook}, this constant only depends on the local measure $dh_{1,v}$, and not on $T$, and moreover
			\begin{equation}
			\prod_{v \leq \infty}d_v  = 1.
			\end{equation}
			Now arguing as in \cite[Lemma 5.3.9]{KRYbook}, we have that for a finite prime $q$,
			\begin{equation}
			\mathrm{vol}(K_q^{(p)}, dh_{1,q}) = d_q \cdot \gamma_q(V^{(p)})^2 \cdot |2|_{q}^{3/2}
			\cdot 	\begin{cases}
			(1 - q^{-2}), & 				q \nmid Np \\
			(1+q )^{-1} (1-q^{-2}),&		q |N, \,  q \neq p	 \\
			p^{-2} (p+1), &						 q = p.
			\end{cases}
			\end{equation}
			Thus, we have
			\begin{align*}
			O_{T,\infty}&(g_{\tau,\infty}, \varphi^{(p)}_{\infty}) \cdot 	\mathrm{vol}(K^{(p)}, dh_{1,f}) \\
			&=   W_{T, \infty}(g_{\tau, \infty}, 0, \Phi^{3/2}_{\infty})
			\left( \prod_{v < \infty} \gamma_v(V^{(p)})^2 |2|_v^{3/2} \right)
			\left( \prod_{q < \infty}(1-q^{-2}) \right) \\
			& \qquad \times
			\left( \frac{p^{-2}(p+1)}{1-p^{-2}} \right)
			\left( \prod_{\substack{q|N \\ q \neq p}} (q+1)^{-1} \right)
			 \\
			&=  W_{T, \infty}(g_{\tau, \infty}, 0, \Phi^{3/2}_{\infty})
			\cdot \left(\gamma_{\infty}(V^{(p)})^{-2} \, 2^{-3/2}  \right)  \zeta(2)^{-1} \frac{1}{p-1}\left( \prod_{\substack{q|N \\ q \neq p}} (q+1)^{-1} \right).
			\end{align*}
			Finally, we recall that $\gamma_{\infty}(V^{(q)})^2 = -1$, see \cite[eqn.\ (5.3.71)]{KRYbook} and $\zeta(2) = \pi^2 / 6$, and	
			\begin{equation}
			W_{T,\infty}(g_{\tau}, 0, \Phi_{\infty}^{3/2} )  = - 2 \sqrt{2}(2 \pi)^2 \det(v)^{3/4} q^T,
			\end{equation}
			cf.\  \cite[Theorem 5.2.7(i)]{KRYbook}; this proves the desired formula.
		\end{proof}
	\end{lemma}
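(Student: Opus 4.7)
The plan is to reduce the global quantity $O_{T,\infty}(g_{\tau},\varphi^{(p)}_{\infty}) \cdot \mathrm{vol}(K^{(p)}, dh_{1,f})$ to a product of local Whittaker functions, one place at a time, by invoking the local Siegel--Weil identification of orbital integrals with Whittaker functionals. Concretely, following \cite[\S 5.3]{KRYbook}, for each place $v$ there is a local constant $d_v = d_v(V^{(p)}, dh_{1,v})$, depending only on the measure chosen, such that
\begin{equation}
O_{T,v}(g_v,\varphi_v) \ = \ d_v \cdot W_{T,v}(g_v, 0, \Phi(\varphi_v)),
\end{equation}
and these satisfy the product formula $\prod_v d_v = 1$. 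The first step is thus to rewrite the left-hand side as a global Whittaker product, absorbing all the $d_v$'s via the product formula, leaving only the archimedean Whittaker function and the genuine local volumes.

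The second step is the explicit computation of $\mathrm{vol}(K_q^{(p)}, dh_{1,q})$ at each finite prime $q$. Here the three regimes $q \nmid Np$, $q \mid N$ with $q \neq p$, and $q = p$ must be handled separately, since the orders $\calO^{(p)}_q$ have different shapes in each case (a maximal order at split non-$N$ primes, an Eichler order of level $q$ at bad primes away from $p$, and the maximal order of the ramified quaternion algebra at $q=p$). For each case one computes $\mathrm{vol}(K_q^{(p)}, dh_{1,q})$ by combining the local Siegel--Weil constant $d_q$ with the representation density $\alpha_q(1, NT_0, L^{(p)}_q)$ for any $T_0$ representable by $V^{(p)}_q$, exactly as in the proof of \cite[Lemma 5.3.9]{KRYbook}. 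The Weil index factors $\gamma_q(V^{(p)})^2$ and the normalization $|2|_q^{3/2}$ appear; only the product over all $q$ matters, since $\prod_{v} \gamma_v(V^{(p)})^2 = 1$ and $\prod_q |2|_q^{3/2} = |2|_\infty^{-3/2}$.

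The third step is assembly at infinity. We substitute the explicit formula $W_{T,\infty}(g_{\tau},0,\Phi^{3/2}_{\infty}) = -2\sqrt{2}(2\pi)^2 \det(v)^{3/4}\, q^T$ from \cite[Theorem 5.2.7(i)]{KRYbook}, then use $\gamma_\infty(V^{(p)})^2 = -1$ and $\zeta(2)=\pi^2/6$. The product $\prod_{q<\infty}(1 - q^{-2})$ coming from the maximal-order factors collapses into $\zeta(2)^{-1}$, and multiplying this against $(2\pi)^2$, the factor $2\sqrt{2}\cdot 2^{-3/2}$, and the signs from $\gamma_\infty^{-2}$ yields precisely the constant $24/(p-1) \cdot \prod_{q\mid N, q\neq p}(1+q)^{-1}$, confirming the claim.

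The main obstacle is the careful bookkeeping of local Weil indices, powers of $2$, and the $\gamma_q$-factors across the three regimes; these constants are individually delicate and only cancel correctly when the Weil index product formula and the product formula $\prod_v d_v = 1$ are invoked in tandem. Given the care taken in \cite[\S 5.3]{KRYbook}, one can essentially transcribe that proof with the quaternion order adjusted to $\calO^{(p)}$; the only genuinely new input is the local volume at primes $q \mid N$ with $q \neq p$, where the Eichler-order factor $(1+q)^{-1}$ emerges from \eqref{eq:Lp} with $X=1$ and a direct evaluation of the representation density at the split rank-zero case.
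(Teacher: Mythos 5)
Your proposal is correct and follows essentially the same route as the paper: the local Siegel--Weil constants $d_v$ with $\prod_v d_v = 1$ from \cite[Proposition 5.3.3]{KRYbook}, the three-regime computation of $\mathrm{vol}(K_q^{(p)}, dh_{1,q})$ as in \cite[Lemma 5.3.9]{KRYbook} (with the Eichler factor $(1+q)^{-1}$ at $q \mid N$, $q \neq p$ coming from the representation density as in \eqref{eq:Lp}), and the final assembly using $W_{T,\infty}(g_{\tau},0,\Phi^{3/2}_{\infty}) = -2\sqrt{2}(2\pi)^2\det(v)^{3/4}q^T$, $\gamma_\infty(V^{(p)})^2=-1$, and $\zeta(2)=\pi^2/6$. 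The constant bookkeeping you describe checks out against the paper's computation.
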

	
	Recall that we had written the Fourier expansion of the Eisenstein series $E(\tau,s,\Phi^{\calL})$ as
	\begin{equation}
	E(\tau, s, \Phi^{\calL}) = \sum_T C_T(v, s, \Phi^{\calL}) q^T.
	\end{equation}
	
	Combining \Cref{cor:nondeg Eis shift} and \Cref{lemma:local volumes} (replacing $T$ by $NT$ in the latter lemma) we obtain the following result.
	\begin{corollary} \label{cor:eisenstein pos def}
		Suppose $T>0$ and $\mathrm{Diff}(T) = \{ p\}$.Then $C'_T(0, \Phi^{\calL}) = C'_T(0,v, \Phi^{\calL})$ is independent of $v$, and is given by the formula
		\begin{equation}
		C'_T(0, \Phi^{\calL})
		= \ \nu_p(NT)  \log p \cdot \left( 12\, \prod_{q | N}(1+q)^{-1} \right)   \cdot \ \left( \sum_j \sum_{\substack{ x \in \Omega(NT, V^{(p)}) \\ \text{mod } \Gamma_j }} \varphi^{(p)}(h_j^{-1} x) \right)
		\end{equation}
	\end{corollary}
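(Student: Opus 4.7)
The plan is to run a straightforward chain of substitutions stringing together the results already assembled: Corollary \ref{cor:nondeg Eis shift} (which moves us from $\Phi^{\calL}$ to $\Phi^{(p)}$), the classical Siegel--Weil formula for the anisotropic ternary space $V^{(p)}$, the unfolding of the theta integral given by equation \eqref{eqn:theta coeff}, and the local volume computation of Lemma \ref{lemma:local volumes} applied to $NT$ in place of $T$.

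First, I would invoke Corollary \ref{cor:nondeg Eis shift} to rewrite
\[
E'_T(g_{\tau}, 0, \Phi^{\calL}) \, q^{-T} \ = \ \frac{\nu_p(NT) \log p}{2 c_p} \, E_{NT}(g_{\tau}, 0, \Phi^{(p)}) \, q^{-NT}.
\]
Since $V^{(p)}$ is anisotropic of dimension $3$, Weil's theta integral converges absolutely and the (non-regularized) Siegel--Weil formula applies, giving $E_{NT}(g_\tau, 0, \Phi^{(p)}) = 2 I_{NT}(g_\tau, \varphi^{(p)})$. Unfolding via the double coset decomposition \eqref{eqn:weak approx SO(Vp)} then yields the explicit formula \eqref{eqn:theta coeff} for the $NT$-th Fourier coefficient of the theta integral, in which the archimedean orbital integral $O_{NT,\infty}(g_{\tau,\infty}, \varphi^{(p)}_\infty)$ is paired with the finite volume $\mathrm{vol}(K^{(p)}, dh_{1,f})$ and the orbit sum over $\Omega(NT, V^{(p)})$ modulo the $\Gamma_j$.

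Next I would apply Lemma \ref{lemma:local volumes} with $T$ replaced by $NT$, which evaluates the product $O_{NT,\infty}(g_\tau, \varphi_\infty^{(p)}) \cdot \mathrm{vol}(K^{(p)}, dh_{1,f})$ as
\[
\frac{24}{p-1} \Big( \prod_{\substack{q|N\\q\neq p}} (1+q)^{-1} \Big) \det(v)^{3/4} q^{NT}.
\]
Assembling these substitutions and passing from $E'_T$ to the classical Fourier coefficient $C'_T(0, v, \Phi^{\calL}) = \det(v)^{-3/4}\, E'_T(g_\tau, 0, \Phi^{\calL}) q^{-T}$, the factor $\det(v)^{3/4} q^{NT}$ cancels exactly against $\det(v)^{-3/4} q^{-NT}$, which simultaneously establishes the claimed independence from $v$ and isolates a purely numerical prefactor $\frac{12 \nu_p(NT) \log p}{c_p (p-1)} \prod_{q|N, q\neq p}(1+q)^{-1}$ in front of the orbit sum.

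The final step is a bookkeeping check: one verifies that $c_p(p-1) = 1$ when $p \nmid N$ and $c_p(p-1) = p+1$ when $p \mid N$, so in both regimes the constant collapses to $12 \prod_{q | N}(1+q)^{-1}$, matching the claim. There is no genuine obstacle here -- all of the analytic and geometric input has already been isolated in the prior results -- so the only thing to be careful about is the uniform handling of the two cases $p\mid N$ and $p\nmid N$, which enter both through $c_p$ (from Proposition \ref{prop:local intersection number}) and through the local volume factor at primes dividing $N$ (from Lemma \ref{lemma:local volumes}); one must confirm that these case splits cancel cleanly into a single clean constant.
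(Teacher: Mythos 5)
Your proposal is correct and follows exactly the same route the paper takes: the paper's own proof is the one-liner ``Combining \Cref{cor:nondeg Eis shift} and \Cref{lemma:local volumes} (replacing $T$ by $NT$ in the latter lemma)'', and you have simply written out the chain of substitutions (Siegel--Weil for the definite space $V^{(p)}$, the unfolded theta integral \eqref{eqn:theta coeff}, and the local volume evaluation) together with the two-case bookkeeping verifying that $c_p(p-1)$ and the $(1+p)^{-1}$ factor cancel to yield $12\prod_{q\mid N}(1+q)^{-1}$ uniformly.
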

	
	Finally, we prove the main identity in the positive definite case:
	\begin{theorem} \label{thm:main thm pos def} Suppose $T>0$. Then
		\begin{equation}
		\widehat \deg \,  \calZ(T)  = \frac{ \prod_{q|N}(q+1)}{24} \cdot C'_T(0,\Phi^{\calL})
		\end{equation}
		
		\begin{proof}
			First, suppose $\# \mathrm{Diff}(T) > 1$. By \Cref{lemma:support of Z(T)}, the left hand side vanishes, and by \Cref{prop:Whittaker dichotomy}, the right hand side vanishes, establishing the result in this case.

			We may therefore suppose $\mathrm{Diff}(T) = \{ p\}$ for some prime $p$.
			Recall that there is an identification\footnote{Briefly, the right hand side is interpreted as the set of   invertible right $\calO^{(p)}$-modules. This latter set is in bijection with $\calX(\overline \bbF_p)^{ss}$ as follows. Fix a base point $(\pi_0 \colon E_0 \to E'_0 )\in \calX(\overline \bbF_p)^{ss}$. Then, given a point $\pi \colon E \to E' \in \calX(\overline \bbF_p)^{ss}$, the corresponding $\calO^{(p)}$ module is $\Hom(\pi_0, \pi)$. See e.g.\ \cite[\S 3]{Ribet100} for details.}
			\begin{equation}
			\calX (\bar{\bbF}_p)^{ss} \ \simeq  \ \left[ B^{(p), \times} \Big\backslash \left( B^{(p),\times}(\bbA_f) \Big/   \widehat{ \calO^{(p)}}  {}^{\times} \right) \right].
			\end{equation}
			Now suppose $\varphi \colon E \to E' \in \calX(\overline \bbF_p)^{ss}$ is a geometric point corresponding to the coset $[b]  = b\, \widehat{\calO^{(p)}}{}^{\times}$ as above. Then the lattice $L(\varphi)$, defined in \eqref{eqn:moduli lattice def},  is identified with the lattice
			\begin{equation}
			b \cdot L^{(p)}  :=  \left( b  \, \widehat{ L^{(p)}} b^{-1} \right)	 \cap V^{(p)}	
			\end{equation}
			where $L^{(p)} = V^{(p)} \cap \calO^{(p)}$ as above.
			
			Tracing through definitions, we have an identification
			\begin{equation}	
			\calZ(T)(\overline \bbF_p) \simeq \left[ B^{(p), \times} \Big \backslash \calC(T) \right]			
			\end{equation}
			where 			
			\begin{equation}
			\calC(T) = \left\{ ( y, [b]) \in  (V^{(p)})^{ 2} \times   B^{(p),\times}(\bbA_f) \Big/   \widehat{ \calO^{(p)}}  {}^{\times}  \Big | \ y \in (b\cdot L^{(p)})^{ 2},\  T(y) = NT  \right\}.
			\end{equation}
			As described in  \Cref{sec:local}, the (arithmetic) degree of the local ring of $\calZ(T)$ at each geometric point is the same, and is given by $\nu_p(T) \log p$. Hence
			\begin{align}
			\widehat \deg \, \calZ(T)  &= \sum_{z \in \calZ(T)(\bar{\bbF}_p )}  \frac{ \log |  \calO_{\calZ(T),z}| }{| \Aut(z)|} \\	
			&=	 \nu_p(T) \log p \cdot \# \left[ B^{(p), \times} \big\backslash \calC(T)\right],
			\end{align}
			where on the right hand side, we have the ``stacky" cardinality, i.e.\ the number of orbits, with each orbit weighted by the reciprocal of the order of the corresponding stabilizer group.
			
			To determine this cardinality, let $H^{(p)} = SO(V^{(p)})$, and recall that there is an exact sequence
			\begin{equation}
			\begin{CD}
			1 @>>> Z @>>> B^{(p),\times} @>c>> H^{(p)} @>>> 1
			\end{CD}
			\end{equation}
			where $b \in B^{(p),\times}$ acts on $V^{(p)}$ by conjugation, and $Z \simeq \mathbb G_m$ is the centre. Since $Z(\bbA_f) = Z(\bbQ) \cdot \left( Z(\bbA_f) \cap ( \widehat{ \calO^{(p)}})^{\times} \right) $, we have a bijection
			\begin{equation} \label{eqn:ss points double coset}
			B^{(p), \times} \Big\backslash  B^{(p),\times}(\bbA_f) \Big/   \widehat{ \calO^{(p)}}  {}^{\times} \longleftrightarrow  H^{(p)} (\bbQ) \Big\backslash H^{(p)} (\bbA_f) \Big/ K^{(p)}
			\end{equation}
			where $K^{(p)}$ was, by definition, the image of $(\widehat{ \calO^{(p)})}{}^{\times}$ in $H^{(p)} (\bbA_f)$. As in \eqref{eqn:weak approx SO(Vp)}, we choose representatives $\{ h_k\}$ for this double coset space, and set $  \Gamma_j = H^{(p)}(\bbQ) \cap  h_j K^{(p)} h^{-1}_j$.
			Moreover, since the components of $y$ span a two dimensional subspace of $V^{(p)}$, it follows that the stablilizer of any point $(y, [b])$ is equal to $Z(\mathbb Q) \cap \widehat{\calO^{(p)}}{}^{\times} = \{ \pm 1 \}$. Thus
			\begin{align}
			\# \left[ B^{(p), \times} \big\backslash \calC(T)\right] \ =  \ \frac12 \sum_j \, \sum_{\substack{y \in h_j \cdot L^{(p)} \\ T(y) = NT  \\ \text{mod} \Gamma_j }} 1 \ =  \ \frac12  \sum_j \, \sum_{\substack{y \in \Omega(NT, V^{(p)}) \\ \text{mod } \Gamma_j }} \varphi^{(p)}(h_j^{-1} y)
			\end{align}
			where $\varphi^{(p)} \in S(V^{(p)}(\bbA_f)^2)$ is the characteristic function of $(\widehat{ L^{(p)}})^{2}$.	The theorem follows from a comparison with \Cref{cor:eisenstein pos def}.
		\end{proof}
	\end{theorem}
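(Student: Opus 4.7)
The plan is to match both sides of the identity by a direct geometric computation of $\widehat\deg \, \calZ(T)$ on the left and invoking \Cref{cor:eisenstein pos def} on the right. I would first dispense with the ``trivial'' case $\#\mathrm{Diff}(T,\calV) \geq 2$: by \Cref{lemma:support of Z(T)} the cycle $\calZ(T)$ is empty, while \Cref{prop:Whittaker dichotomy}(ii) forces $W_{T,v}(e,0,\Phi^{\calL}_v) = 0$ at (at least) two finite places, and since neither vanishing can be a simple zero cancelled by the derivative, $C'_T(0,\Phi^{\calL}) = 0$ as well.

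Assume therefore that $\mathrm{Diff}(T,\calV) = \{p\}$ for a single prime $p$. The key input is the quaternionic uniformization of the supersingular locus $\calX(\overline{\bbF}_p)^{ss}$: with $B^{(p)}$ the rational quaternion algebra ramified at $p$ and $\infty$, and $\calO^{(p)}$ the Eichler order of level $N$ (resp.\ $N/p$) when $p\nmid N$ (resp.\ $p\mid N$), one has
\begin{equation}
\calX(\overline{\bbF}_p)^{ss} \simeq \bigl[ B^{(p),\times} \big\backslash B^{(p),\times}(\bbA_f) \big/ \widehat{\calO^{(p)}}^{\times} \bigr],
\end{equation}
and under this identification the lattice $L_\varphi$ attached to a geometric point corresponds to the conjugate $b \cdot L^{(p)} := (b\,\widehat{L^{(p)}}b^{-1}) \cap V^{(p)}$ where $L^{(p)} = \calO^{(p)} \cap V^{(p)}$.

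Next, I would lift the moduli description of $\calZ(T)$ to this uniformization: points of $\calZ(T)(\overline{\bbF}_p)$ correspond to orbits of $B^{(p),\times}$ on pairs $(y,[b])$ with $y \in (b\cdot L^{(p)})^{\oplus 2}$ and $T(y) = NT$. The arithmetic degree decomposes as
\begin{equation}
\widehat\deg\,\calZ(T) = \sum_{z \in \calZ(T)(\overline{\bbF}_p)} \frac{\log|\calO_{\calZ(T),z}|}{|\mathrm{Aut}(z)|},
\end{equation}
and by \Cref{prop:local deg} the local length at every geometric point equals $\nu_p(NT)\log p$, which is independent of the point and factors out of the sum. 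Passing from $B^{(p),\times}$ to $H^{(p)} = SO(V^{(p)})$ through the exact sequence with central kernel $Z \simeq \bbG_m$, and using $Z(\bbA_f) = Z(\bbQ) \cdot (Z(\bbA_f) \cap \widehat{\calO^{(p)}}^{\times})$, the double coset set reorganizes into the representatives $\{h_j\}$ of \eqref{eqn:weak approx SO(Vp)}. Because the components of $y$ span a two-dimensional subspace of $V^{(p)}$, the stabilizer of each pair collapses to $Z(\bbQ) \cap \widehat{\calO^{(p)}}^{\times} = \{\pm 1\}$, producing a uniform factor of $\tfrac12$ and leaving the orbit sum $\tfrac12 \sum_j \sum_{y} \varphi^{(p)}(h_j^{-1}y)$.

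The step I expect to require the most care is the final bookkeeping of constants: one must verify that the factor $\nu_p(NT)\log p$ cancels cleanly with the corresponding factor in \Cref{cor:eisenstein pos def}, and that the remaining combinatorial prefactor $12\prod_{q|N}(1+q)^{-1}$ coming from the Siegel--Weil side matches $(\prod_{q|N}(q+1)/24)^{-1} \cdot \tfrac12$ coming from the geometric side. Tracing the factor of $\tfrac12$ through the stabilizer collapse and reconciling it with the normalization of the measure on $\mu_2(\bbQ)\backslash\mu_2(\bbA)$ used to define the theta integral $I(g,\varphi^{(p)})$, together with the stacky $\tfrac12$ noted in \Cref{rmk:stacky 1/2}, is the delicate point where an off-by-two error could easily hide.
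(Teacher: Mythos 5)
Your proposal is correct and follows essentially the same route as the paper: the trivial case via \Cref{lemma:support of Z(T)} and \Cref{prop:Whittaker dichotomy}, then the quaternionic uniformization of the supersingular locus, the factorization of the degree into the Gross--Keating multiplicity times a stacky orbit count with stabilizer $\{\pm 1\}$, and the comparison with \Cref{cor:eisenstein pos def}. Your final constant check, $12\prod_{q|N}(1+q)^{-1} = \bigl(\prod_{q|N}(q+1)/24\bigr)^{-1}\cdot\tfrac12$, is exactly the bookkeeping the paper leaves implicit, and it is right.
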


	\subsection{ $T$ of signature $(1,1)$ or $(0,2)$}
	Recall that when $T$ is non-degenerate but not positive definite, the class $\widehat{\calZ}(T,v)$ is purely archimedean:
	
	\begin{equation}
	\widehat \calZ(T,v) \ := \ \left( 0,  \Xi(T,v) \right) \ \in  \ \ChowHat{2}_{\bbC}(\calX_0(N))
	\end{equation}
	where $\Xi(T,v)$ is defined in \eqref{eqn:Green current indefinite case}. In this case, the arithmetic degree is given by
	\begin{equation} \label{eqn:arch degree integral}
	\widehat \deg \, \widehat{ \mathcal Z}(T,v)  \ = \ \frac12  \int_{\calX(\bbC)} \Xi(T,v) .
	\end{equation}
	The computation of this integral was carried out in \cite{GarciaSankaran} in the case of compact Shimura varieties. A crucial step in the argument is the application of the Siegel-Weil formula to relate the integral of a certain theta function (attached to the Schwartz form denoted by $\nu(x)$ in \emph{loc.\ cit.}) to a special value of the corresponding Eisenstein series.
	
	In our case, the fact that $\calV$ contains isotropic vectors implies that the relevant theta integrals do not necessarily converge for  arbitrary Schwartz functions; however, as we prove below, the particular Schwartz form relevant to our setting \emph{does} lead to a convergent theta integral, and work of Kudla and Rallis \cite{KudlaRallisRegSW} implies that the Siegel-Weil formula holds for this Schwartz form. With this fact established, the arguments of \cite{GarciaSankaran} carry through verbatim and yield a computation of the integral \eqref{eqn:arch degree integral}.
	
	To state things more precisely, recall  the explicit expression 			
	\begin{equation}
	\nu(\lambda) = \psi(\lambda) \, d \mu
	\end{equation}
	where
	\begin{equation}
	\psi(\lambda,z) =  \left(- \frac{1}{\pi} +  2 \sum_{i =1}^2( R(\lambda_i,z) + 2 Q(\lambda_i)) \right) e^{-2\pi \sum R(\lambda_i, z) + Q(\lambda_i)}, \qquad \lambda= (\lambda_1, \lambda_2) \in \calV_{\bbR}^2,
	\end{equation}
	and
	\begin{equation}
	d \mu = \frac{dx \wedge dy}{y^2}
	\end{equation}
	with $z = x + iy \in \bbH^{\pm}$.
	
	For future use, we fix the basepoint $i \in \mathbb H$ and abbreviate
	\begin{equation} \label{eqn:Schwartz function phi}
	\phi(\lambda) = \psi(\lambda, i)
	\end{equation}
	so that $\phi\in S(\calV_{\bbR}^2)$ is a Schwartz function.

	This Schwartz function gives rise to a theta function that, at least for the moment, is best described adelically. Let $H= O(\calV)$, and let $\omega$ denote the action of $G_{2,\mathbb A} \times H(\mathbb A)$  on $S(V^2(\mathbb A))$ via the Weil representation. 	Recall that the action of $H(\mathbb A)$ is the linear action; more precisely, for $h \in H(\mathbb A)$ ,  $\lambda = (\lambda_1, \lambda_2) \in \calV(\bbA)^2$ and $\varphi \in S(\calV(\bbA)^2)$, we have
	\begin{equation}
	\omega(1, h)  \varphi(\lambda) = \varphi( h^{-1} \cdot \lambda) = \varphi \left( h^{-1} \cdot \lambda_1, \,  h^{-1} \cdot \lambda_2  \right) .
	\end{equation}
	
	Returning to our Schwartz function $\phi$, we consider the adelic Schwartz function $\phi_{\bbA}=\phi\otimes 1_{\widehat \omega}$, where $1_{\widehat \omega} \in S(\calV(\bbA_f)^2)$ is the characteristic function of $\widehat \calL^2 = (\calL\otimes_{\bbZ} \widehat{\bbZ})^{ 2}$. Then the corresponding theta series is
	\begin{equation}
	\Theta(g',h,  \phi_{\bbA}) :=  \sum_{\lambda \in \calV^2} \omega(g', h)  \phi_{\bbA} (\lambda).
	\end{equation}

	\begin{lemma}
		As a function of $h$, the theta function $\Theta(g', h, \phi)$ is left $H(\bbQ) $ invariant, is absolutely convergent, and descends to a rapidly decreasing function on $H(\bbQ) \backslash H(\bbA)$.
		\begin{proof}
			The left invariance is immediate from definitions. To prove the convergence and rapid decrease, we make use of the ``mixed model" of the Weil representation. In general, suppose $V$ is a quadratic space of Witt rank $r$, decomposed as
			\begin{equation}
			V = V_{an} \stackrel{\perp}{\oplus} V_{r,r}
			\end{equation}
			where $V_{an}$ is anisotropic and $V_{r,r}$ is isomorphic to $r$ copies of the hyperbolic plane. Fix a standard basis
			\begin{equation}
			v_1', \dots, v_r', v_1'', \dots, v_r''
			\end{equation}
			where $\langle v_i', v_i'' \rangle = 1$ for $i = 1, \dots, r$, and all other inner products are zero. Setting $V' = \mathrm{span} \{  v_1', \dots, v_r'\}$,  the given basis yields an identification $(V')^k \simeq M_{k, r}(\bbQ)$ for any positive integer $k$ by the map
			\begin{equation}
			A = (a_{ij}) \in M_{k,r}(\bbQ)  \ \longleftrightarrow \  \left(   \sum_{i=1}^r a_{1i} v_i', \dots \sum_{i = 1}^r a_{ki} v_i'  \right).
			\end{equation}
			We may also identify $(V'')^k \simeq M_{k,r}(\bbQ)$ in a similar way.
			
			For $G' = \Mp_4$, the mixed model  of the Weil representation $\widehat \omega$ is an action  $G'_{\bbA} \times O(V)(\bbA)$ on $S(V(\bbA)^2) \simeq S(V_{an}(\bbA)^{2})  \otimes S( M_{2,r}(\bbA)) \otimes S(M_{2,r}(\bbA)) $. The two models are related by an intertwining map
			\begin{equation}	
			S(V(\bbA)^2) \stackrel{\sim}{\longrightarrow}  S(V_{an}(\bbA)^{2})  \otimes S( M_{2,r}(\bbA)) \otimes S(M_{2,r}(\bbA)), \qquad \varphi \mapsto \widehat \varphi,
			\end{equation}
			where, by definition,
			\begin{equation}
			\widehat \varphi(\lambda, A, B) \ := \ \int_{M_{2,r}(\bbA) }  \varphi(\lambda + X + B) \psi(\mathrm{tr}A^t X) dX,
			\end{equation}
			where $\psi \colon \bbA \to \bbC$ is the standard additive character that is trivial on $\bbQ$ and $\widehat{\bbZ}$, and we identify $V'(\bbA)^2 \simeq M_{2,r}(\bbA) \simeq V''(\bbA)^2$ as above.
			
			With this setup in place, we have the following criterion due to Kudla-Rallis:  let $\varphi \in V(\bbA)^2$ and suppose that
			\begin{equation}  \label{eqn:theta convergence criterion}
			\widehat \varphi( x, A, B) = 0 \qquad \text{whenever } \mathrm{rank}([A \, B]) < r.
			\end{equation}
			Then the proof of \cite[Proposition 5.3.1]{KudlaRallisRegSW} implies that $\Theta(g',h, \varphi)$ is absolutely convergent and rapidly decreasing as a function of $h \in H(\bbQ) \backslash H(\bbA)$.

			Returning to the case at hand, note that the Witt rank of $\calV$ is $r=1$, and we have an orthogonal basis
			\begin{equation}
			e =   \psm{1 & \\ & -1}, \  f_1 = \psm{0 &  0 \\ 1 & 0}, \ f_2 = \psm{ 0 & 1/N \\ 0 & 0 }
			\end{equation}
			with $\calV_{an} = \bbQ e$ and $\calV' = \bbQ f_1$, and $\calV'' = \bbQ f_2$. Applying the criterion above, it will suffice to show that
			\begin{equation}
			\widehat \phi( \lambda_{an}, 0, 0)  = \int_{(x_1, x_2) \in \bbR^2}  \phi(   a_1e + x_1 f_1, a_2 e + x_2 f_1)  \, dx_1 \, dx_2
			\end{equation}
			vanishes for all $\lambda_{an} = (a_1 e, a_2e) \in \calV_{an}(\bbR)^2$, for the Schwartz function $\phi$ defined in \eqref{eqn:Schwartz function phi}.
			
			We compute
			\begin{equation}
			R \left( a e + x f_1 , i \right) = R \left( \begin{pmatrix} a & \\ x & -a \end{pmatrix}, \ i \right)  = \frac{N(x^2 + 4 a^2)}{2}
			\end{equation}
			and $Q(a e + x f_1) = -N a^2$. Unwinding definitions, we find that
			\begin{align}
			\widehat \phi( \lambda_{an}, 0, 0) =  e^{ - 2 \pi N (a_1^2 + a_2^2)}  \int_{\bbR^2}  \left( - \frac{1}{\pi} + N( x_1^2 + x_2^2) \right) e^{- \pi N (x_1^2 + x_2^2) }\, dx_1 \, dx_2,
			\end{align}
			and a straightforward calculus exercise shows that the integral vanishes, as required.
		\end{proof}
	\end{lemma}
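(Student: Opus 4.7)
The plan is to handle the three assertions separately. Left $H(\mathbb Q)$-invariance is immediate: the set $\calV^2$ over which we sum is $H(\mathbb Q)$-stable, and the Weil representation action of $h \in H(\mathbb A)$ on $\phi_{\mathbb A}$ is the linear one $\phi_{\mathbb A}(h^{-1}\cdot -)$, so the change of variables $\lambda \mapsto h \lambda$ for $h \in H(\mathbb Q)$ leaves the sum unchanged.

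For convergence and rapid decrease, the approach I would take is to invoke the criterion of Kudla--Rallis from \cite{KudlaRallisRegSW}, specifically the proof of Proposition 5.3.1 there. That criterion says: in the mixed model of the Weil representation associated to a Witt decomposition $\calV = \calV_{an} \perp \calV_{r,r}$, if the partial Fourier transform $\widehat \varphi(\lambda, A, B)$ of a Schwartz function $\varphi$ vanishes whenever the matrix $[A\, B]$ has rank strictly less than the Witt rank $r$, then the associated theta integral is absolutely convergent and rapidly decreasing on $H(\mathbb Q)\backslash H(\mathbb A)$. Thus my task reduces to verifying this vanishing criterion for the specific Schwartz function $\phi_{\mathbb A} = \phi \otimes 1_{\widehat \calL^2}$.

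Next I would set up the decomposition explicitly. The quadratic space $\calV$ has signature $(1,2)$ and Witt rank $r=1$; a convenient basis adapted to a hyperbolic plane together with an anisotropic line is
\begin{equation}
e = \begin{pmatrix} 1 & \\ & -1\end{pmatrix}, \qquad f_1 = \begin{pmatrix} 0 & 0 \\ 1 & 0\end{pmatrix}, \qquad f_2 = \begin{pmatrix} 0 & 1/N \\ 0 & 0\end{pmatrix},
\end{equation}
so that $\calV_{an} = \mathbb Q e$ and $f_1,f_2$ span the hyperbolic plane. Since $1_{\widehat \calL^2}$ is a characteristic function of a lattice (and we only need rapid decrease, not absolute convergence at finite places alone), the verification reduces to the archimedean factor: I must check that
\begin{equation}
\widehat \phi(\lambda_{an}, 0, 0) = \int_{\mathbb R^2} \phi(a_1 e + x_1 f_1,\, a_2 e + x_2 f_1)\, dx_1\, dx_2 = 0
\end{equation}
for every $\lambda_{an} = (a_1 e, a_2 e)$.

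The hard part, and the main obstacle, is showing this vanishing — but it is really just a Gaussian integral once the majorant $R$ is evaluated on the relevant vectors. Using $R(a e + x f_1, i) = \tfrac{N}{2}(x^2 + 4a^2)$ and $Q(a e + x f_1) = -Na^2$, plugging into the explicit formula \eqref{eq:psi0} for $\psi^o$, the integrand factors and $\widehat\phi(\lambda_{an}, 0, 0)$ becomes
\begin{equation}
e^{-2\pi N(a_1^2 + a_2^2)} \int_{\mathbb R^2} \Bigl(-\tfrac{1}{\pi} + N(x_1^2 + x_2^2)\Bigr)\, e^{-\pi N(x_1^2 + x_2^2)}\, dx_1\, dx_2.
\end{equation}
A direct evaluation — $\int_{\mathbb R} e^{-\pi N x^2}\, dx = N^{-1/2}$ and $\int_{\mathbb R} N x^2 e^{-\pi N x^2}\, dx = \tfrac{1}{2\pi} N^{-1/2}$, and adding the two $x_i$ contributions — shows that the bracketed expression integrates to zero. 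With the Kudla--Rallis vanishing criterion satisfied, absolute convergence and rapid decrease on $H(\mathbb Q)\backslash H(\mathbb A)$ follow.
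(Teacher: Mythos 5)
Your proposal is correct and follows essentially the same route as the paper: left invariance by direct inspection, then the Kudla--Rallis mixed-model criterion with $r=1$, the same basis $e, f_1, f_2$, the same computations of $R$ and $Q$, and the same Gaussian integral (which you evaluate explicitly rather than leaving it as an exercise). The only minor imprecision is the stated reason for reducing to the archimedean factor — the right justification is simply that $\widehat\phi_{\mathbb A}$ factors over places, so vanishing of the archimedean factor at $(A,B)=(0,0)$ already forces the product to vanish — but that does not affect the correctness of the argument.
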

	
	\begin{corollary}[{\cite{KudlaRallisRegSW}}] \label{cor:Siegel Weil nu} Let $\Phi_{\phi} \in I_2(s, \chi_{\calV})$ denote the standard section attached to $\phi_{\bbA}$, and let $E(g, s, \Phi_{\phi})$ denote the corresponding Siegel Eisenstein series. Then
		\begin{equation}
		2  	\int_{[H]} \Theta(g,h,\phi_{\bbA}) dh =  E(g,  0, \Phi_{\phi})
		\end{equation}
		where $dh$ is the Haar measure on $[H] = H(\bbQ) \backslash H(\bbA)$ giving total volume 1.  \qed
	\end{corollary}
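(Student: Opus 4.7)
The plan is to invoke the extension of the Siegel--Weil formula established by Kudla and Rallis in \cite{KudlaRallisRegSW}. For the dual reductive pair $(O(\calV), \Sp_2)$ we have $\dim \calV = 3$, Witt rank $r=1$, and $n=2$, so we are in the boundary case $\dim \calV = n+1$ of Weil's original convergence criterion ($m - r = 2$ is not strictly greater than $n = 2$). Consequently, the theta integral $I(g, \varphi) = \int_{[H]} \Theta(g,h,\varphi)\,dh$ fails to converge for a generic Schwartz function, and one ordinarily needs to pass to a regularized theta integral. However, the preceding lemma proves that our specific Schwartz function $\phi_{\bbA} = \phi \otimes 1_{\widehat \calL^2}$ satisfies the Kudla--Rallis vanishing criterion \eqref{eqn:theta convergence criterion}, so that $\Theta(g', h, \phi_{\bbA})$ is absolutely convergent and rapidly decreasing on $H(\bbQ) \backslash H(\bbA)$, making $I(g, \phi_{\bbA})$ a bona fide smooth function on $G_{2,\bbA}$ without any need for regularization.

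Next I would identify the location of the Siegel--Weil point. For the Eisenstein series $E(g, s, \Phi_\phi)$ in our normalization, $\rho = (r+1)/2 = 3/2$ and the Siegel--Weil point is $s_0 = (m - n - 1)/2 = 0$, which coincides with the centre of symmetry. The main input is then the Kudla--Rallis identity \cite[Theorem 4.1]{KudlaRallisRegSW} (or its analogue for the convergent boundary case), which asserts that whenever the theta integral $I(g, \varphi)$ converges, the Eisenstein series $E(g, s, \Phi_\varphi)$ attached to $\varphi$ via the Rallis map is holomorphic at $s = s_0$ and satisfies
\begin{equation}
E(g, s_0, \Phi_\varphi) = 2\, I(g, \varphi).
\end{equation}
The factor of $2$ is the ratio of Tamagawa numbers arising from replacing $O(\calV)$ by $SO(\calV)$ (equivalently, from the nontrivial centre $\{\pm 1\} \subset O(\calV)$); the normalization of $dh$ giving $\mathrm{vol}([H]) = 1$ together with the fact that $\phi_{\bbA}$ is $\mu_2(\bbA_f)$-invariant ensures that this is exactly the constant that appears. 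By construction $\Phi_\phi$ is the standard section attached to $\phi_{\bbA}$, so applying the above identity at $s_0 = 0$ yields the claim.

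The main obstacle has already been cleared by the preceding lemma: namely, the verification that $\widehat\phi(\lambda_{\mathrm{an}}, 0, 0) = 0$ for all $\lambda_{\mathrm{an}} \in \calV_{\mathrm{an}}(\bbR)^2$, which is precisely the hypothesis required to invoke \cite[Proposition 5.3.1]{KudlaRallisRegSW}. Once that hypothesis is in hand, the identity is a direct application of their theorem, and no further computation is needed. A possible subtlety worth double-checking is that the archimedean component $\phi$ behaves correctly under the Weil representation so that $\Phi_\phi$ is indeed the standard section of scalar $\tilde K_\infty$-type built from $\phi$ via the Rallis map at $s = 0$; this is routine once one unwinds the definition of $\Phi_\phi(g,s) = (\omega(g)\phi_{\bbA})(0) \cdot |a(g)|^{s-s_0}$ on the Siegel parabolic.
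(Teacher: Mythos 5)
Your proposal takes essentially the same route as the paper: invoke the preceding lemma to verify the Kudla--Rallis convergence criterion for the specific Schwartz function $\phi_{\bbA}$, identify the Siegel--Weil point $s_0 = (m-n-1)/2 = 0$, and then cite the extended Siegel--Weil formula. This matches the paper's treatment, which gives no proof beyond the citation to \cite{KudlaRallisRegSW}.

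However, your explanation of the constant $2$ is wrong, and this is worth flagging because the paper itself is careful here. You attribute the factor of $2$ to ``the ratio of Tamagawa numbers arising from replacing $O(\calV)$ by $SO(\calV)$,'' but no such replacement takes place in the statement: both sides use the integral over $[H] = [O(\calV)]$ with $dh$ normalized to total volume $1$, and the $\mu_2$-invariance of $\phi_{\bbA}$ plays no role in the formula as stated. The factor of $2$ is instead a genuine feature of the Siegel--Weil formula in the ``first term range'' $m \le n+1$ (here $m=3=n+1$, the boundary case): Kudla--Rallis only prove that the two sides are proportional, and the precise value $2$ of the constant in this range is a separate, nontrivial result, established in \cite[Theorem 7.3(ii)]{GanQiuTakeda}, as the remark in the paper immediately following the corollary points out. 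Your heuristic would, if applied consistently, also predict a factor of $2$ in Weil's convergent range (where the constant is in fact $1$), so it cannot be the correct mechanism. The fix is simply to replace your Tamagawa-number explanation with the citation to Gan--Qiu--Takeda for the constant of proportionality.
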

	\begin{remark} Strictly speaking, \cite{KudlaRallisRegSW} asserts that the two sides are proportional; the constant of proportionality is shown to be 2 in \cite[Theorem 7.3(ii)]{GanQiuTakeda}, see also the references therein. \end{remark}
	
	\begin{theorem} Suppose $T$ is of signature $(1,1)$ or $(0,2)$. Then
		\[
		\widehat{\deg}  \, \widehat{\mathcal Z}(T,v) =  \frac{\prod_{q|N}(q+1)}{24}   C_{T}'(v,0, \Phi^{\calL}).
		\]
		
		\begin{proof}
			By definition,  the  degree is
			\begin{equation}
			\widehat \deg \, \widehat{ \mathcal Z}(T,v)  \ = \ \frac12  \int_{\calX(\bbC)} \Xi(T,v) .
			\end{equation}
			
			With the Siegel-Weil formula for the Schwartz function $\phi_{\bbA}$, \Cref{cor:Siegel Weil nu}, in hand, the proof of \cite[Theorem 5.10]{GarciaSankaran} goes through verbatim; in our setting, this formula reads:
			\begin{equation}
			\int_{\calX(\bbC)} \Xi(T,v) =   \mathrm{vol}(\calX(\bbC), d \Omega)   \ C'_T(v,0, \Phi^{\calL})
			\end{equation}
			where $d \Omega =\frac{1}{2\pi} du dv / v^2$. The volume is given explicitly by the well-known formula
			\begin{equation}
			\mathrm{vol}(\calX(\bbC), d \Omega) = \frac{1}{ 2} \int_{\Gamma_0(N) \backslash \bbH}  \frac{du \, dv}{ 2 \pi v^2} =  \frac12 \cdot \frac{\prod_{q|N}(q+1)}{6},
			\end{equation}
			where the factor $\frac12$ emerges from the action of the group $\{\pm 1 \}$, cf. \Cref{rmk:stacky 1/2}. The theorem follows immediately.
		\end{proof}
	\end{theorem}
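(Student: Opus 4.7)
Since $T$ has indefinite signature, \Cref{lemma:support of Z(T)} forces $\calZ(T)=\emptyset$, so $\widehat\calZ(T,v) = (0,\Xi(T,v))$ is purely archimedean and
$$\widehat{\deg}\,\widehat\calZ(T,v) \ = \ \frac{1}{2}\int_{\calX(\bbC)} \Xi(T,v),$$
the factor $\tfrac12$ coming from the stacky $\{\pm 1\}$ as in \Cref{rmk:stacky 1/2}. The plan is to recognize this archimedean integral as a Fourier coefficient of an integrated theta form, apply the Siegel--Weil formula of \Cref{cor:Siegel Weil nu} to convert it into a Fourier coefficient of the derivative of $E(\tau,s,\Phi^{\calL})$, and then collect the explicit constant from a volume computation.

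\textbf{From $\Xi(T,v)$ to the Eisenstein series.} By construction of $\Xi(T,v)$ via the Schwartz form $\psi^\circ$ from \cite{GarciaSankaran}, the sum $\sum_{x\in\calL^2, T(x)=T}\Psi(xa,\tau)$ is exactly the $T$-th Fourier coefficient in the $g$-variable of the theta integral
$$g \ \longmapsto \ \int_{[H]}\Theta(g,h,\phi_\bbA)\,dh,$$
for $\phi_\bbA = \phi \otimes 1_{\widehat\calL}$. By \Cref{cor:Siegel Weil nu} this equals $\tfrac12 E(g,0,\Phi_\phi)$, and one checks from \Cref{def:calL global section} that $\Phi_\phi = \Phi^{\calL}$: at the archimedean place $\phi$ is (by inspection) the Gaussian of weight $3/2$ producing $\Phi^{3/2}_\infty$, while at the finite places $\phi_\bbA$ is the characteristic function of $\widehat\calL^2$. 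Because $\calV$ does not represent $T$ at the archimedean place, $W_{T,\infty}(g,0,\Phi^{\calL}_\infty)=0$, so $C_T(v,0,\Phi^{\calL})=0$ and the $s$-derivative carries all the information. With these ingredients the argument of \cite[Thm.\ 5.10]{GarciaSankaran} transcribes verbatim to produce
$$\int_{\calX(\bbC)}\Xi(T,v) \ = \ \mathrm{vol}\bigl(\calX(\bbC),d\Omega\bigr)\cdot C'_T(v,0,\Phi^{\calL}),$$
where $d\Omega = \tfrac{1}{2\pi}v^{-2}du\,dv$.

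\textbf{Volume and assembly.} Using the classical formula $\int_{\Gamma_0(N)\backslash\bbH} v^{-2}\,du\,dv = \tfrac{\pi}{3}\prod_{q|N}(q+1)$ and the stacky $\tfrac12$ yields $\mathrm{vol}(\calX(\bbC),d\Omega) = \tfrac{1}{12}\prod_{q|N}(q+1)$. Combining with the initial $\tfrac12$ from the degree map gives
$$\widehat{\deg}\,\widehat\calZ(T,v) \ = \ \frac12 \cdot \frac{\prod_{q|N}(q+1)}{12} \cdot C'_T(v,0,\Phi^{\calL}) \ = \ \frac{\prod_{q|N}(q+1)}{24}\,C'_T(v,0,\Phi^{\calL}),$$
as claimed.

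\textbf{Main obstacle.} The only serious technical point is that $\calV$ has Witt rank one, so the theta integral defining the Siegel--Weil identity is \emph{a priori} not absolutely convergent and the classical Siegel--Weil formula of Weil does not apply. This is precisely what the lemma preceding \Cref{cor:Siegel Weil nu} handles: using the Kudla--Rallis criterion \eqref{eqn:theta convergence criterion} in the mixed model, convergence and rapid decrease reduce to the vanishing of the partial Fourier transform $\widehat\phi(\lambda_{an},0,0)$, which in turn reduces to the Gaussian calculation $\int_{\bbR^2}\bigl(-\tfrac{1}{\pi}+N(x_1^2+x_2^2)\bigr)e^{-\pi N(x_1^2+x_2^2)}dx_1 dx_2 = 0$. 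Once convergence is in place, the regularized Siegel--Weil of Kudla--Rallis applies, the constant of proportionality $2$ is pinned down by \cite{GanQiuTakeda}, and the proof is just the volume bookkeeping described above.
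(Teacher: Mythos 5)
Your proposal follows the same route as the paper: the degree is $\tfrac12\int_{\calX(\bbC)}\Xi(T,v)$, convergence of the theta integral is settled by the Kudla--Rallis criterion in the mixed model (the lemma preceding \Cref{cor:Siegel Weil nu}, including the vanishing of the Gaussian integral), the regularized Siegel--Weil formula with proportionality constant $2$ is invoked, the identity $\int_{\calX(\bbC)}\Xi(T,v)=\mathrm{vol}(\calX(\bbC),d\Omega)\cdot C_T'(v,0,\Phi^{\calL})$ is imported verbatim from \cite[Theorem 5.10]{GarciaSankaran}, and the volume bookkeeping with the stacky factor $\tfrac12$ yields $\prod_{q|N}(q+1)/24$, exactly as in the paper.

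One side remark in your write-up is incorrect and should be deleted: the standard section $\Phi_{\phi}$ attached to $\phi_{\bbA}=\phi\otimes 1_{\widehat\calL^2}$ is \emph{not} equal to $\Phi^{\calL}$. At the archimedean place, $\phi(\lambda)=\psi(\lambda,i)$ is the Gaussian of the majorant multiplied by a nontrivial polynomial factor, and it is a Schwartz function on the signature $(1,2)$ space $\calV_{\bbR}$; the resulting global datum is coherent. By contrast $\Phi^{\calL}_{\infty}=\Phi^{3/2}_{\infty}$ is the weight $3/2$ section attached to a positive definite space, and $\Phi^{\calL}$ is incoherent, so $E(g,0,\Phi^{\calL})=0$. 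If your identification $\Phi_{\phi}=\Phi^{\calL}$ were correct, \Cref{cor:Siegel Weil nu} would force the theta integral of $\phi_{\bbA}$ to vanish identically, which is not what the argument needs. The genuine link between the theta integral of $\phi_{\bbA}$ (evaluated via Siegel--Weil at $s=0$ for the coherent section $\Phi_{\phi}$) and the central \emph{derivative} $C_T'(v,0,\Phi^{\calL})$ is precisely the archimedean content of \cite[Theorem 5.10]{GarciaSankaran}, which you do invoke; since you defer to that theorem for the final formula, your proof still reaches the correct conclusion, but the parenthetical ``$\Phi_{\phi}=\Phi^{\calL}$ by inspection'' is false and, taken literally, would derail the computation.
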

	
	\subsection{$T$ of rank one}

	Here, we follow \cite[\S 5.8]{KRYbook}; see also \cite[\S 5.2]{GarciaSankaran} for a more general discussion. We will make use of Eisenstein series in genus one and two, see \Cref{sec:main theorem setup} for the notation.
	
	We begin with the observation that there is an embedding
	\begin{equation}
	\eta \colon G_{1,\bbA} \to G_{2,\bbA}, \qquad  \left[ \begin{pmatrix} a & b \\ c & d \end{pmatrix}, z \right] \mapsto \left[ \begin{pmatrix} 1 & & 0 & \\ & a & & b  \\ 0 & &  1 & \\ & c & & d \end{pmatrix}, z \right]
	\end{equation}
	which induces a map
	\begin{equation} \label{eqn:eta defn}
	\eta^*  \colon  I_{2}(s, \chi_{\calV}) \to I_{1}(s+\tfrac12, \chi_{\calV})
	\end{equation}
	via pullback. By the same formula, we have an embedding $\eta_v \colon G_{1, v} \to G_{2,v}$ at each place $v$, inducing a pullback $\eta^*_v \colon I_{2, v}(s, \chi_{\calV}) \to I_{1, v}(s+\tfrac12, \chi_{\calV})$.

	\begin{lemma}[{\cite[Lemma 5.4]{GarciaSankaran}}]  \label{lem:GS Eis FC rk 1}
		Suppose
		\begin{equation}
		T = \begin{pmatrix} 0 & \\ & t \end{pmatrix}.
		\end{equation}
		with $t \neq 0$.
		Then for any $\Phi \in I_2(\chi, s)$ and $g \in G_{2,\bbA}$, we have
		\begin{equation}  \label{eqn:Eisenstein FC rank 1}
		E_T(g, s, \Phi) =  W_t(e, s+\tfrac12,  \left(\eta^* \circ r(g)\right) \Phi) + W_T(g, s, \Phi)
		\end{equation}
		\qed
	\end{lemma}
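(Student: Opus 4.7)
The plan is to compute $E_T(g,s,\Phi)$ by unfolding in the standard way, using the double coset decomposition of $P_2 \backslash \Sp_4 / N_2$, and then to identify the surviving orbits by their interaction with the character $\psi_T$ associated to our rank-one $T$.

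First, I would start in the region of absolute convergence $\mathrm{Re}(s) > 3/2$. Writing
\[
E_T(g,s,\Phi) \ = \ \int_{N_2(\bbQ)\backslash N_2(\bbA)} \sum_{\gamma \in P_{2,\bbQ} \backslash \Sp_4(\bbQ)} \Phi(\gamma n g, s)\, \psi(-\mathrm{tr}(T b_n))\, dn,
\]
where $n = n(b_n)$, I would interchange sum and integral and break the $\gamma$-sum by $N_2(\bbQ)$-orbits on the right. By the standard Bruhat decomposition for $\Sp_4$ relative to the Siegel parabolic, representatives may be chosen as $w_r$ for $r=0,1,2$, where $w_0 = e$, $w_2$ is the long Weyl element $\psm{& -1_2 \\ 1_2 & }$, and $w_1$ is the ``intermediate'' Weyl element
\[
w_1 \ = \ \begin{pmatrix} 1 & & & \\ & 0 & & -1 \\ & & 1 & \\ & 1 & & 0 \end{pmatrix}
\]
which swaps the second ``diagonal block'' while fixing the first. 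The stabilizer in $N_2$ of $w_r$ (i.e.\ $N_2 \cap w_r^{-1} P_2 w_r$) is a subgroup of $N_2$ of codimension $r(r+1)/2$ in $N_2$; for $w_1$ it is the one-parameter subgroup consisting of $n(b)$ with $b_{22}=0$. After folding the orbit decomposition back in, I obtain
\[
E_T(g,s,\Phi) \ = \ \sum_{r=0}^{2} \int_{(N_2 \cap w_r^{-1} P_2 w_r)(\bbQ) \backslash N_2(\bbA)} \Phi(w_r n g, s)\, \psi(-\mathrm{tr}(Tb_n))\, dn.
\]

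Next I would analyze each term. For $r=0$, the integrand is $\Phi(ng,s)\psi(-\mathrm{tr}(Tb_n))$, and integrating in $n$ along $N_2(\bbQ)\backslash N_2(\bbA)$ gives zero because $\Phi(ng,s) = \Phi(g,s)$ is $N_2$-invariant while $\psi \circ (-\mathrm{tr}\, T\cdot)$ is a nontrivial character (since $t\neq 0$). For $r=2$, the stabilizer is trivial, and the resulting integral over $N_2(\bbA)$ is by definition $W_T(g,s,\Phi)$. The heart of the argument is $r=1$.

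For the $w_1$ contribution, I would parametrize $n = n(b)$ with $b = \psm{x & y \\ y & z}$. The stabilizer of $w_1$ in $N_2$ is $\{ n(b) : z = 0\}$, so the residual integral runs over $(y,z)$ with $z \in \bbQ\backslash \bbA$ and $y\in\bbA$, while $x$ integration happens over $\bbQ\backslash \bbA$ against the character $\psi(-t_{11} x)$. Since $t_{11}=0$, the $x$-integral is trivial; the remaining $y$-integral and $z$-integral combine to give precisely a genus-one Whittaker integral of the $\eta^*$-pullback. Here one uses the identity
\[
w_1 \cdot n(b) \cdot g  \ =\  \text{(element of }P_2) \cdot \eta(w_1^{(1)} n_1(z)) \cdot g,
\]
where $w_1^{(1)}$ is the long Weyl element in $\Sp_2$ and the identification is set up so that the integration over $z$ against $\psi(-tz)$ matches $W_t$. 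Concretely, evaluating the standard-section transformation property of $\Phi$ under $\eta$ and collapsing the $x$- and $y$-integrals (which become trivial due to $t_{11}=0=t_{12}$) yields exactly $W_t(e, s+\tfrac12, \eta^*(r(g)\Phi))$; the shift $s\mapsto s+\tfrac12$ arises from the modulus character difference between $P_2$ in $\Sp_4$ and $P_1$ in $\Sp_2$.

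The main obstacle will be the $r=1$ bookkeeping: carrying out the Bruhat factorization $w_1 n(b) g = p \cdot \eta(g_1) \cdot g$ with the right modulus, and verifying that the $s$-shift and the character $\chi_{\calV}$ behave correctly under the pullback $\eta^*$. Once this is pinned down, summing the three contributions and invoking meromorphic continuation in $s$ gives the identity stated. This argument is essentially the one given in \cite[Lemma~5.4]{GarciaSankaran}.
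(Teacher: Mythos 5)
The paper itself gives no proof here — the lemma is stated with a closing \qed and cited to \cite[Lemma~5.4]{GarciaSankaran} — so the only thing to evaluate is the internal coherence of your sketch. Your choice to unfold $E_T$ over the Bruhat stratification of $P_2 \backslash \Sp_4$ is a legitimate route to the identity, but there is a genuine gap in the orbit bookkeeping, and it happens to be exactly the point at which the hypothesis $t \neq 0$ must be used.

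Your displayed formula
\[
E_T(g,s,\Phi) \ = \ \sum_{r=0}^{2} \int_{(N_2 \cap w_r^{-1} P_2 w_r)(\bbQ) \backslash N_2(\bbA)} \Phi(w_r n g, s)\, \psi(-\mathrm{tr}(Tb_n))\, dn
\]
presumes that each Bruhat cell $P_2 w_r P_2$ contributes a single $N_2(\bbQ)$-orbit to $P_2(\bbQ)\backslash \Sp_4(\bbQ)$. This is true for $r=0$ and $r=2$ (where $w_r$ normalizes $M_2$), but it fails for $r=1$: the middle Schubert cell in the Lagrangian Grassmannian has dimension $2$, while the $N_2$-orbit of $w_1$ has dimension $\dim N_2 - \dim(N_2\cap w_1^{-1}P_2w_1) = 1$. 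Concretely, the $N_2(\bbQ)$-orbits in that cell are represented by $w_1\, m(a)$ with $a$ running over $B(\bbQ)\backslash \GL_2(\bbQ)$ for the appropriate Borel $B$, i.e.\ a copy of $\mathbb{P}^1(\bbQ)$. This is exactly why the paper's own $T=0$ computation (cf.\ \eqref{eqn:E0 decomposition}) has a full sum over $\gamma\in\Gamma_\infty\backslash\mathrm{SL}_2(\bbZ)$ in the middle term, not a single summand. What rescues the lemma is that once you integrate against $\psi(-\mathrm{tr}(Tb))$ with $T = \psm{0 & \\ & t}$ and $t\neq 0$, the character restricted to the stabilizer of $w_1 m(a)$ — namely $\{n(b) : ({}^t a\, b\, a)_{22} = 0\}$ — is nontrivial unless ${}^t a E_{22} a$ is proportional to $E_{22}$, which forces $a_{21}=0$, i.e.\ $a \in B(\bbQ)$. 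So all but the identity orbit vanish after taking the $T$-Fourier coefficient. That argument is the crux of the $r=1$ analysis, and it is missing from your sketch; as written, the displayed formula is simply false for general $T$ and only becomes true after this vanishing step.

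There is also a small slip in the $r=1$ parametrization: the stabilizer is $\{n(b):b_{22}=0\}$, so in the quotient $\mathrm{Stab}(\bbQ)\backslash N_2(\bbA)$ the coordinates $b_{11}$ and $b_{12}$ range over $\bbQ\backslash\bbA$ (and integrate to volume $1$, using that $P_2$-left-invariance of $\Phi$ under the unipotent $w_1 n\bigl(\psm{x&y\\ y&0}\bigr) w_1^{-1}$), while $b_{22}$ ranges over the full $\bbA$ and supplies the genus-one Whittaker integral. You have the roles of $y$ and $z$ reversed. Once these points are corrected, the identification of the $w_1$ contribution with $W_t(e, s+\tfrac12, (\eta^*\circ r(g))\Phi)$ — using $w_1 = \eta(w_1^{(1)})$ and the shift $s\mapsto s+\tfrac12$ coming from the ratio of modulus characters — and of the $w_2$ contribution with $W_T(g,s,\Phi)$ goes through as you describe.
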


	Let's compute these Whittaker functionals more explicitly, in the case that
	\begin{equation}
	\Phi = \Phi_2^{\calL} \in I_2(\chi, s)
	\end{equation}
	is the incoherent section from \Cref{def:calL global section} above. We begin with a simple lemma:
	
	\begin{lemma} \label{lem:pullback on Rallis sections}
		Suppose $V_v$ is a local quadratic space of dimension $m$, and $\varphi_1, \varphi_1' \in S(V_v)$. Let $\varphi_2 = \varphi_1 \otimes \varphi_1' \in S(V_v^{\oplus 2})$, and let $\Phi_2(s) \in I_{2,v}(s, \chi_V)$ and $\Phi_1'(s)  \in I_{1,v}(s, \chi_V)$ denote the  standard sections corresponding to $\varphi_2$ and $\varphi'_1$ respectively. Then
		\begin{equation}
		\eta^* \Phi_2(s) = \varphi_1(0) \cdot \Phi'_1(s+\tfrac12)
		\end{equation}
		
		\begin{proof}
			Let $\omega_i$ denote the Weil representation on $S(V^{\oplus i})$ for $i = 1, 2$. A direct calculation, using the explicit formulas in e.g.\ \cite[Lemma 8.6.5]{KRYbook}, yields the identity
			\begin{equation}
			\left[	\omega_2( \eta(g)) \varphi_2\right](\mathbf v)  = \varphi_1(v) \cdot \left[ \omega_1(g) \varphi_1' \right](v'), \qquad \mathbf v = (v_1, v_1') \in V^{\oplus 2}
			\end{equation}
			for every $g \in G_{1,v}$.
			
			On the other hand, if $g_2 = n(B) m(A) k \in G_{2,v}$, then by definition,
			\begin{equation}
			\Phi_2(g_2,s) = |\det (A)|^{s - \frac{m-3}2} \cdot \left[ \omega_2(g_2) \varphi_2 \right](0)
			\end{equation}
			and for $g_1 = n(b) m(a) k$, we have
			\begin{equation}
			\Phi'_1(g_1,s) = |\det (a)|^{s - \frac{m-2}2} \cdot \left[ \omega_1(g_1) \varphi_1' \right](0)
			\end{equation}
			The lemma follows easily.
		\end{proof}
	\end{lemma}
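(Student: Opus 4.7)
The plan is to reduce the identity to a computation with the local Weil representation by unwinding the definitions of the standard sections. The key structural fact is that the embedding $\eta$ places an identity block in the ``first slot'', so when $\omega_2(\eta(g))$ acts on a pure tensor $\varphi_1 \otimes \varphi_1'$, the first factor should be unchanged and the second should transform by $\omega_1(g)$.

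First, I would verify the Schwartz-function identity
\[
[\omega_2(\eta(g)) \varphi_2](v_1, v_1') \ = \ \varphi_1(v_1) \cdot [\omega_1(g) \varphi_1'](v_1'), \qquad g \in G_{1,v}, \ (v_1, v_1') \in V_v^{\oplus 2}.
\]
It suffices to check this on generators of $G_{1,v}$ (the Siegel parabolic and the Weyl element) using the explicit formulas in, e.g., \cite[Lemma 8.6.5]{KRYbook}. Concretely, $\eta(n(b)) = n(\psm{0 & 0 \\ 0 & b})$ and $\eta(m(a)) = m(\psm{1 & 0 \\ 0 & a})$, so the quadratic character of the unipotent and the rescaling of the Levi only see the second variable; the Weyl element case is handled by a partial Fourier transform in that second variable. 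In each case one reads off the identity directly.

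Second, I would use the standard-section formula: for $\varphi \in S(V_v^{\oplus r})$ with associated standard section $\Phi(s) \in I_{r,v}(s, \chi_V)$, and $g = n(B) m(A) k$ in Iwasawa form, one has
\[
\Phi(g, s) \ = \ |\det A|_v^{s - s_0} \cdot [\omega(g)\varphi](0), \qquad s_0 = \tfrac{m-r-1}2.
\]
This is forced by agreement with the Rallis section at $s = s_0$ together with the $\tilde K_v$-invariance of the standard extension.

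Finally, for $g = n(b) m(a) k \in G_{1,v}$ one has $\eta(g) = n(B') m(\psm{1 & 0 \\ 0 & a}) \eta(k)$ with $|\det \psm{1 & 0 \\ 0 & a}|_v = |a|_v$. Plugging into the formula and evaluating the factorization identity at $(v_1, v_1') = (0, 0)$ gives
\[
\eta^* \Phi_2(g, s) \ = \ |a|_v^{\,s - (m-3)/2} \cdot \varphi_1(0) \cdot [\omega_1(g) \varphi_1'](0) \ = \ \varphi_1(0) \cdot \Phi'_1(g, s + \tfrac12),
\]
since $s - \tfrac{m-3}2 = (s + \tfrac12) - \tfrac{m-2}2$. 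I do not anticipate any genuine obstacle: the proof is pure bookkeeping, and the only subtlety is matching the shift from $s$ to $s + \tfrac12$, which appears precisely because the Rallis point for the rank $2$ degenerate principal series sits $\tfrac12$ below that for the rank $1$ series.
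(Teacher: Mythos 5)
Your proposal is correct and takes essentially the same route as the paper: both proofs rest on (i) the factorization identity $[\omega_2(\eta(g))\varphi_2](v_1,v_1') = \varphi_1(v_1)\,[\omega_1(g)\varphi_1'](v_1')$ and (ii) the formula $\Phi(g,s) = |\det A|_v^{s-s_0}\,[\omega(g)\varphi](0)$ with $s_0 = \tfrac{m-r-1}{2}$ relating a standard section to its Rallis-point value in Iwasawa coordinates, together with the bookkeeping $s - \tfrac{m-3}{2} = (s+\tfrac12) - \tfrac{m-2}{2}$. You spell out the generator-by-generator verification of (i) and the Iwasawa decomposition of $\eta(g)$ a bit more explicitly than the paper, which simply cites the formulas in \cite[Lemma 8.6.5]{KRYbook} and concludes that ``the lemma follows easily,'' but the argument is the same.
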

	
	Turning to the archimedean place, suppose
	\begin{equation}
	v = \psm{ v_1 & \\ & v_2 } \in \Sym_2(\bbR)_{>0},
	\end{equation}
	and let $g_{v, \infty} \in G_{2,\infty}$ denote the corresponding group element as in \eqref{eqn:g_tau def}. Let $\Phi_{2, \infty}^{3/2} \in I_{2,\infty}(s, \chi_{\calV})$ and $\Phi_{1,\infty}^{3/2} \in I_{1, \infty}(s, \chi_{\calV})$ denote the weight $3/2$ sections in genus 2 and 1, respectively. Then a straightforward computation, cf.\ \cite[p.\ 956]{GarciaSankaran}, yields
	\begin{equation} \label{eqn:arch section pullback}
	\left( \eta^* \circ r(g_{v,\infty}) \right) \Phi^{3/2}_{2, \infty}(s) =  v_1^{s/2 + 3/4} \cdot \left( r(g_{v_2,\infty}) \Phi^{3/2}_{1, \infty}(s+1/2) \right).
	\end{equation}

	Now consider the global standard sections $\Phi_1^{\calL} \in I_1(s, \chi_{\calV})$ and  $\Phi_2^{\calL} \in I_2(s, \chi_{\calV})$ as in \Cref{def:calL global section}.

	Then the above discussion implies that for $ v=diag(v_1, v_2)$ as above, and setting
	\begin{equation}
	g_v = (g_{v,\infty}, e,\dots) \in G_{2,\bbA}, \qquad g_{v_2} = (g_{v_2, \infty}, e, \dots)  \in G_{1,\bbA}
	\end{equation}
	we have
	\begin{equation}
	\eta^* \Phi_2^{\calL}(g_v,s) = v_1^{s/2 + 3/4} \, \Phi_1^{\calL}(g_{v_2}', s+\tfrac12)
	\end{equation}
	and hence
	\begin{equation} \label{eqn:whittaker pullback global}
	W_{t}\left( e, s+\tfrac12, \left[ \eta^* \circ r(g_{v}) \right] \Phi_2^{\calL} \right) = v_1^{s/2+3/4} \,   W_t\left( g_{v_2}, s+\tfrac12 , \Phi_1^{\calL} \right).
	\end{equation}
	
	We now turn  to the second term in \eqref{eqn:Eisenstein FC rank 1}; to this end, recall the identities
	\begin{equation}
	W_{T,p}(e, s, \Phi^{\calL}_2) = \gamma_p(\calV)^2 \cdot [\calL_p^{\vee}:\calL_p]^{-1} \cdot |2|_p^{\frac12} \cdot \alpha_p(X,T, \calL)|_{X = p^{-s}}
	\end{equation}
	and
	\begin{equation}	 \label{eqn:whittaker rep dens rank 1}
	W_{t, p}(e, s+ \tfrac12, \Phi^{\calL}_1) = \chi_{\calV,p}(-1)\cdot \gamma_p(\calV) \cdot [\calL_p^{\vee}:\calL_p]^{-\tfrac12} \cdot \alpha_p(X,t, \calL)|_{X = p^{-s}},
	\end{equation}
	cf.\ \cite[Lemma 5.7.1]{KRYbook}. Here
	\begin{equation}
	\calL_p^{\vee} =
	\left\{ x \in \calV_p \ | \ \langle x, y \rangle \in \bbZ_p \text{ for all } y \in \calL_p \right\}
	\end{equation}
	is the dual lattice with respect to the bilinear form $\langle x, y \rangle$ with $\langle x, x \rangle = 2 Q(x) = 2 N \det (x)$, so, recalling that $N$ is odd and squarefree, we have
	\begin{equation}
	[\calL^{\vee}_p: \calL_p] =
	\begin{cases}
	p, & \text{ if } p |2N \\
	1, & \text{ otherwise.}
	\end{cases}
	\end{equation}
	
	For our fixed integer $t \neq 0$, write
	\begin{equation}
	4 Nt = c^2 d
	\end{equation}
	where $-d$ is a fundamental discriminant. Let $\chi_{-d}$ denote the quadratic Dirichlet character attached to the field $k_d := \bbQ(\sqrt{-d})$.
	
	\begin{lemma} \label{lem:Whittaker comparison local} Suppose  $T = \psm{0 & \\ & t}$.
		\begin{enumerate}[(i)]
			\item	If $p \nmid N$ , then
			\begin{equation}
			\frac{W_{T,p}(e, s, \Phi^{\calL}_{2,p})}{W_{t,p}(e, -s + \tfrac12, \Phi_{1,p}^{\calL})} =  \gamma_p(\calV) \, \chi_{\calV, p}(-1) \, |2|_p   \frac{ \zeta_p(-2s+2)}{\zeta_p(2s + 2)} \cdot \frac{ L_p(s, \chi_{-d})}{L_p(1-s, \chi_{-d})} \cdot |c|_p^{2s - 1} \cdot
			\end{equation}
			where $\zeta_p(s) = (1- p^{-s})^{-1}$ and $L_p(s, \chi_{-d}) = (1 - \chi_{-d}(p) p^{-s})^{-1}$ as usual.
			
			\item If $p|N$, then
			\begin{equation}
			\frac{W_{T,p}(e, s, \Phi^{\calL}_{2,p})}{W_{t,p}(e, -s + \tfrac12, \Phi_{1,p}^{\calL})} = - \gamma_p(\calV) \chi_{\calV,p}(-1) \frac{ \zeta_p(s-1)}{\zeta_p(s+1)} \, \frac{ L_p(s,\chi_{-d})}{L_p(1-s, \chi_{-d})} |c|_p^{2s-1} |N|_p^{s - \tfrac12}
			\end{equation}
		\end{enumerate}
	\end{lemma}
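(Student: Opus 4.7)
My plan is to reduce both Whittaker functions to representation densities via \cite[Lemma 5.7.1]{KRYbook}, which gives
\begin{align*}
W_{T,p}(e, s, \Phi^{\calL}_{2,p}) &= \gamma_p(\calV)^2 \cdot [\calL_p^{\vee}:\calL_p]^{-1} \cdot |2|_p^{1/2} \cdot \alpha_p(p^{-s}, T, \calL_p),\\
W_{t,p}(e, -s+\tfrac12, \Phi^{\calL}_{1,p}) &= \chi_{\calV,p}(-1) \cdot \gamma_p(\calV) \cdot [\calL_p^{\vee}:\calL_p]^{-1/2} \cdot \alpha_p(p^{s}, t, \calL_p).
\end{align*}
Dividing, the ratio becomes an explicit prefactor built from $\gamma_p(\calV)$, $\chi_{\calV,p}(-1)$, and the lattice index $[\calL_p^{\vee} : \calL_p]^{-1/2}$, multiplied by the ratio of representation densities $\alpha_p(p^{-s}, T, \calL_p)/\alpha_p(p^{s}, t, \calL_p)$. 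Since $[\calL_p^{\vee}:\calL_p] = 1$ for $p \nmid 2N$ and $= p$ for $p | 2N$, this accounts for the $|2|_p$ factor in (i) and for part of the $|N|_p^{s - 1/2}$ factor in (ii).

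For $p \nmid N$, the relation $Q(x) = N \det x$ with $N \in \bbZ_p^\times$ gives the identification $\alpha_p(X, T, \calL_p) = \alpha_p(X, N^{-1} T, L_0)$ (and similarly for the scalar $t$), where $L_0 = (M_2(\bbZ_p)^{tr = 0}, \det)$ is the split ternary lattice appearing in \Cref{lem:rep dens relation}. Both densities are then available in closed form from \cite[\S 8]{YangLocal} (cf.\ also Kitaoka). Substituting $4Nt = c^2 d$ with $-d$ fundamental, and using the Euler product $L_p(s, \chi_{-d}) = (1 - \chi_{-d}(p) p^{-s})^{-1}$, the quotient telescopes into the displayed combination of $\zeta_p(\pm 2s + 2)$, $L_p(s, \chi_{-d})$, and $|c|_p^{2s-1}$, yielding~(i).

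For $p | N$, I would instead invoke \Cref{lem:rep dens relation} (stated for general $T \in \Sym_2(\bbZ_p)$, hence applicable in the rank-one case) to express $\alpha_p(X, T, \calL_p)$ in terms of $\alpha_p(X, NT, L_0)$ and the polynomial $R_1(X, NT, L_0)$, and combine with the rank-one version of \Cref{prop:rep dens split} to write $\alpha_p(X, t, \calL_p)$ explicitly. The shift from $\zeta_p(\pm 2s + 2)$ in (i) to $\zeta_p(s \pm 1)$ in (ii), the overall sign, and the remaining factor of $|N|_p^s$ all arise from the $X^{-2}$ in \Cref{lem:rep dens relation} and from the constant term in the decomposition $\alpha_p = 1 + R_1 + R_2$ over a non-maximal lattice.

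The main obstacle is the detailed bookkeeping in case (ii): one must track the cancellations among the three pieces of $\alpha_p = 1 + R_1 + R_2$ over $\calL_p$, pair them correctly against the genus-one denominator, and match all the $|2|_p$, $|N|_p$, $\gamma_p(\calV)$, and $\chi_{\calV,p}(-1)$ factors. This is parallel to the analogous computation in \Cref{prop:whittaker local degree identity bad primes} and requires no new conceptual ingredients.
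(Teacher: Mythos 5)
Your plan is essentially the route the paper takes: both cases reduce to representation densities via \cite[Lemma 5.7.1]{KRYbook}, the prefactor bookkeeping (with $\gamma_p$, $\chi_{\calV,p}(-1)$, $[\calL_p^\vee : \calL_p]^{-1/2}$, and $|2|_p^{1/2}$) is exactly as you describe, and the remaining ratio of densities is evaluated from closed-form expressions. For $p\nmid N$ the paper reads off $\alpha_p(X,T,\calL_p)$ and $\alpha_p(X,t,\calL_p)$ directly from the formulas of \cite{YangLocal2} in terms of the Gross--Keating invariants of $\mathrm{diag}(-N,t)$, whereas you first rescale to the unimodular lattice $L_0$; since $N\in\bbZ_p^\times$ these amount to the same thing. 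Two small corrections, however: (a) for $p\mid N$ the genus-one density $\alpha_p(X,t,\calL_p)$ is \emph{not} obtained from any ``rank-one version'' of \Cref{prop:rep dens split} (that proposition concerns $L_0$ in genus two); the paper computes it directly from \cite[Proposition 3.1]{YangLocal}, recorded as \eqref{eqn:alpha rank 1 level a odd} and \eqref{eqn:alpha rank 1 level a even}, and only applies \Cref{lem:rep dens relation} together with the $b\to\infty$ limit of \Cref{prop:rep dens split} to the genus-two numerator; (b) in part (i) one must handle $p=2$ as well, which is precisely why the paper phrases things via Gross--Keating invariants of $\mathrm{diag}(-N,t)$ and \cite{YangLocal2} rather than the odd-$p$ formulas of \cite[\S 8]{YangLocal}. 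Finally, you should make the dependence of $\ord_p(c)$ (from $4Nt=c^2d$) on the parity of $a=\ord_p(t)$ explicit, since that is exactly how the $|c|_p^{2s-1}$ factor materializes from the density quotient.
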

	\begin{proof}

		Suppose $p \nmid N$, and let $(0, a)$ denote the Gross-Keating invariants of the matrix $\mathrm{diag}(-N, t) \in \Sym_2(\bbQ_p)$, cf.\ \cite[Appendix B]{YangLocal2}; note that if $p \neq 2$, then $a = \ord_p(t)$.  Using \cite[Proposition B4]{YangLocal2}, one can check that
		\begin{equation}
		\ord_p(c) =
		\begin{cases}
		\frac{a-1}2, & a \text{ odd}, \\
		\frac{a}{2}, & a \text{ even.}
		\end{cases}
		\end{equation}
		In light of \eqref{eqn:Whittaker rep dens relation general}, it suffices to prove the identity
		\begin{equation}
		\left. \frac{\alpha_p(X, T, \calL)}{\alpha_p(X^{-1}, t, \calL)}\right|_{X = p^{-s}} \stackrel{?}{=}   \frac{ \zeta_p(-2s+2)}{\zeta_p(2s + 2)}\cdot \frac{ L_p(s, \chi_{-d})}{L_p(1-s, \chi_{-d})} \cdot |c|_p^{2s - 1},
		\end{equation}
		which amounts to an explicit computation using formulas of Yang. Indeed, by \cite[Proposition C2]{YangLocal2}, we have
		\begin{equation} \label{eqn:alpha rank 1}
		\frac{\alpha_p(X, t, \calL)}{(1-p^{-2}X^2)} =
		\begin{cases}
		\sum_{k=0}^{\frac{a-1}2}  p^{-k} X^{2k}, & a \text{ odd,} \\[10pt]
		\sum_{k=0}^{\frac{a}2}  p^{-k} X^{2k} + (\chi_{-d}(p) - p^{-1}X)^{-1} p^{-\frac{a}2-1}X^{a+1}, & a \text{ even.}
		\end{cases}
		\end{equation}
		On the other hand, taking the limit  $a_3 \to \infty$ in \cite[Theorem 5.7]{YangLocal2}, we have
		\begin{equation}
		\frac{\alpha_p(X,T,\calL)}{1- p^{-2}X^{-2}} =
		\begin{cases}
		\sum_{k=0}^{\frac{a-1}2}  p^{k} X^{2k}, & a \text{ odd,} \\[10pt]
		\sum_{k=0}^{\frac{a}2-1}  p^{k} X^{2k} + p^{\frac{a}2} X^a(1-\chi_{-d}(p)X)^{-1}, & a \text{ even.}
		\end{cases}
		\end{equation}
		The desired relation follows easily from an explicit computation.
		
		For $(ii)$, suppose that $p| N$, so that in particular $p \neq 2$.
		
		Write $a = \ord_p(t)$. Then a direct computation using \cite[Proposition 3.1]{YangLocal} gives
		\begin{equation} \label{eqn:alpha rank 1 level a odd}
		\alpha_p(X, t, \calL) = 1 + (1-p^{-1} ) X \left( \frac{ p^{-\frac{a+1}{2}} X^{a+1} - 1 }{p^{-1}X^2 - 1}   \right) + \chi_{-d}(p) p^{- \frac{a+1}{2}} X^{a+1}
		\end{equation}
		when $a$ is odd, and
		\begin{equation} \label{eqn:alpha rank 1 level a even}
		\alpha_p(X, t, \calL) = 1 + (1-p^{-1} ) X \left( \frac{ p^{-\frac{a}{2}} X^{a} - 1 }{p^{-1}X^2 - 1}   \right)  -  p^{- \frac{a }{2}-1} X^{a+1}
		\end{equation}	
		when $a$ is even.
		
		Consider the case that $a $ is odd, which in turn implies that $\ord_p(c) = \frac{a+1}2$.
		Then taking $b \to \infty$ in the formulas in \Cref{prop:rep dens split} and applying \Cref{lem:rep dens relation} gives
		\begin{multline}
		\frac{X^2}{X-p} \alpha(X, T, \calL) =
		\left( \frac{p^{\frac{a+1}2} X^{a+1} - 1}{p X^2 -1} \right) \left( (p - p^{-1}) X^2 - p^{-1}X -1 \right) \\
	  + \frac{p^{\frac{a+1}{2}} X^{a+1}}{1 - \chi_{-d}(p) X}  \left( - p^{-1} X^2 + (\chi_{-d}(p) - p^{-1})X - 1 \right) + \frac{ X+p}{p};
		\end{multline}
		note that in this case, $\chi_{-d}(p) = (-Nt, p)_p = v_0^+$ in the notation of \Cref{lem:rep dens relation}.	
		
		From this, a straightforward computation shows that
		\begin{equation}
		\frac{\alpha(X,T, \calL)}{\alpha_p(X^{-1}, t, \calL)} = - \left( \frac{1-p^{-1}X}{1-pX} \right) \cdot \left( \frac{1 - \chi_{-d}(p)p^{-1}X^{-1}}{1- \chi_{-d}(p)X} \right)p^{\frac{a+3}2} X^{a+2},
		\end{equation}
		and the lemma follows. A similar computation establishes the case of even $a$.
	\end{proof}
	
	Finally, we have the archimidean counterpart to the previous lemma:
	
	\begin{lemma} \label{lem:Whittaker comparison arch}
		Suppose $T = \psm{ 0 & \\ & t}$ and $v = \psm{ v_1 & \\ & v_2}$. Let
		\begin{equation}
		L_{\infty}(s, \chi_{-d}) := |d|^{\frac{s}2} \cdot
		\begin{cases}
		\pi^{-\frac{s+1}{2}} \Gamma(\frac{s+1}2), & \text{if } d>0, \\
		\pi^{-\frac{s}{2}} \Gamma(\frac{s}2), & \text{if }  d< 0.
		\end{cases}
		\end{equation}
		Then
		\begin{equation}
		\begin{split}
		\frac{W_{T, \infty} \left(g_v, s, \Phi^{\frac32}_{2,\infty}\right)}{W_{t,\infty}\left(g_{v_2}, -s + \frac12, \Phi^{\frac32}_{1,\infty}\right)} &= v_1^{-s/2 + 3/4k} (-2i)  \,  \left( \frac{1-s}{1+s} \right) \cdot
		\frac{ L_{\infty}(s, \chi_{-d})}{L_{\infty}(1-s, \chi_{-d})} \cdot \frac{ \zeta_{\infty}(-2s + 2)}{\zeta_{\infty}(2s + 2) }  \\
		& \qquad \times c^{2s-1} N^{-s + \frac12}.
		\end{split}
		\end{equation}
		Recall here we are writing $4 N t = c^2 d$, where $d$ is a fundamental discriminant.
		
		\begin{proof}
			By \cite[Proposition 5.7.7]{KRYbook}, we have\footnote{Note that there is an error in the statement of \cite[Proposition 5.7.7]{KRYbook}: the factor $\sqrt 2$ on the right hand side of the equation in that proposition should be $2$.}
			\begin{align}
			\frac{W_{T, \infty} \left(g_v, s, \Phi^{\frac32}_{2,\infty}\right)}{W_{t,\infty}\left(g_{v_2}, s - \frac12, \Phi^{\frac32}_{1,\infty}\right)}  = v_1^{-s/2 + 3/4} (-i) \frac{{2} }{s+1};
			\end{align}

			On the other hand, we may apply \cite[Proposition 14.1]{KRYFaltings} to see
			\begin{equation}
			\frac{W_{t,\infty}\left(g_{v_2}, s - \frac12, \Phi^{\frac32}_{1,\infty}\right)}{W_{t,\infty}\left(g_{v_2}, -s +\frac12, \Phi^{\frac32}_{1,\infty}\right)} = (\pi|t|)^{s -\tfrac12} \cdot
			\begin{cases} \Gamma(\frac{-s + 3}2 ) \Gamma(\frac{s}2 + 1)^{-1}, & t>0 \\[10pt]
			\Gamma(-\frac{s}2) \Gamma(\frac{s-1}2)^{-1}, & t < 0.
			\end{cases}
			\end{equation}
			The lemma follows from a little algebra.
		\end{proof}
	\end{lemma}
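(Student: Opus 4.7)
The plan is to factor the left-hand ratio into two simpler pieces, each of which is available in the KRY literature, and then re-bundle the $\Gamma$-factors into the completed local $L$- and $\zeta$-factors as stated.

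The first piece is a genus two to genus one pullback identity for the archimedean Whittaker functional attached to the scalar weight $3/2$ section. Concretely, I would apply \cite[Proposition 5.7.7]{KRYbook} (after fixing the typographical $\sqrt 2$ versus $2$ discrepancy noted in the proof sketch) to obtain
\begin{equation*}
\frac{W_{T,\infty}(g_v, s, \Phi^{3/2}_{2,\infty})}{W_{t,\infty}(g_{v_2}, s-\tfrac12, \Phi^{3/2}_{1,\infty})} \;=\; v_1^{-s/2+3/4}\,(-i)\,\frac{2}{s+1}.
\end{equation*}
This is consistent with, and essentially follows from, the pullback of standard sections recorded in \eqref{eqn:arch section pullback} together with a direct evaluation of the resulting constant term: the extra degree of freedom in the genus two case is absorbed into a single elementary integral whose value is the factor $2/(s+1)$.

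The second piece is the local archimedean functional equation for the genus one Whittaker functional at the $3/2$ section. I would invoke \cite[Proposition 14.1]{KRYFaltings} to express
\begin{equation*}
\frac{W_{t,\infty}(g_{v_2}, s-\tfrac12, \Phi^{3/2}_{1,\infty})}{W_{t,\infty}(g_{v_2}, -s+\tfrac12, \Phi^{3/2}_{1,\infty})} \;=\; (\pi|t|)^{s-\frac12}\cdot
\begin{cases}
\Gamma\!\left(\tfrac{-s+3}{2}\right)\Gamma\!\left(\tfrac{s}{2}+1\right)^{-1}, & t>0,\\[4pt]
\Gamma\!\left(-\tfrac{s}{2}\right)\Gamma\!\left(\tfrac{s-1}{2}\right)^{-1}, & t<0.
\end{cases}
\end{equation*}
Multiplying this by the first ratio gives the desired left-hand side, with the power of $|t|$ to be converted using $4Nt = c^2 d$, so that $|t|^{s-1/2}$ becomes $(c^2 |d|/(4N))^{s-1/2}$.

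The remaining, and main, task is bookkeeping: I need to repackage the product $\pi^{s-1/2}\,|t|^{s-1/2}\,\Gamma(\cdot)/\Gamma(\cdot)$ (together with the rational factor $2/(s+1)$) into the stated combination $(1-s)/(1+s)\cdot L_\infty(s,\chi_{-d})/L_\infty(1-s,\chi_{-d})\cdot \zeta_\infty(-2s+2)/\zeta_\infty(2s+2)\cdot c^{2s-1} N^{-s+1/2}$. Here the case distinction $d>0$ versus $d<0$ in the definition of $L_\infty(s,\chi_{-d})$ matches the case distinction $t<0$ versus $t>0$ (since $4Nt = c^2 d$ forces $\mathrm{sgn}(t)=\mathrm{sgn}(d)$, and it is $-d$ that is the fundamental discriminant). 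Once this sign correspondence is pinned down, the reflection formula $\Gamma(z)\Gamma(1-z)=\pi/\sin(\pi z)$ and the duplication formula let me rewrite the quotient of Gamma-factors from KRYFaltings as the quotient $L_\infty(s,\chi_{-d})/L_\infty(1-s,\chi_{-d})$, while the ``excess" factors assemble into $\zeta_\infty(-2s+2)/\zeta_\infty(2s+2)$ and the rational factor $(1-s)/(1+s)$. The hard part is not any single step but keeping track of signs, powers of $\pi$, factors of $2$, and the correct $\Gamma$-identities; this is primarily where attention is required, and it mirrors the analogous (but slightly different normalization) computation in \cite[\S 14]{KRYFaltings}.
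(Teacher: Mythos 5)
Your proposal matches the paper's proof exactly: both apply \cite[Proposition 5.7.7]{KRYbook} (with the noted $\sqrt 2$ correction) to reduce the genus-two Whittaker functional to a genus-one one, then invoke \cite[Proposition 14.1]{KRYFaltings} for the archimedean functional equation, and finish by repackaging the resulting $\Gamma$-factors into the stated $L_\infty$ and $\zeta_\infty$ quotients. One small slip in your description of the final bookkeeping: since $4Nt=c^2 d$ forces $\mathrm{sgn}(t)=\mathrm{sgn}(d)$ (as your own parenthetical correctly notes), the case $d>0$ pairs with $t>0$, not $t<0$; this does not affect the substance of the argument.
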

	
	\begin{corollary}
		Suppose $T = \psm{0 & \\ & t}$ and $v =\psm{v_1 & \\ & v_2}$. Then
		\begin{equation}
		\begin{aligned}
		W_T(g_v, s, \Phi^{\calL}_{2}) &=  v_1^{-\frac{s}{2} + \frac34} N^{-2s +1 }  \left( \frac{s-1}{s+1} \right)  \frac{\Lambda(-2s+2)}{\Lambda(2s+2)} \left( \prod_{p|N} \beta_p(s)\right)  \\
		& \qquad \times W_{t}(g_{v_2}, -s+\tfrac12, \Phi^{\calL}_1)
		\end{aligned}
		\end{equation}
		where $\Lambda(s) = \prod_{v \leq \infty} \zeta_v(s)$ and
		\begin{equation}
		\beta_p(s) = - \frac{ \zeta_p(s-1) \zeta_p(2s+2) }{\zeta_p(s+1) \zeta_p(-2s+2)}.
		\end{equation}
		\begin{proof}
			Combining  \Cref{lem:Whittaker comparison local} and \Cref{lem:Whittaker comparison arch}, we have
			\begin{equation}
			\begin{aligned}
			\frac{W_{T}(g_{v}, s, \Phi^{\calL}_2) }{W_t(g_{v_2}, -s + \frac12, \Phi^{\calL}_1)} &= v_1^{\frac{s}2 + \frac34}  (-2i)  N^{-s + \frac12} \frac{\Lambda(-2s + 2)}{\Lambda(2s+2)} \\
			& \qquad \times \left( \prod_{p < \infty} \gamma_p(\calV) \chi_{\calV, p}(-1) |2|_p |N|_p^{s-\tfrac12} \right) \cdot \prod_{p|N}  \beta_p(s);
			\end{aligned}
			\end{equation}
			here we used the functional equation $\Lambda(s, \chi_{-d}) = \Lambda(1-s, \chi_{-d})$ for the completed $L$-function $\Lambda(s, \chi_{-d}) = \prod_{v \leq \infty} L_v(s, \chi_{-d})$.
			
			On the other hand, the product formula gives
			\begin{equation}
			\prod_{p<\infty} \gamma_p(\calV) \chi_{\calV, p}(-1) = \gamma_{\infty}(\calV)^{-1}
			\cdot (-1, -N)_{\bbR} = - \gamma_{\infty}(\calV)^{-1}.
			\end{equation}
			By \cite[p.\ 330]{KRYbook}, we have $\gamma_{\infty}(\calV) = -i$, and the corollary follows easily.
		\end{proof}

	\end{corollary}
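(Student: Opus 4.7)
The plan is to prove the identity by exploiting the Euler product structure of Whittaker functions. Both sides factor as products of local Whittaker functions, so the claim reduces to simplifying an explicit product of local ratios, one per place, each of which has already been computed in the two preceding lemmas. The global structure of the identity will then emerge from three classical product formulas: $\prod_v \gamma_v(\calV) = 1$ for Weil indices, $\prod_v (a,b)_v = 1$ for Hilbert symbols, and the functional equation $\Lambda(s,\chi_{-d}) = \Lambda(1-s,\chi_{-d})$ for the completed Dirichlet $L$-function.

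Concretely, I would begin by writing
\[
\frac{W_T(g_v, s, \Phi_2^{\calL})}{W_t(g_{v_2}, -s + \tfrac12, \Phi_1^{\calL})} = \frac{W_{T,\infty}(g_{v,\infty}, s, \Phi_{2,\infty}^{3/2})}{W_{t,\infty}(g_{v_2,\infty}, -s + \tfrac12, \Phi_{1,\infty}^{3/2})} \cdot \prod_{p < \infty} \frac{W_{T,p}(e, s, \Phi_{2,p}^{\calL})}{W_{t,p}(e, -s + \tfrac12, \Phi_{1,p}^{\calL})},
\]
then substitute \Cref{lem:Whittaker comparison local}(i) at each prime $p\nmid N$, \Cref{lem:Whittaker comparison local}(ii) at each prime $p\mid N$, and \Cref{lem:Whittaker comparison arch} at the archimedean place. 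Note that the factor $v_1^{-s/2+3/4}$ in the final answer is supplied entirely by the archimedean lemma.

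The remaining task is bookkeeping, organized by type. The $|c|_p^{2s-1}$ terms at all finite primes combine with the archimedean $c^{2s-1}$ to give $1$ via $\prod_p |c|_p = c^{-1}$; the $|N|_p^{s-1/2}$ terms at $p\mid N$ combine with the archimedean $N^{-s+1/2}$ to give $N^{-2s+1}$; the $L$-factor ratios at all places telescope via the functional equation of $\chi_{-d}$ to $\Lambda(s,\chi_{-d})/\Lambda(1-s,\chi_{-d}) = 1$; and the zeta factors $\zeta_p(-2s+2)/\zeta_p(2s+2)$ from primes $p\nmid N$, combined with the archimedean analogue, assemble into $\Lambda(-2s+2)/\Lambda(2s+2)$ modulo a correction from $p\mid N$, which then fuses with the $-\zeta_p(s-1)/\zeta_p(s+1)$ terms from $p\mid N$ to yield precisely $\prod_{p\mid N}\beta_p(s)$. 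Finally, the constants $\gamma_p(\calV)\chi_{\calV,p}(-1)|2|_p$ across finite primes, combined with $\gamma_\infty(\calV) = -i$, $(-1,-N)_\infty = -1$, and the three product formulas, evaluate to $-i/2$; multiplying by the archimedean constant $-2i \cdot (1-s)/(1+s)$ then produces the desired $(s-1)/(s+1)$. The main obstacle is not conceptual but organizational: orchestrating the various signs, particularly reconciling the leading minus sign of \Cref{lem:Whittaker comparison local}(ii) with the sign built into $\beta_p(s)$, and tracing the interplay of $\gamma_\infty(\calV)$ and $(-1,-N)_\infty$ through the product formulas, requires care but no new ideas.
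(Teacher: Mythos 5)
Your proposal is correct and follows essentially the same route as the paper: factor the global Whittaker ratio over places, substitute \Cref{lem:Whittaker comparison local} and \Cref{lem:Whittaker comparison arch}, collapse the $c$-powers and $L$-factor ratios via product formulas and the functional equation, assemble the $\zeta$-ratios into $\Lambda(-2s+2)/\Lambda(2s+2)$ (with the $p\mid N$ discrepancy absorbed into $\beta_p(s)$), and evaluate the constants using $\prod_v\gamma_v(\calV)=1$, $\gamma_\infty(\calV)=-i$, and $(-1,-N)_\infty=-1$. Your bookkeeping of the constants is accurate (indeed you correctly read the archimedean lemma's exponent as $v_1^{-s/2+3/4}$, where the paper's intermediate display has a sign slip), so this matches the paper's argument in substance.
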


	\begin{corollary} \label{cor:rank 1 Eis coeff}
		Suppose $T = \psm{0 & \\ & t}$ and $v = \psm{v_1 & \\ & v_2}$. Then
		\begin{equation}
		\begin{aligned}
		v_1^{-3/4}	E'_T(g_v, 0, \Phi_2^{\calL}) &=  2 \, W_t'(g_{v_2}, \tfrac 12, \Phi^{\calL}_1)  \\
		& \qquad + \left( \log v_1 + 2 +  \frac{4\Lambda'(2)}{\Lambda(2)}  + \sum_{p|N} \frac{p-1}{p+1} \log p \right) W_t(g_{v_2}, \tfrac12, \Phi^{\calL}_1).
		\end{aligned}
		\end{equation}
		\begin{proof}
			This follows immediately from the previous corollary, and equations  \eqref{eqn:Eisenstein FC rank 1} and \eqref{eqn:whittaker pullback global}.
			
		\end{proof}
	\end{corollary}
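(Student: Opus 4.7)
The plan is to combine \Cref{lem:GS Eis FC rk 1}, the pullback identity \eqref{eqn:whittaker pullback global}, and the explicit functional-equation formula of the previous corollary, then differentiate at $s=0$. Substituting \eqref{eqn:whittaker pullback global} into the formula of \Cref{lem:GS Eis FC rk 1} yields
\begin{equation*}
E_T(g_v, s, \Phi_2^{\calL}) \ = \ A(s) + B(s),
\end{equation*}
where $A(s) := v_1^{s/2+3/4}\, W_t(g_{v_2}, s+\tfrac12, \Phi_1^{\calL})$ and $B(s) := W_T(g_v, s, \Phi_2^{\calL})$. By the previous corollary, $B(s) = v_1^{-s/2 + 3/4} \, C(s) \, W_t(g_{v_2}, -s+\tfrac12, \Phi_1^{\calL})$, where
\begin{equation*}
C(s) \ := \ N^{-2s+1} \left(\tfrac{s-1}{s+1}\right) \frac{\Lambda(-2s+2)}{\Lambda(2s+2)} \prod_{p|N} \beta_p(s).
\end{equation*}

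A direct check using $\zeta_p(-1) = -1/(p-1)$ and $\zeta_p(1) = p/(p-1)$ gives $\beta_p(0) = 1/p$, so $\prod_{p|N}\beta_p(0) = 1/N$ (as $N$ is squarefree); this cancels against $N^{-2s+1}\big|_{s=0} = N$, and combined with the zero of $s\mapsto s-1$ at $s=0$, one finds $C(0) = -1$. Consequently $A(0) + B(0) = 0$, consistent with the incoherence of $\Phi_2^{\calL}$.

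Now differentiate at $s=0$. The computation of $A'(0)$ is immediate. For $B'(0)$, the $s \mapsto -s$ reversal in the Whittaker argument produces an extra $-W_t'$ term that reinforces the $W_t'$ from $A'(0)$, doubling its coefficient; meanwhile the $\pm\tfrac12 \log v_1$ contributions combine with $A(0) - B(0) = 2A(0)$ to give the $\log v_1$ term in the statement. Collecting everything yields
\begin{equation*}
v_1^{-3/4} E_T'(g_v, 0, \Phi_2^{\calL}) \ = \ 2 W_t'(g_{v_2}, \tfrac12, \Phi_1^{\calL}) + \left(\log v_1 + \tfrac{C'(0)}{C(0)}\right) W_t(g_{v_2}, \tfrac12, \Phi_1^{\calL}).
\end{equation*}

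The main (and only nontrivial) obstacle is identifying $C'(0)/C(0)$ with the constant appearing in the statement. Logarithmic differentiation of $C(s)$ at $s=0$ produces four contributions: $-2\log N$ from $N^{-2s+1}$, the value $-2$ from $(s-1)/(s+1)$, the term $-4\Lambda'(2)/\Lambda(2)$ from the Dirichlet-$\zeta$ factor, and the sum $\sum_{p|N}\beta_p'(0)/\beta_p(0)$. Since $C(0)=-1$, the overall sign flips. A direct but short calculation of $\beta_p'(0)/\beta_p(0)$ using $\frac{d}{dx}\log \zeta_p(x) = -\log p/(p^x - 1)$ yields
\begin{equation*}
\tfrac{\beta_p'(0)}{\beta_p(0)} \ = \ \tfrac{p\log p}{p-1} + \tfrac{\log p}{p-1} - \tfrac{4\log p}{p^2 - 1} \ = \ \tfrac{(p+1)^2 - 4}{(p-1)(p+1)} \log p \ = \ \tfrac{p+3}{p+1}\log p.
\end{equation*}
Combining with $2\log N = \sum_{p|N} 2 \log p$ and using $2 - \frac{p+3}{p+1} = \frac{p-1}{p+1}$ produces the constant $2 + \frac{4\Lambda'(2)}{\Lambda(2)} + \sum_{p|N} \frac{p-1}{p+1}\log p$ claimed in the statement, completing the proof.
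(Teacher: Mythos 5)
Your argument is correct and fills in exactly the calculation that the paper declares to "follow immediately" from the previous corollary together with \eqref{eqn:Eisenstein FC rank 1} and \eqref{eqn:whittaker pullback global}. One small slip: since $C(0)=-1$, the coefficient of $W_t(g_{v_2},\tfrac12,\Phi_1^{\calL})$ in your intermediate display should read $\log v_1 + C'(0)$, not $\log v_1 + C'(0)/C(0)$; your subsequent remark "the overall sign flips" silently undoes this typo, so the final answer is correct.
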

	
	Our next step is to compare this expression with the arithmetic heights of special divisors, as computed in \cite{DuYang}.
	Recall that we have written the Eisenstein in ``classical" coordinates as
	\begin{equation}
	E(\tau, s, \Phi^{\calL}_2) = \det(v)^{-\frac34} E(g_{\tau}, s, \Phi^{\calL}_2) = \sum_T  C_T(v, s, \Phi_2^{\calL}) \, q^T.
	\end{equation}

	\begin{proposition}
		Suppose $T = \psm{0 & \\ & t}$ and $v = \psm{v_1 & \\ & v_2}$. Then
		\begin{equation}
		4  \langle \widehat \calZ(t, v_2), \widehat\omega_N \rangle + \log v_1 \deg Z(t)  =  - \frac{ \prod_{p|N}(p+1)}{12} C_T'(v, 0, \Phi^{\calL}_2),
		\end{equation}
		where $ \langle \widehat \calZ(t, v_2), \widehat\omega_N \rangle  = \widehat \deg (\calZ(t,v_2) \cdot \widehat \omega_N)$ is the intersection pairing, as in \Cref{sec:arith Ch gps}
		
		\begin{proof} This follows  from combining \Cref{cor:rank 1 Eis coeff} with \cite[Theorem 1.3]{DuYang}.
			Let
			\begin{equation}
			\calE_{DY}(\tau, s) =  A_1(s) E(\tau, s-\tfrac12, \Phi^{\calL}_1), \qquad A_1(s) := - \frac{s}{4 \pi} \Lambda(2s)\left(  \prod_{p|N}(1-p^{-2s}) \right) N^{\frac12 + \frac32s}
			\end{equation}
			denote the normalized genus 1 Eisenstein series in \cite{DuYang} (note that the variable $s$ is shifted by 1/2 here, as compared to loc.\ cit.). Then for $\tau_2 = u_2 + i v_2 \in \bbH$,  Theorem 1.3 of loc.\ cit.\ states
			\begin{align}
			\varphi(N) \cdot  \langle \widehat \calZ(t, v_2), \widehat\omega_N \rangle q_2^t &= \calE_{DY,t}'(\tau_2, 1)  - \sum_{p|N} \frac{p}{p-1} \log p \cdot  \calE_{DY,t}(\tau_2, 1) \\
			&= A_1(1) \left( E_t'(\tau_2, \tfrac12, \Phi^{\calL}_1) + \left\{  \frac{A_1'(1)}{A_1(1)} - \sum_{p|N} \frac{p}{p-1} \log p \right\} E_t(\tau_2, \tfrac12, \Phi^{\calL}_1)\right) .
			\end{align}
			Now
			\begin{equation}
			A_1(1) = - \frac{1}{4 \pi} \Lambda(2) N^2 \prod_{p|N} (1- p^{-2}) = - \frac{1}{24} \prod_{p|N} (p^2 -1)
			\end{equation}
			and
			\begin{align}
			\frac{A_1'(1)}{A_1(1)} - \sum_{p|N} \frac{p}{p-1} \log p &= 1 +\frac{2\Lambda'(2)}{\Lambda(2)}  + \frac32 \log N + \sum_{p|N}  \left( \frac{2p^{-2}}{1-p^{-2}}   - \frac{p}{p-1} \right)\log p \\
			&= 	1 +  \frac{2\Lambda'(2)}{\Lambda(2)} + \sum_{p|N} \frac{p-1}{2(p+1)} \log p.
			\end{align}
			
			Thus for $\tau  = \psm{\tau_1 & \\ & \tau_2}$, we have $q_{\tau_2}^t =q^T$ and, comparing with \Cref{cor:rank 1 Eis coeff}, we find
			\begin{align}
			2 	(v_2)^{\frac34}	 \cdot \frac{ \varphi(N)}{A_1(1)} \cdot  & \langle \widehat \calZ(t, v_2), \widehat\omega_N  \rangle  \, q^T \\
			&= 2 W'_t(g_{\tau_2}, \tfrac12, \Phi_1^{\calL})  + 2 \left\{	1 +  \frac{2\Lambda'(2)}{\Lambda(2)} + \sum_{p|N} \frac{p-1}{2(p+1)} \log p\right\}W_t(g_{\tau_2}, \tfrac12, \Phi^{\calL}_1) \\
			&= v_1^{-\frac34} E'_T(g_{\tau}, 0, \Phi^{\calL}_2) - \log v \cdot W_{t}(g_{\tau_2}, \tfrac12, \Phi^{\calL}_1).
			\end{align}
			Finally, \cite[Theorem 1.3]{DuYang} asserts that
			\begin{equation}
			\frac12 \deg(Z(t))q_{\tau_2}^t = \frac{1}{\varphi(N)} \calE_{DY,t}(\tau_2, 1) = \frac{A_1(1)}{\varphi(N)}\cdot v_2^{-\frac34} \cdot W_{t}(g_{v_2}, \tfrac12, \Phi^{\calL}_1).
			\end{equation}
		\end{proof}
	\end{proposition}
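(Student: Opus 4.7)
The plan is to play Corollary \ref{cor:rank 1 Eis coeff} against the main result of \cite{DuYang} (Theorem 1.3 in loc.\ cit.), which expresses the height $\langle \widehat \calZ(t,v_2), \widehat\omega_N\rangle$ in terms of the derivative of a normalized genus-one Eisenstein series $\calE_{DY}(\tau_2,s)$ at $s=1$.  Concretely, Du--Yang package the genus-one section $\Phi_1^{\calL}$ with an explicit archimedean-times-level normalization
\[
A_1(s) = -\frac{s}{4\pi}\Lambda(2s)\Bigl(\prod_{p|N}(1-p^{-2s})\Bigr) N^{1/2+3s/2},
\]
so that $\calE_{DY,t}(\tau_2,s) = A_1(s) E_t(\tau_2,s-\tfrac12,\Phi_1^{\calL})$.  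The key input from \cite{DuYang} will be an identity of the shape
\[
\varphi(N)\cdot \langle \widehat\calZ(t,v_2),\widehat\omega_N\rangle\, q_2^t
   = \calE_{DY,t}'(\tau_2,1)
     - \Bigl(\sum_{p|N}\tfrac{p}{p-1}\log p\Bigr)\calE_{DY,t}(\tau_2,1).
\]

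The next step is to differentiate $\calE_{DY}(\tau_2,s)$ at $s=1$ using the product rule, which produces the genus-one Whittaker derivative $W_t'(g_{v_2},\tfrac12,\Phi_1^{\calL})$ plus a constant $\bigl(A_1'(1)/A_1(1) - \sum_{p|N}\tfrac{p}{p-1}\log p\bigr)$ times $W_t(g_{v_2},\tfrac12,\Phi_1^{\calL})$.  A direct logarithmic differentiation of $A_1(s)$ gives
\[
\frac{A_1'(1)}{A_1(1)} = 1 + \frac{2\Lambda'(2)}{\Lambda(2)} + \tfrac{3}{2}\log N + \sum_{p|N}\frac{2p^{-2}}{1-p^{-2}}\log p,
\]
and combining with the $-\sum_{p|N}\frac{p}{p-1}\log p$ collapses the $p$-adic contributions into $\sum_{p|N}\frac{p-1}{2(p+1)}\log p$ (and absorbs the $\tfrac32\log N$ into the level factor).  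The resulting expression for $\langle\widehat\calZ(t,v_2),\widehat\omega_N\rangle\,q^T$ should match, up to the normalizing constant $A_1(1) = -\tfrac{1}{24}\prod_{p|N}(p^2-1)$, exactly the bracket appearing in Corollary \ref{cor:rank 1 Eis coeff}, but missing the $\log v_1$ term present on the geometric side via Corollary \ref{cor:rank 1 Eis coeff}.

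Finally, I would account for the $\log v_1\cdot \deg Z(t)$ term.  This is the cleanest part: Du--Yang also gives $\tfrac12\deg Z(t)\,q_2^t = \varphi(N)^{-1}\calE_{DY,t}(\tau_2,1)$, i.e.\ $\deg Z(t)$ is proportional to $W_t(g_{v_2},\tfrac12,\Phi_1^{\calL})$.  Thus the $\log v_1\cdot W_t(g_{v_2},\tfrac12,\Phi_1^{\calL})$ appearing in $v_1^{-3/4}E_T'(g_v,0,\Phi_2^{\calL})$ corresponds precisely to $\log v_1\cdot \deg Z(t)$ after the normalizations are unwound.  Dividing through by $v_1^{3/4}$ to pass from $E_T'(g_v,0,\Phi_2^{\calL})$ to $C_T'(v,0,\Phi_2^{\calL})$ and collecting the constants yields the factor $-\tfrac{1}{12}\prod_{p|N}(p+1)$; note that $\varphi(N)\prod_{p|N}(p+1) = \prod_{p|N}(p^2-1)$ for squarefree $N$, which is the arithmetic coincidence that makes the normalizations line up.

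The main obstacle is bookkeeping, not conceptual: the two sides must agree on the nose, so every normalization has to be tracked carefully, including the factor of $\tfrac12$ from the stack $\{\pm 1\}$-action (Remark \ref{rmk:stacky 1/2}), the shift between $s$ and $s-\tfrac12$ in the Du--Yang conventions, and the exact form of $A_1'(1)/A_1(1)$.  The argument is otherwise mechanical once Corollary \ref{cor:rank 1 Eis coeff} and \cite[Theorem 1.3]{DuYang} are both in hand.
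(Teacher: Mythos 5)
Your proposal follows exactly the same route as the paper's proof: invoke \cite[Theorem 1.3]{DuYang} via the normalization $\calE_{DY}(\tau_2,s)=A_1(s)E(\tau_2,s-\tfrac12,\Phi_1^{\calL})$, log-differentiate $A_1$ at $s=1$ to produce the constant $1+\tfrac{2\Lambda'(2)}{\Lambda(2)}+\sum_{p|N}\tfrac{p-1}{2(p+1)}\log p$, compare with \Cref{cor:rank 1 Eis coeff}, and handle the $\log v_1\cdot\deg Z(t)$ term by the Du--Yang degree formula, using $\varphi(N)\prod_{p|N}(p+1)=\prod_{p|N}(p^2-1)$ to collapse the constants. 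Your parenthetical about ``absorbing $\tfrac32\log N$ into the level factor'' is slightly loose (for squarefree $N$ one writes $\log N=\sum_{p|N}\log p$ and combines it termwise into the $p$-sum, as the paper does explicitly), and the normalization $C_T=\det(v)^{-3/4}E_T$ requires dividing by $(v_1v_2)^{3/4}$ rather than $v_1^{3/4}$ alone (the $v_2^{3/4}$ cancels on the genus-one side), but these are bookkeeping phrasings rather than gaps.
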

	
	We arrive at the main theorem for $T$ of rank 1.
	
	\begin{theorem}
		Suppose $\mathrm{rank}(T) = 1$. Then
		\begin{equation}
		\widehat \deg \calZ(T,v) = \frac{ \prod_{p|N} p+1}{24} C'_T(v, 0, \Phi^{\calL})
		\end{equation}
		
		\begin{proof}
			It follows from definitions that for any $\gamma \in \GL_2(\bbZ)$, we have
			\begin{equation}
			C_{\gamma T {}^t\gamma}(v, s, \Phi^{\calL}) = C_{T}(\gamma^t v \gamma, s, \Phi^{\calL}).
			\end{equation}
			As $\widehat\calZ(T,v)$ satisfies the same invariance, cf.\ \eqref{eqn:calZ SL2 invariance}, we may use \Cref{lem:rank 1 T} to assume that $T = \psm{0 & \\ & t}$. Furthermore, given any $v \in \Sym_2(\bbR)$, one can find a matrix $\theta = \psm{1&*\\& 1} \in \GL_2(\bbR)$ such that $\theta v {}^t\theta$ is diagonal, and we have
			\begin{equation}
			C_{\psm{0 & \\ & t}}(\theta v {}^t \theta, s, \Phi^{\calL}) = 	C_{\psm{0 & \\ & t}}( v, s, \Phi^{\calL}) \qquad \text{and} \qquad \widehat\calZ(\psm{0 & \\ & t},\theta v {}^t \theta  ) = \widehat\calZ(\psm{0 & \\ & t}, v  ).
			\end{equation}
			Thus we may assume $T = \psm{0 & \\ & t}$ and $v = \psm{v_1 & \\ & v_2}$. In this case, we have, by definition,
			\begin{equation}
			\widehat\calZ(T,v) = - 2 \widehat\calZ(t, v_2) \cdot \widehat\omega_N + \left( 0, \log v_1 \delta_{Z(t)} \right),
			\end{equation}
			cf.\ \Cref{def:Z(T) rank 1}. The theorem follows from the previous proposition.
		\end{proof}
	\end{theorem}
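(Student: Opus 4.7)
The plan is to reduce the statement, via equivariance, to a canonical diagonal form for both $T$ and $v$, and then unpack the definition of $\widehat{\calZ}(T,v)$ so that the identity follows directly from the preceding proposition.

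First I would exploit the invariance properties already recorded. The identity $\widehat{\calZ}({}^{t}\gamma T\gamma, v) = \widehat{\calZ}(T,\gamma v\,{}^{t}\gamma)$ of \eqref{eqn:calZ SL2 invariance} and the analogous behaviour of the Fourier coefficient $C_{{}^{t}\gamma T\gamma}(v,s,\Phi^{\calL}) = C_T(\gamma v\,{}^{t}\gamma, s, \Phi^{\calL})$ (which is immediate from the Siegel-parabolic equivariance of the Eisenstein series) allow us to conjugate $T$ by any element of $\GL_2(\bbZ)$. By \Cref{lem:rank 1 T}, we may therefore assume $T = \psm{0 & \\ & t}$ for a uniquely determined $t \in \bbZ$. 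Then, for any real symmetric positive $v$, the invariance \eqref{eqn:calZ theta invariance} under upper-triangular unipotent $\theta \in \mathrm{SL}_2(\bbR)$, together with the obvious matching invariance of $C_T(v,s,\Phi^{\calL})$ under $v \mapsto \theta v\,{}^{t}\theta$ (which comes from the Fourier transform in the $u$-variable on the unipotent radical), lets us diagonalize $v$ as well. So we reduce to $T = \psm{0&\\&t}$, $v = \psm{v_1 & \\ & v_2}$, and it suffices to prove the identity in this case.

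Next I would unpack \Cref{def:Z(T) rank 1}. With our reductions, $v_0 = t^{-1}\mathrm{tr}(Tv) = v_2$ and $\det v / v_0 = v_1$, so
\begin{equation}
\widehat{\calZ}(T,v) \ = \ \widehat{\calZ}(t,v_2)\cdot \widehat{\omega} \ - \ \bigl(0,\, \log v_1 \cdot \delta_{\calZ^{*}(t,v_2)(\bbC)}\bigr).
\end{equation}
Taking arithmetic degrees (with the stacky $1/2$ factor from \Cref{rmk:stacky 1/2} when evaluating the purely archimedean term) yields
\begin{equation}
\widehat{\deg}\,\widehat{\calZ}(T,v) \ = \ \widehat{\deg}\bigl(\widehat{\calZ}(t,v_2)\cdot \widehat{\omega}\bigr) \ - \ \tfrac{1}{2} \log v_1 \cdot \deg Z(t).
\end{equation}

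Finally, I would compare with the immediately preceding proposition, which asserts that
\begin{equation}
4\,\langle \widehat{\calZ}(t,v_2), \widehat{\omega}_N\rangle \ + \ \log v_1 \cdot \deg Z(t) \ = \ -\,\frac{\prod_{p\mid N}(p+1)}{12}\, C_T'(v,0,\Phi^{\calL}).
\end{equation}
Multiplying this through by $-\tfrac{1}{2}$ produces exactly the right-hand side $\tfrac{1}{24}\prod_{p\mid N}(p+1)\cdot C_T'(v,0,\Phi^{\calL})$, so to conclude I need $\widehat{\deg}(\widehat{\calZ}(t,v_2)\cdot \widehat{\omega}) = -2\,\langle \widehat{\calZ}(t,v_2),\widehat{\omega}_N\rangle$. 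The only non-trivial point here is to see that the extra terms in the decomposition $\widehat{\omega} = -2\widehat{\omega}_N - \sum_{p\mid N}\widehat{\calX}_p^{0} + (0,\log N)$ contribute nothing to the intersection with $\widehat{\calZ}(t,v_2)$: the archimedean term $(0,\log N)$ vanishes because $\widehat{\calZ}(t,v_2)$ has trivial underlying cycle at the archimedean place after pairing, and the intersection numbers $\langle \widehat{\calZ}(t,v_2),\widehat{\calX}_p^{0}\rangle$ cancel because of the symmetry of the cycle between the two components of the special fibre (the Atkin-Lehner symmetry interchanges $\calX_p^0$ and $\calX_p^\infty$ while preserving $\calZ(t)$). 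Verifying this last cancellation is the only technical point; everything else is formal.
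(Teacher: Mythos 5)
Your reduction to diagonal $T$ and $v$ and your unpacking of the definition match the paper's approach exactly, and the comparison with the preceding proposition is the right last step. The gap is in the final paragraph, where you justify $\widehat\deg(\widehat\calZ(t,v_2)\cdot\widehat\omega) = -2\langle\widehat\calZ(t,v_2),\widehat\omega_N\rangle$ by claiming that the $(0,\log N)$ term and the $-\sum_{p\mid N}\widehat\calX_p^0$ terms each contribute zero. Neither claim holds on its own. For the first: the arithmetic degree of $(Z,g)\cdot(0,\log N)$ equals $\tfrac12\log N\cdot\deg Z(\bbC)$, because the constant Green current $\log N$ pairs against the smooth form $\omega_g = \ddc g + \delta_{Z(\bbC)}$ attached to $(Z,g)$; having trivial underlying cycle at $\infty$ is irrelevant, and $\deg\calZ^*(t,v_2)(\bbC)$ is nonzero in general. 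For the second: the intersection of the divisor $\calZ^*(t,v_2)$ with the vertical component $\calX_p^0$ is a genuine nonzero local intersection number.

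What is true is that these terms cancel \emph{against each other}, not individually. Since $\widehat\calX_p^0+\widehat\calX_p^\infty = (0,2\log p)$ in $\ChowHat{1}(\calX)$ (as the paper records in the $T=0$ case), one has $(0,\log N)=\tfrac12\sum_{p\mid N}(\widehat\calX_p^0+\widehat\calX_p^\infty)$, and then the Atkin--Lehner symmetry $\langle\widehat\calZ(t,v_2),\widehat\calX_p^0\rangle=\langle\widehat\calZ(t,v_2),\widehat\calX_p^\infty\rangle$ (which requires $W_N^*\widehat\calZ(t,v_2)=\widehat\calZ(t,v_2)$, a fact you invoke but should actually verify for the special divisor) yields $\langle\widehat\calZ(t,v_2), -\sum_p\widehat\calX_p^0+(0,\log N)\rangle=0$. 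You should also note that replacing $\delta_{\calZ^*(t,v_2)(\bbC)}$ with $\delta_{Z(t)(\bbC)}$ requires $g(t,v_2)=0$, which fails when $t<0$ and $-Nt$ is a square; the cuspidal term must be tracked in that case. To be fair, the paper's own proof is terse here --- it passes to $-2\widehat\calZ(t,v_2)\cdot\widehat\omega_N$ with essentially no justification --- so the gap you are trying to close is real, but the mechanism you propose for it is the wrong one.
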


	\subsection{$T=0$}
	Our first task is computing the constant term $E_0(g,s,\Phi^{\calL}_2)$. To this end, for $g \in G_{2,\bbA}$, we abbreviate
	\begin{equation}
	B(g,s) := W_0\left(e, s+\tfrac12, (\eta^* \circ r(g)) \Phi^{\calL}_2 \right) , \qquad 	B_v(g,s) := W_{0,v}\left(e, s+\tfrac12, (\eta^* \circ r(g)) \Phi^{\calL}_{2,v} \right)
	\end{equation}
	where $W_0(...)$ is the Whittaker functional in genus 1, and $\eta^*  $ is the map defined in \eqref{eqn:eta defn}.
	Arguing as in \cite[\S 5.9]{KRYbook},  for $g \in G_{2, \bbA}$, we have
	\begin{equation} \label{eqn:E0 decomposition}
	E_0(g,s,\Phi_2^{\calL}) = W_0(g,s, \Phi_2^{\calL}) + \sum_{\gamma \in \Gamma_{\infty} \backslash \mathrm{SL}_2(\bbZ)}  B(m(\gamma) g, s) + \Phi_2^{\calL}(g,s).
	\end{equation}
	For $v \in \Sym_2(\bbR)_{>0}$, take $g = g_z \in G_{2,\bbA}$ as in  \eqref{eqn:g_tau def}.
	
	For the first term, \cite[(5.9.3)]{KRYbook} gives\footnote{Note that the power of 2 appearing on the right hand side of \cite[(5.9.3)]{KRYbook} is misstated.}
	\begin{equation}
	W_{0,\infty}(g_v, s, \Phi_{2,\infty}^{\calL}) = - 2^{3/2} \frac{(s-1) \zeta_{\infty}(2s-1)}{(s+1) \zeta_{\infty}(2s+2)} \det(v)^{-s/2+3/4}
	\end{equation}
	where $\zeta_{\infty}(s) = \pi^{-s/2} \Gamma(s/2)$, and 	
	\begin{equation}
	W_{0,p}(e, s, \Phi^{\calL}_{2,p}) = |2|_p^{3/2}\frac{\zeta_p(2s-1)}{\zeta_p(2s+2)} \gamma(\calV_p)^2
	\end{equation}	
	for $p \nmid N$.
	
	When $p | N$, we may take $a, b \to \infty$ in \Cref{prop:rep dens split}, apply \Cref{lem:rep dens relation} and simplify to find
	\begin{equation}
	W_{0,p}(e, s, \Phi^{\calL}_{2,p}) = \frac{ \zeta_{p}(2s -1)}{\zeta_{p}(2s + 2)} \cdot \frac{p^{-1}(1+p^{-s+1})}{1+p^{-s-1}} \gamma(\calV_p)^2.
	\end{equation}
	Combining these identities with the fact that $\gamma_{\infty}(\calV)^2 = -1$, and applying the product formula $\prod_{v \leq \infty} \gamma_v(\calV) = 1$, we have
	\begin{equation}
	W_0(g_v, s, \Phi^{\calL}_{2}) = \det(v)^{-s/2 + 3/4} \frac{(s-1)\Lambda(2s-1)}{(s+1)\Lambda(2s+2)} \cdot A(s),
	\end{equation}
	where
	\begin{equation}
	A(s) := \prod_{p|N} p^{-1} \,  \frac{1+p^{-s+1}}{1+p^{-s-1}}.
	\end{equation}
	Note that $A(0) = 1$ and
	\begin{equation}
	A'(0) = \sum_{p|N}  \frac{1-p}{1+p} \log p,
	\end{equation}
	and so
	\begin{equation}
	W'_0(g_v, 0, \Phi^{\calL}_2) = (\det v)^{3/4}  \cdot \left( \frac12 \log \det v + 2 - 4 \frac{\Lambda'(-1)}{\Lambda(-1)} - \sum_{p|N} \frac{1-p}{1+p} \log p \right).
	\end{equation}
	
	Next, we consider the middle term in \eqref{eqn:E0 decomposition}.
	For any $\gamma \in \mathrm{SL}_2(\bbZ)$ and prime $p$, we have
	\begin{equation}
	r(m(\gamma)) \Phi^{\calL}_{2,p} = \Phi^{\calL}_{2,p};
	\end{equation}
	hence \Cref{lem:pullback on Rallis sections} gives
	\begin{equation}
	\eta^* \left( r(m(\gamma)) \Phi^{\calL}_{2,p}(s) \right) = \Phi^{\calL}_{1,p}\left( s + \tfrac 12 \right),
	\end{equation}
	which is in particular independent of $\gamma$. Thus we have
	\begin{equation}
	B_p(m(\gamma), s) = W_{0,p}(e, s+\frac12, \Phi^{\calL}_1), \qquad \gamma\in \mathrm{SL}_2(\bbZ).
	\end{equation}
	Applying \eqref{eqn:whittaker rep dens rank 1} and taking $a \to \infty$ in  \eqref{eqn:alpha rank 1} for $p\nmid N$, or  \eqref{eqn:alpha rank 1 level  a odd} and \eqref{eqn:alpha rank 1 level a even} for $p|N$, a short computation gives
	\begin{equation}
	B_p(m(\gamma), s) =
	\chi_{\calV,p}(-1) \gamma_p(\calV) |2|_p^{1/2} \zeta_p(2s+1)
	\begin{cases}
	\zeta_p(2s+2)^{-1}, & p \nmid N \\
	p^{-s-3/2}\zeta_p(s)^{-1}, & p | N	
	\end{cases}
	\end{equation}
	for all $\gamma \in \mathrm{SL}_2(\bbZ)$.
	
	At the infinite place, the proof of \cite[Proposition 5.9.2]{KRYbook} gives
	\begin{equation}
	B_{\infty}(m(\gamma) g_v, s) = \sqrt{2} \, \gamma_{\infty}(\calV) \, \det(v)^{s/2 + 3/4} Q_v(c,d)^{-\frac12 - s} \frac{s\zeta_{\infty}(2s+1)}{(s+1) \zeta_{\infty}(2s+2)}
	\end{equation}
	where $\gamma = \psm{a & b \\ c& d}$ and $Q_v(c,d)  = (c,d) \cdot v \cdot {}^t(c,d)$. Consider the series
	\begin{equation}
	G(s, v) := \sum_{\psm{* & *\\ c& d} \in \Gamma_{\infty} \backslash \mathrm{SL}_2(\bbZ)} Q_v(c,d)^{-s}
	\end{equation}
	for $Re(s) \gg 0$. Note that
	\begin{equation}
	\zeta(2s)	G(s,v) =  Z(s,v)
	\end{equation}
	where
	\begin{equation}
	Z(s,v) := \sum_{\substack{m,n \in \mathbb Z \\ (m,n) \neq (0,0)} }  Q_v(m,n)^{-s}
	\end{equation}
	is the Epstein zeta function attached to the positive definite quadratic form determined by $v$.
	By \cite[\S 1.5]{Siegel}, the series $Z(s,v)$ extends meromorphically  to $s \in \bbC$, with only a  simple pole at $s=1$; in particular, $Z(s, v)$ is regular at $s=1/2$. It follows that $G(s,v)$ vanishes at $s=\frac12$.
	
	The preceding discussion implies that
	\begin{equation} \label{eqn:sum on gamma B term}
	\sum_{\gamma \in \Gamma_{\infty} \backslash \mathrm{SL}_2(\bbZ)} B(m(\gamma) g_v, s) = \det(v)^{s/2 + 3/4} \cdot \frac{s \Lambda(2s+1)}{(s+1)\Lambda(2s+2)} \cdot G(s+\tfrac12, v) \cdot B(s)
	\end{equation}
	where
	\begin{equation}
	B(s) = \prod_{p|N} p^{-s-3/2} \frac{\zeta_p(2s+2)}{\zeta_p(s)}.
	\end{equation}
	Notice that
	\begin{equation}
	\mathop{ord}_{s=0}B(s) = o(N)
	\end{equation}
	where $o(N)$ is the number of prime divisors of $N$, and hence the sum \eqref{eqn:sum on gamma B term} vanishes at $s=0$ to order at least $o(N)+1$.
	
	Finally, by definition we have that
	\begin{equation}
	\Phi_2^{\calL}(g_v, s) = \det(v)^{s/2+3/4}.
	\end{equation}
	
	To summarize the discussion, we obtain:
	
	\begin{proposition} \label{prop:deriv const term}
		Let $C_0(v, s, \Phi^{\calL}_2) =  \det(v)^{-3/4} E_0(g_v, s, \Phi^{\calL}_2)$. Then
		\begin{equation}
		C'_0(v,0, \Phi^{\calL}_2) = \log \det v + 2 - 4 \frac{\Lambda'(-1)}{\Lambda(-1)} - \sum_{p|N} \frac{1-p}{1+p} \log p  .
		\end{equation}
		\qed
	\end{proposition}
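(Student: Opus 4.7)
The plan is to read off the derivative of the constant coefficient directly from the three-term decomposition
\begin{equation*}
E_0(g,s,\Phi^{\calL}_2) \;=\; W_0(g,s,\Phi_2^{\calL}) \;+\; \sum_{\gamma\in\Gamma_\infty\backslash \mathrm{SL}_2(\bbZ)} B(m(\gamma)g,s) \;+\; \Phi_2^{\calL}(g,s)
\end{equation*}
established in the paragraphs preceding the statement, specialized to $g = g_v$. Since $C_0(v,s,\Phi^{\calL}_2) = \det(v)^{-3/4}\, E_0(g_v,s,\Phi_2^{\calL})$ and the prefactor is independent of $s$, it suffices to compute $E_0'(g_v,0,\Phi_2^{\calL})$ and divide by $\det(v)^{3/4}$.

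I would then address the three terms in order. The derivative of $W_0(g_v,s,\Phi^{\calL}_{2})$ at $s=0$ has already been calculated in the text, using the explicit factorization $W_0(g_v,s,\Phi^{\calL}_{2}) = \det(v)^{-s/2+3/4}\,\tfrac{(s-1)\Lambda(2s-1)}{(s+1)\Lambda(2s+2)}\,A(s)$ together with the evaluations $A(0)=1$ and $A'(0)=\sum_{p\mid N}\tfrac{1-p}{1+p}\log p$; the outcome is
\begin{equation*}
W_0'(g_v,0,\Phi^{\calL}_2) \;=\; (\det v)^{3/4}\!\left(\tfrac12\log\det v + 2 - 4\tfrac{\Lambda'(-1)}{\Lambda(-1)} - \sum_{p\mid N}\tfrac{1-p}{1+p}\log p\right)\!.
\end{equation*}
The third term is immediate: $\Phi_2^{\calL}(g_v,s) = \det(v)^{s/2+3/4}$ gives $\Phi_2^{\calL\,\prime}(g_v,0) = \tfrac12\log\det v\cdot \det(v)^{3/4}$. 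The middle term should be verified to contribute nothing at $s=0$: the derivation above shows $\sum_\gamma B(m(\gamma)g_v,s)$ carries a factor $G(s+\tfrac12,v)\cdot B(s)$, where $G(\tfrac12,v)=0$ (from regularity of the Epstein zeta function at $s=1$) and $\mathrm{ord}_{s=0}B(s)=o(N)\geq 1$ since $N>3$; hence this term vanishes to order at least $o(N)+1\geq 2$ at $s=0$, killing both its value and its derivative there.

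Adding the contributions and dividing by $\det(v)^{3/4}$ yields
\begin{equation*}
C_0'(v,0,\Phi^{\calL}_2) \;=\; \left(\tfrac12\log\det v + 2 - 4\tfrac{\Lambda'(-1)}{\Lambda(-1)} - \sum_{p\mid N}\tfrac{1-p}{1+p}\log p\right) + \tfrac12\log\det v,
\end{equation*}
which collapses to the asserted formula. No genuinely new analytic input is required here; the potential obstacle is bookkeeping, namely being careful with the $s$-shift implicit in $B_v(g,s) = W_{0,v}(e,s+\tfrac12,(\eta^*\circ r(g))\Phi^{\calL}_{2,v})$ so that the order-of-vanishing count at $s=0$ is applied correctly (it is $s=0$ in the genus-two variable, corresponding to the regular point $s=\tfrac12$ of the genus-one Eisenstein/Epstein data), and ensuring that the vanishing of $G(\tfrac12,v)$ is invoked before multiplying by the factor $s\Lambda(2s+1)/((s+1)\Lambda(2s+2))$ which is also finite and nonzero at $s=0$.
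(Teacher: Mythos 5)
Your proposal is correct and follows exactly the same route as the paper: the proposition is stated as a summary of the immediately preceding computation, which uses the decomposition \eqref{eqn:E0 decomposition} into $W_0 + \sum_\gamma B + \Phi_2^{\calL}$, the explicit formula for $W_0'(g_v,0,\Phi^{\calL}_2)$, the $\tfrac12\log\det v$ contribution from $\Phi_2^{\calL}(g_v,s) = \det(v)^{s/2+3/4}$, and the order-$\geq o(N)+1$ vanishing of the middle sum from $G(\tfrac12,v)=0$ and $\ord_{s=0}B(s)=o(N)\geq 1$. Your bookkeeping with the $s$-shift and the treatment of the simple pole of $\Lambda(2s+1)$ cancelling the factor of $s$ are both handled correctly.
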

	
	\begin{corollary}
		\begin{equation}
		\widehat{\mathrm{deg}} \, \widehat \calZ(0,v) \ = \ \left( \frac{ \prod_{p|N}(p+1)}{24} \right) \cdot C'_0(v,0,\Phi_2^{\calL}).
		\end{equation}
		\begin{proof} Recall that by definition,
			\begin{equation}
			\widehat \calZ(0,v)  = \widehat \omega \cdot \widehat \omega + \left( 0, \log \det v \cdot [ \Omega] \right),
			\end{equation}
			where
			\begin{equation}
			\widehat \omega = - 2 \widehat \omega_N - \sum_{p|N} \widehat \calX^0_p + (0, \log N)  ,
			\end{equation}
			cf.\ \eqref{eqn:widehat L def}, and we set $\Omega = \frac{dx \wedge dy}{2 \pi y^2}$.
			
			The arithmetic degree $ \langle \widehat \omega_N , \widehat \omega_N  \rangle= \widehat \deg \, \widehat \omega_N \cdot \widehat \omega_N$ was computed independently by K\"uhn \cite{Kuhn} and Bost; adjusting for the normalization of the metric \eqref{eqn:metric}, which differs by a multiplicative constant from the normalization of \cite{Kuhn}, and a factor of 2 because our degree is `stacky',  the result is
			\begin{equation}
			\langle \widehat \omega_N , \widehat \omega_N  \rangle = \frac{ \prod_{p|N}(p+1)}{24} \left( \frac12 - \frac{\Lambda'(-1)}{\Lambda(-1)} \right).
			\end{equation}
			
			Next, let $W_N$ denote the Atkin-Lehner involution on $\calX_0(N)$, and note that $W_N^*( \widehat \omega_N )= \widehat \omega_N$ and $W_N^*( \widehat\calX_p^0) =  \widehat \calX_p^{\infty}$, so that
			\begin{equation}
			\langle \widehat \omega_N \cdot \widehat \calX_p^0 \rangle  =  \langle W_N^* \widehat \omega_N , W_N^* \widehat \calX_p^0 \rangle = 	\langle \widehat \omega_N \cdot \widehat \calX_p^{\infty} \rangle   = \frac12 \langle \widehat \omega_N , \widehat \calX_p^{0} + \widehat \calX_p^{\infty} \rangle
			\end{equation}
			On the other hand, we have
			\begin{equation}
			\widehat \calX^0_p + \widehat \calX_p^{\infty} = (\mathcal X_0(N)_{\mathbb F_p}, 0) = \widehat{ \mathrm{div}}(p) + (0, 2\log p) =  (0, 2 \log p) \in \ChowHat{1}(\calX);
			\end{equation}
			here we view $p$ as a rational function on $\calX_0(N)$, and so its arithmetic divisor $\widehat{\mathrm{div}}(p) = (\calX_{/ \mathbb F_p}, - \log p^2)$ vanishes in $\ChowHat{1}(\calX)$.
			
			Thus
			\begin{equation}
			\langle \widehat \omega_N,  \sum_{p|N} \widehat \calX_p^0 - (0, \log n) \rangle = \sum_{p|N} \langle \widehat \omega_N, \calX_p^0 - (0, \log p) \rangle  = 0.
			\end{equation}
			
			Finally, we note that
			\begin{equation}
			\langle \widehat \calX_p^0, (0, \log N) \rangle = \langle (0, \log N), (0, \log N) \rangle = 0
			\end{equation} and $\langle \widehat \calX_p^0,  \calX_q^0 \rangle = 0$ if $p \neq q$. Moreover, by \cite[Lemma 7.2]{DuYang}, we have
			\begin{equation}
			\langle \widehat \calX_p^0, \widehat \calX_p^0 \rangle = - \frac{ \prod_{q|N}(q+1)}{24} \frac{p-1}{p+1} \log p;
			\end{equation}			
			again, this formula differs from \emph{loc.\ cit.} by a factor of 2, because we are using the `stacky' degree.
			
			Putting everything together, we find that
			\begin{equation}
			\langle \widehat \omega, \widehat \omega \rangle = 4 \langle \widehat \omega_N, \widehat \omega_N \rangle + \sum_{p|N} \langle \widehat \calX_p^0, \widehat \calX_p^0 \rangle =   \frac{ \prod_{q|N}(1+q)}{24}   \left( 2 - 4 \frac{\Lambda'(-1)}{\Lambda(-1)}  - \sum_{p|N} \frac{ 1- p}{1+p} \log p \right).
			\end{equation}
			Finally, we observe that
			\begin{equation}
			\widehat{\mathrm{deg}} (0, \log \det v \cdot  [\Omega]) = \frac{\log \det v}{2} \int_{[\Gamma_0(N) \backslash \mathbb H]} \frac{dx \wedge dy}{2 \pi y^2} = \log \det v \cdot \frac{ \prod_{q|N} (q+1)}{24},
			\end{equation}
			and the theorem follows.
		\end{proof}
	\end{corollary}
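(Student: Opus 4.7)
The plan is to unravel the definition $\widehat\calZ(0,v) = \widehat\omega\cdot\widehat\omega + (0, \log\det v\cdot[\Omega])$ and to compute each contribution separately, then match the result against the explicit formula for $C'_0(v,0,\Phi_2^{\calL})$ provided by \Cref{prop:deriv const term}. The archimedean piece is immediate: using $\Omega = \frac{dx\wedge dy}{2\pi y^2}$, the standard modular-curve volume, and the stacky factor of $1/2$ from \Cref{rmk:stacky 1/2}, one obtains
$$\widehat{\mathrm{deg}}\bigl(0,\log\det v\cdot[\Omega]\bigr) \ = \ \frac{\prod_{p|N}(p+1)}{24}\cdot\log\det v,$$
which already accounts for the $\log\det v$ term on the right-hand side of \Cref{prop:deriv const term}.

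The substantive work is computing $\langle\widehat\omega,\widehat\omega\rangle$ by expanding $\widehat\omega = -2\widehat\omega_N - \sum_{p|N}\widehat\calX_p^0 + (0,\log N)$ as in \eqref{eqn:widehat L def}. Three inputs drive the calculation. First, the K\"uhn--Bost self-intersection of the Hodge bundle gives $\langle\widehat\omega_N,\widehat\omega_N\rangle = \frac{\prod_{p|N}(p+1)}{24}(\tfrac12 - \Lambda'(-1)/\Lambda(-1))$, after adjusting for our metric normalization \eqref{eqn:metric} and the factor of $2$ from stackiness. Second, \cite[Lemma 7.2]{DuYang} supplies $\langle\widehat\calX_p^0,\widehat\calX_p^0\rangle = -\frac{\prod_{q|N}(q+1)}{24}\cdot\frac{p-1}{p+1}\log p$. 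Third, the relation $\widehat\calX_p^0 + \widehat\calX_p^\infty = \widehat{\mathrm{div}}(p) + (0,2\log p) = (0,2\log p)$ in $\ChowHat{1}(\calX)$, combined with the Atkin--Lehner involution $W_N$---which fixes $\widehat\omega_N$ and swaps $\widehat\calX_p^0 \leftrightarrow \widehat\calX_p^\infty$---forces $\langle\widehat\omega_N,\widehat\calX_p^0\rangle = \langle\widehat\omega_N,(0,\log p)\rangle$. Consequently the cross terms coupling $\widehat\omega_N$ to $-\sum_{p|N}\widehat\calX_p^0$ and to $(0,\log N)$ cancel in pairs.

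The remaining contributions are easy: pairings of two purely-archimedean classes vanish for dimension reasons, $\langle\widehat\calX_p^0,(0,\log N)\rangle = 0$, and $\langle\widehat\calX_p^0,\widehat\calX_q^0\rangle = 0$ for $p\neq q$ by disjointness of the special fibres. Assembling what survives yields
$$\langle\widehat\omega,\widehat\omega\rangle \ = \ 4\langle\widehat\omega_N,\widehat\omega_N\rangle \ + \ \sum_{p|N}\langle\widehat\calX_p^0,\widehat\calX_p^0\rangle \ = \ \frac{\prod_{q|N}(q+1)}{24}\left(2 - 4\frac{\Lambda'(-1)}{\Lambda(-1)} - \sum_{p|N}\frac{1-p}{1+p}\log p\right).$$
Adding this to the archimedean contribution computed above reproduces the right-hand side of \Cref{prop:deriv const term} exactly.

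The main obstacle I expect is purely bookkeeping: keeping track simultaneously of the sign and factor from $-2\widehat\omega_N$ (which produces a $4$ in front of the K\"uhn--Bost term), the multiplicative constant relating our metric \eqref{eqn:metric} to K\"uhn's normalization, and the stacky factor of $1/2$ imposed by \Cref{rmk:stacky 1/2}. The geometry itself is clean: once Atkin--Lehner is used to absorb the $(0,\log N)$ correction into the vertical cross terms, the whole computation reduces to the two known self-intersection numbers.
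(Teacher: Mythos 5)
Your proposal is correct and follows essentially the same route as the paper: expand $\widehat\omega\cdot\widehat\omega$ using the same three inputs (K\"uhn--Bost for $\langle\widehat\omega_N,\widehat\omega_N\rangle$, Du--Yang for $\langle\widehat\calX_p^0,\widehat\calX_p^0\rangle$, and Atkin--Lehner together with $\widehat\calX_p^0+\widehat\calX_p^\infty = (0,2\log p)$ to cancel the cross terms), compute the archimedean volume term, and match against \Cref{prop:deriv const term}. The bookkeeping of signs, the factor of $4$ from $-2\widehat\omega_N$, and the stacky $\tfrac12$ are all handled exactly as in the paper's argument.
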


	\section{Comparison of two Eisenstein series}
	In addition to the lattice $\calL$, there is another lattice which naturally parameterizes the modular curve $X_0(N)$ over $\C$. It is
	$$
	L=\{ A= \begin{pmatrix} a & b \\ N c & -a \end{pmatrix}:\, a, b, c \in \Z\}, \quad Q(A) = \det A =-a^2 - N bc.
	$$
	We set $V = L \otimes_{\bbZ} \bbQ$. Our aim in this section is to compare the incoherent Eisenstein series attached to $L$ with the Eisenstein series $E(\tau, s, \Phi_2^{\calL})$ appearing in our main theorem.

	For $r = 1,2$, let $\Phi^L_r (s) = \otimes_{v \leq \infty} \Phi_{r,v}^L(s) $ be the incoherent standard section in $I_r(s, \chi')$ with $\Phi^L_{r, \infty} = \Phi_{r, \infty}^{3/2}$ and, for $v < \infty$, the component $\Phi^L_{r,v}$ is the standard section attached to $L_v$ via the Rallis map, cf.\ \Cref{def:calL global section}; here the character $\chi'$ is given by $\chi'(x) = (-1, \,x) _\A$.

	We will also require some auxilliary sections. For each prime $\ell$ dividing $N$ let $L^{(\ell)}$ denote the quadratic lattice defined in \eqref{eqn:Lp lattice def}; recall that by definition, $L^{(\ell)}$ is the set of traceless elements in an Eichler order of level $N/\ell$ contained in the definite quaternion algebra ramified exactly at $\ell$ and $\infty$, and the quadratic form is the reduced norm.
	
	For $r=1,2$, let $\Phi^{(\ell)}_r(s) \in I_r(s, \chi')$ denote the  section with $\Phi^{(\ell)}_{r, \infty}(s) = \Phi^{3/2}_{r, \infty}(s)$, and for $v < \infty$, the local section $\Phi^{(\ell)}_{r,v}(s) $ is the standard section associated to $L^{(\ell)}_v$. Note that this section is coherent, in the sense of \cite{KudlaCentralDerivs}.
	
	Finally, for $\ell | N$, we set
	\begin{equation}
	L^{sp, (\ell)}  = \{ A= \begin{pmatrix} a & b \\ (N/{\ell}) c & -a \end{pmatrix}:\, a, b, c \in \Z\}, \quad Q(A) = \det A =-a^2 - (N/\ell) bc.
	\end{equation}
	In the same manner as above, we obtain an incoherent section $\Phi_r^{sp, (\ell)}$ with $\Phi_{r,\infty}^{sp, (\ell)} = \Phi_{r,\infty}^{3/2}$ and where at finite places, $\Phi_{r,v}^{sp, (\ell)}$ is the standard section associated to $L^{sp, (\ell)}_v$.
	
	For convenience, we write
	\begin{equation}
	\calV = \calL \otimes_{\bbZ} \bbQ, \qquad V = L \otimes_{\bbZ} \bbQ, \qquad V^{(\ell)} = L^{(\ell)} \otimes_{\bbZ} \bbQ \qquad \text{and} \qquad V^{sp, (\ell)} = L^{sp, (\ell)} \otimes_{\bbZ} \bbQ.
	\end{equation}

	Our first step is a comparison of Whittaker functionals in genus two.

	\begin{proposition} \label{prop:alt lattice rk 2} Suppose $T \in \Sym_2(\bbZ)^{\vee}$.
		
		\begin{enumerate}[(i)]
			\item If $q \nmid N$, then
			\[
			\frac{W_{T,q}(e, s, \Phi^{\calL}_{2,q})}{\gamma_q(\calV)^{2}} = \frac{W_{NT,q}(e, s, \Phi^{L}_{2,q})}{\gamma_q(V)^{2}} =  \frac{W_{NT,q}(e, s, \Phi^{(\ell)}_{2,q})}{\gamma_q(V^{(\ell)})^{2}}
			\]
			with the last identity holding for any $\ell | N$.
			\item If $q | N$, then
			\[
			\frac{W_{T,q}(e, 0, \Phi^{\calL}_{2,q})}{\gamma_q(\calV)^{2}} = \frac{W_{NT,q}(e, 0, \Phi^{L}_{2,q})}{\gamma_q(V)^{2}} =  \frac{W_{NT,q}(e, 0, \Phi^{(\ell)}_{2,q})}{\gamma_q(V^{(\ell)})^{2}}
			\]
			with the last identity holding for $\ell \neq q$.
			\item If $q|N$, then
			\begin{align*}
			&\frac{ W'_{T,q}(e , 0, \Phi^{\calL}_{2,q})}{\gamma_q(\calV)^2} - \frac{ W'_{NT,q}(e , 0, \Phi^{L}_{2,q})}{\gamma_q(V)^2} \\
			&= \left( \frac{q-1}{2(q+1)} \cdot \frac{W_{NT, q}(e, 0, \Phi_q^{(q)})}{\gamma_q(V^{(q)})^2} + \frac{3 q - 1}{2(q-1)} \frac{W_{NT,q}(e, 0, \Phi_q^L)}{ \gamma_q(V)^2} - \frac{2}{q^2-1} \frac{W_{NT,q}(e, 0, \Phi_q^{sp, (q)})}{\gamma_q(V^{sp,(q)})^2} \right) \log q
			\end{align*}
			\item Let $\tau \in \mathbb H_2$ and $g_{\tau,\infty} \in G_{2, \infty}$ as in \eqref{eqn:g_tau def}. Then for any $\Phi \in I_{2, \infty}(s, \chi'_{\infty})$, we have
			\[
			W_{T, \infty}(g_{N\tau, \infty}, s, \Phi) = N^{-s + \frac32} W_{NT, \infty}(g_{\tau,\infty}, s, \Phi).
			\]
			
			\item  Globally, we have
			\begin{align*}
			N^{-\frac32} \,	W'_T(g_{N\tau}, 0, \Phi^{\calL}_2)  &= W_{NT}'(g_{\tau}, 0, \Phi^L_2) + \left( \sum_{\ell|N}  \frac{\ell + 1}{2(\ell - 1)} \log \ell \right) W_{NT}(g_{\tau}, 0, \Phi^L_2)  \\
			& \qquad + \sum_{\ell|N} \left(  \frac{\ell-1}{2(\ell+1)} W_{NT}(g_{\tau}, 0, \Phi_2^{(\ell)}) - \frac{2}{ \ell^2-1} W_{NT}(g_{\tau}, 0, \Phi^{sp,(\ell)}_2)	\right) \log \ell	\end{align*}
		\end{enumerate}
		\begin{proof}
			For any primes $\ell, q$ with $q \neq \ell$, the lattices $L_q$ and $L^{(\ell)}_q$ are isometric. Hence parts $(i)$ and $(ii)$ of the proposition follow immediately from \Cref{lem:Whittaker values shift}.
			
			To prove \textit{(iii)}, we  work at the level of representation densities. First, a direct computation using  Lemma \ref{lem:rep dens relation} and (\ref{eq:Lp}) gives
			\begin{equation}  \label{eqn:local ram Whittaker 1}
			(X-q) \alpha(X, NT, L_q) + (q^2-q) X^2 \alpha(X, T, \calL_q)
			= (X-1) \left[ q^2 \alpha(X, NT, L^0_{q}) + (q-1) (X-q)\right]
			\end{equation}
			where, changing notation slightly, we let $L^0_{q} = M_2(\Z_q)^{\mathrm{tr} =0}$ with quadratic form
			$Q(x) =\det x$. On the other hand, \cite[Corollary 8.4]{YangLocal} implies that
			\begin{equation} \label{eqn:local ram Whittaker 2}
			q^2 \alpha_q(X, NT, L^0_q) = \left( \frac{1-q}2 \right)\alpha_q(X, NT, L^{(q)}_q)  + \left( \frac{1+q}2 \right)\alpha_q(X, NT, L_q)+ q^2 - 1.
			\end{equation}
			Substituting this into \eqref{eqn:local ram Whittaker 1} and rearranging gives
			\begin{align}
			\left( \frac{X+1}2 \right) \alpha_q(X, NT, L_q) - qX^2\alpha_q(X, T, \calL_q) = \left( \frac{X-1}2 \right) \left[ \alpha_q(X, NT, L_q^{(q)}) - 2(X+1) \right] .
			\end{align}
			Differentiating with respect to $X$ and substituting $X=1$, we have
			\begin{equation} \label{eqn:local ram Whittaker 3}
			\alpha_q'(1, NT, L_q) - q \alpha'_q(1, T,  \calL_q)
			= \frac{ \alpha_q(1, NT, L_q^{(q)}) - \alpha_q(1, NT, L_q)}{2} +2q \alpha_q(1, T, \calL_q) - 2.
			\end{equation}
			To manipulate this expression further, we take $X =1 $ in \eqref{eqn:local ram Whittaker 1} and \eqref{eqn:local ram Whittaker 2}, which yields the relations
			\begin{equation}
			\alpha_q(1, NT, L_q ) = q \alpha_q(1, T, \calL_q)
			\end{equation}
			and
			\begin{align}
			2 &= \frac{2}{q^2-1}	(q^2 -1)   \\
			&= 2 \frac{q^2}{q^2-1} \alpha_q(1, NT, L_q^0)  + \left( \frac{1}{q+1} \right)\alpha_q(1, NT, L^{(q)}_q)  - \left( \frac{1}{q-1} \right)\alpha_q(1, NT, L_q).
			\end{align}
			Substituting these identities back into \eqref{eqn:local ram Whittaker 3} gives
			\begin{align}
			\alpha'_q(1, NT, L_q) - q \alpha'_q(1, T, \calL_q) = \frac{q-1}{2(q+1)}\alpha_q(1, NT, L_{q}^{(q)}) &+ \frac{3q - 1}{2(q-1)} \alpha_q(1, NT, L_q)  \\
			& \qquad	- \frac{2q^2}{q^2-1} \alpha_q(1, NT, L_q^0).
			\end{align}
			Finally, applying \eqref{eqn:Whittaker rep dens relation general}, we obtain  \textit{(iii)}.
			
			To prove \textit{(iv)}, we first observe that for any $b \in \Sym_2(\bbR)$, we have
			\begin{equation}
			w \, n(b) \, g_{N\tau} =  w\, n(b) \, m(\sqrt{N}) g_{\tau} = w \, m(\sqrt{N}) \, n( N^{-1} b) g_{\tau} = m(\sqrt{N^{-1}})\, w \, n(N^{-1}b) g_{\tau},
			\end{equation}
			where the notation is as in \eqref{eqn:Siegel parabolic defns}.
			Thus, for any $\Phi \in I_{2,\infty}(s, \chi'_{\infty})$, we have
			\begin{align}
			W_{T, \infty }(g_{N\tau}, s, \Phi) & = \int_{b \in \Sym_2(\bbR)} \Phi(w n(b) g_{N\tau},s) e^{-2 \pi i \mathrm{tr} Tb} \,  db \\
			& = \int_{b \in \Sym_2(\bbR)} \Phi(m(\sqrt{N^{-1}})\, w \, n(N^{-1}b) g_{\tau},, s) \, e^{-2 \pi i \mathrm{tr} Tb} \, db \\
			&= \chi_{\calV, \infty}(N^{-1}) N^{-s - \frac32}  \int_{b \in \Sym_2(\bbR)} \Phi(w n(N^{-1}b)   g_{\tau}, s) \, e^{-2 \pi i \mathrm{tr} Tb} \, db \\
			&=  N^{-s - \frac32} \cdot N^3  \int_{\Sym_2(\bbR)}  \Phi(w n(b) g_{\tau}, s) \, e^{-2 \pi N \mathrm{tr} Tb} \, db \\
			&= N^{-s + \frac32} W_{NT, \infty}(g_{\tau}, s, \Phi),
			\end{align}
			where in the second-to-last line, we used the fact that $\chi_{\calV, \infty}(\sqrt{N^{-1}}) = (\sqrt{N^{-1}}, - N)_{\infty} = 1$, and applied the change of variables $b \mapsto Nb$.
			
			To prove \textit{(v)}, recall that the local Weil indices, for any quadratic space $W$ over $\bbQ$, satisfy the product formula $\prod_{q \leq \infty} \gamma_q(W)  = 1$. Thus, applying part \textit{(i) -- (iv)} of the proposition, we have
			\begin{align}
			&N^{-\frac32} \,	W'_T (g_{N\tau}, 0, \Phi^{\calL}_2) \notag \\
			&= \left\{ W_{NT}'(g_{\tau}, 0, \Phi^L_2)   + \left( \sum_{\ell|N}  \frac{\ell + 1}{2(\ell - 1)} \log \ell \right) W_{NT}(g_{\tau}, 0, \Phi^L_2) \right\}  \left( \frac{ \gamma_{\infty}(V)}{\gamma_{\infty}(\calV)} \right)^2 \notag\\
			& \qquad + \sum_{\ell|N} \left(  \frac{\ell-1}{2(\ell+1)} W_{NT}(g_{\tau}, 0, \Phi_2^{(\ell)})  \left( \frac{ \gamma_{\infty}(V^{(\ell)})}{\gamma_{\infty}(\calV)} \right)^2 - \frac{2}{\ell(\ell^2-1)} W_{NT}(g_{\tau}, 0, \Phi^{sp,(\ell)}_2) \left( \frac{ \gamma_{\infty}(V}{\gamma_{\infty}(\calV)} \right)^2	\right) \log \ell	
			\end{align}
			Note that $V_{\infty} \simeq \calV_{\infty} \simeq V^{sp, (\ell)}_{\infty}$, and by \cite[p.\ 330]{KRYbook}, we have $\gamma_{\infty}(V) = -i = - \gamma_{\infty}(V^{(\ell)})$. Thus
			\begin{equation}
			\left( \frac{ \gamma_{\infty}(V)}{\gamma_{\infty}(\calV)} \right)^2 = \left( \frac{ \gamma_{\infty}(V^{(\ell)})}{\gamma_{\infty}(\calV)} \right)^2 = 1
			\end{equation}
			and the proposition follows.
		\end{proof}
	\end{proposition}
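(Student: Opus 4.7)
The overall strategy is to apply the Rallis-type identity \eqref{eqn:Whittaker rep dens relation general} at each finite place to convert every Whittaker functional into an explicit representation density $\alpha_q(X, T, L)|_{X = q^{-s}}$, weighted by a Weil index and a lattice discriminant. Each of the five assertions then becomes a polynomial identity (or a derivative thereof) comparing representation densities of the four lattices $\calL_q$, $L_q$, $L_q^{(\ell)}$, $L_q^{sp,(\ell)}$.

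For parts \textit{(i)} and \textit{(ii)} the plan is as follows. At a prime $q \nmid N$ the scalar $N$ is a unit, so $(\calL_q, N\det)$, $(L_q, \det)$ and $(L^{(\ell)}_q,\det)$ are literally isometric after the rescaling $T \leftrightarrow NT$; their representation densities coincide by \cite[Proposition 8.6]{YangLocal} and their discriminants agree, which gives \textit{(i)} identically in $s$. For \textit{(ii)}, at $q|N$ these isometries fail, but \Cref{lem:rep dens relation} and \eqref{eq:Lp} give polynomial comparisons of $\alpha_q(X, T, \calL_q)$ and $\alpha_q(X, NT, L_q)$ with the common reference $\alpha_q(X, NT, L^0_q)$. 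Specializing to $X = 1$, the polynomial corrections involving $R_1(X, NT, L_0)$ acquire a factor of $(X-1)$ and drop out; a short discriminant bookkeeping then yields the equality of normalized Whittaker values at $s = 0$.

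Part \textit{(iii)} will be the main obstacle, since it concerns derivatives at $X = 1$ and is sensitive to the very terms that vanished in \textit{(ii)}. The plan is to combine \Cref{lem:rep dens relation} with \eqref{eq:Lp} to eliminate $R_1(X, NT, L_0)$, then apply \cite[Corollary 8.4]{YangLocal} to further eliminate $\alpha_q(X, NT, L^0_q)$ in favor of $\alpha_q(X, NT, L^{(q)}_q)$ and $\alpha_q(X, NT, L_q)$. This should yield a clean polynomial identity of the shape
\[\tfrac{X+1}{2}\,\alpha_q(X, NT, L_q) \;-\; qX^2 \,\alpha_q(X, T, \calL_q) \;=\; \tfrac{X-1}{2}\,\bigl[\alpha_q(X, NT, L_q^{(q)}) - 2(X+1)\bigr].\]
Differentiating at $X = 1$, simplifying with the value identity $\alpha_q(1, NT, L_q) = q\,\alpha_q(1, T, \calL_q)$ obtained from \textit{(ii)}, and identifying $L^{sp,(q)}_q \cong L^0_q$ (since $N/q \in \Z_q^{\times}$), will produce the formula in \textit{(iii)} after one final application of \eqref{eqn:Whittaker rep dens relation general}. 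The delicate step is tracking the four lattice discriminants $[\calL_q^\vee:\calL_q]$, $[L_q^\vee:L_q]$, $[(L^{(q)}_q)^\vee:L^{(q)}_q]$, $[(L^{sp,(q)}_q)^\vee:L^{sp,(q)}_q]$ consistently, since each contributes a different power of $q$ and small errors would propagate into the rational coefficients $\frac{q-1}{2(q+1)}$, $\frac{3q-1}{2(q-1)}$, $\frac{-2}{q^2-1}$.

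Finally, \textit{(iv)} follows from the identity $w\,n(b)\,m(\sqrt{N}) = m(\sqrt{N^{-1}})\,w\,n(N^{-1}b)$ in $G_{2,\infty}$: substituting into the Whittaker integral, changing variables $b \mapsto Nb$ (Jacobian $N^3$), and applying the transformation law of $\Phi$ under $m(\sqrt{N^{-1}})$ together with $\chi'_\infty(\sqrt{N^{-1}}) = 1$ gives the scaling factor $N^{-s+3/2}$. Part \textit{(v)} is then an assembly: taking the product of local Whittaker functionals over all places, inserting the archimedean scaling from \textit{(iv)}, and invoking the product formula $\prod_v \gamma_v(W) = 1$ for each of $\calV$, $V$, $V^{(\ell)}$, $V^{sp,(\ell)}$ cancels all finite Weil indices. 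What remains are archimedean ratios, and by \cite[p.\ 330]{KRYbook} we have $\gamma_\infty(V) = \gamma_\infty(\calV) = \gamma_\infty(V^{sp,(\ell)}) = -i$ and $\gamma_\infty(V^{(\ell)}) = +i$, so all squared ratios equal $1$ and the stated global identity falls out.
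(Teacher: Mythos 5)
Your proposal is correct and follows essentially the same route as the paper's proof: you convert each local Whittaker functional to a representation density via the Rallis identity \eqref{eqn:Whittaker rep dens relation general}, derive the identical polynomial identity $\tfrac{X+1}{2}\,\alpha_q(X, NT, L_q) - qX^2 \,\alpha_q(X, T, \calL_q) = \tfrac{X-1}{2}\,[\alpha_q(X, NT, L_q^{(q)}) - 2(X+1)]$ from \Cref{lem:rep dens relation}, \eqref{eq:Lp}, and \cite[Corollary 8.4]{YangLocal}, differentiate at $X=1$, and then assemble the global statement by the product formula for Weil indices together with the archimedean scaling computation in $(iv)$. The only cosmetic difference is that you cite \cite[Proposition 8.6]{YangLocal} directly for $(i)$ where the paper routes through its \Cref{lem:Whittaker values shift} (which rests on the same density comparison), and you make explicit the identification $L^{sp,(q)}_q \cong L^0_q$ and the discriminant bookkeeping $[\calL_q^\vee\!:\!\calL_q]=q$, $[L_q^\vee\!:\!L_q]=[(L_q^{(q)})^\vee\!:\!L_q^{(q)}]=q^2$, $[(L_q^0)^\vee\!:\!L_q^0]=1$, which the paper leaves implicit; both of those clarifications check out.
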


	To investigate the  degenerate terms, we require the genus one analogue of \Cref{prop:alt lattice rk 2}.
	
	\begin{proposition}  \label{prop:alt lattice rk 1}
		For the lattice $\calL$, let $\calV = \calL \otimes \bbQ$, and for a prime $q \leq \infty$, define
		\[
		c_q(\calV) = \chi_{\calV, q}(-1) \gamma_q(\calV).
		\]
		We define $c_q(V)$, $c_q(V^{(\ell)})$ and $c_q(V^{sp, (\ell)})$ in a similar way.
		
		Let $t \in \mathbb Z$. Then
		\begin{enumerate}[(i)]
			\item Suppose $q \nmid N$ and $q < \infty$. Then
			\[
			\frac{W_{t,q}(e, s, \Phi^{\calL}_{1,q})}{c_q(\calV)} = 					\frac{W_{Nt,q}(e, s, \Phi^{L}_{1,q})}{c_q(V)} = 					\frac{W_{Nt,q}(e, s, \Phi^{(\ell)}_{1,q})}{c_q(V^{(\ell)})}
			\]
			with the last identity holding for any $\ell | N$.
			\item Suppose $q | N$. Then
			\[
			\frac{	W_{t,q}(e, \frac12, \Phi^{\calL}_{1,q})}{c_q(\calV)} =q^{\frac12} \frac{W_{Nt, q}(e,  \frac12, \Phi^{L}_{1,q})}{c_q(V)}  = q^{\frac12} \frac{W_{Nt, q}(e,  \frac12, \Phi^{(\ell)}_{1,q})}{c_q(V^{(\ell)})}
			\]
			with the last identity holding for all $\ell \neq q$.
			\item Suppose $q|N$. Then
			\begin{align*}
			&q^{-\frac12} 	\frac{	W'_{t,q}(e, \frac12, \Phi^{\calL}_{1,q})}{c_q(\calV)}  =	 \frac{	W'_{Nt,q}(e, \frac12, \Phi^{L}_{1,q})}{c_q(V)} \\
			&  +\left( \frac{q+1}{2(q-1)} 	\frac{	W_{Nt,q}(e, \frac12, \Phi^{L}_{1,q})}{c_q(V)} - \frac{q-1}{2(q+1)} 	\frac{	W_{Nt,q}(e, \frac12, \Phi^{(q)}_{1,q})}{c_q(V^{(q)})}  - \frac{2}{q^2 - 1}   	\frac{	W_{Nt,q}(e, \frac12, \Phi^{sp,(q)}_{1,q})}{c_q(V^{sp,(q)})}\right) \log q
			\end{align*}
			\item Let $\tau = u + iv \in \mathbb H$ and $g_{\tau,\infty} = [n(u) m(\sqrt v), 1] \in G_{1,\infty}$ denote the corresponding group element. Then for any $\Phi \in I_{1, \infty}(s, \chi'_{\infty})$, we have
			\[
			W_{t, \infty}(g_{N\tau, \infty}, s, \Phi) = N^{-\frac{s}2 + \frac12} W_{Nt, \infty}(g_{\tau,\infty}, s, \Phi).
			\]
			
			\item  Globally,
			\begin{align*}
			N^{-3/4}W_{t}'(g_{N\tau}, \tfrac12, \Phi^{\calL}_1)  &= W_{Nt}'(g_{\tau}, \tfrac12, \Phi^L_1) + \left( \sum_{\ell|N} \frac{\log \ell }{\ell-1} \right)W_{Nt}(g_{\tau}, \frac12, \Phi^L_1) \\
			& \qquad  + \sum_{\ell|N} \left( \frac{\ell-1}{2(\ell+1)} W_{Nt}(g_{\tau}, \tfrac12, \Phi^{(\ell)}_1) - \frac{2}{\ell^2 -1 } W_{Nt}(g_{\tau}, \tfrac12, \Phi^{sp, (\ell)}_1) \right) \log \ell
			\end{align*}
		\end{enumerate}
		\begin{proof}
			First, suppose that $q \nmid N$. The representation density $\alpha_q(X, t, \calL_q)$ was computed previously, cf.\  \eqref{eqn:alpha rank 1}. On the other hand, we have that for any $\ell | N$, the quadratic spaces $L_q$ and $L^{(\ell)}_q$ are both isometric to $(M_2(\bbZ_q)^{tr=0}, \det)$. Applying \cite[ Proposition 8.3]{YangLocal}, a short computation reveals
			\begin{equation}
			\alpha_q(X, t, \calL_q) = \alpha_q(X, Nt, L_q) = \alpha_q(X, Nt, L^{(\ell)}_q).
			\end{equation}
			Finally, applying the identity \eqref{eqn:whittaker rep dens rank 1}, which relates the representation densities to Whittaker functionals, yields \textit{(i)}.
			
			When $q|N$, the representation density $\alpha_q(X, t, \calL_q)$ can be computed explicitly using  \cite[Theorem 3.1]{YangLocal}; the result is recorded in equations \eqref{eqn:alpha rank 1 level a odd} and \eqref{eqn:alpha rank 1 level a even} above. Comparing these formulas with the explicit formulas in Corollary 8.2 and Proposition 8.3 of \cite{YangLocal}, we find
			\begin{equation} \label{eqn:alt alpha q divides N}
			\alpha_q(X, t, \calL_q) = 1 - X^{-1} + X^{-1} \alpha_q(X, Nt, L_q)
			\end{equation}
			Evaluating at $X=1$ gives
			\begin{equation}
			\alpha_q(1, t, \calL_q) = \alpha_q(1, Nt, L_q) = \alpha_q(1, Nt, L_q^{(\ell)})
			\end{equation}
			for $\ell \neq q$, and hence, applying \eqref{eqn:whittaker rep dens rank 1} again gives part \textit{(ii)} of the proposition.
			
			To prove \text{(iii)} we differentiate  \eqref{eqn:alt alpha q divides N}  and evaluate at $X=1$, which yields
			\begin{equation}
			\alpha'_q(1, t, \calL_q) = \alpha' _q(1, Nt, L_q) - \alpha_q(1, Nt, L_q) + 1.
			\end{equation}
			
			By \cite[Corollary 8.2]{YangLocal}, we have $\alpha_q(X, Nt, L_q) + \alpha_q(X, Nt, L_q^{(q)}) = 2$; substituting this identity above, we conclude that
			\begin{equation}
			\alpha'_q(1, t, \calL_q) = \alpha' _q(1, Nt, L_q) - \frac12 \alpha_q(1, Nt, L_q) + \frac12 \alpha_q(1, Nt, L^{(q)}_q).
			\end{equation}
			
			On the other hand, the same corollary implies that
			\begin{equation}
			\frac{2q }{q^2-1} \alpha_q(1, Nt, L^0_q) -  \frac{\alpha_q(1, Nt, L_q )}{q-1} -  \frac{\alpha_q(1, Nt, L_q^{(q)})}{q+1} = 0.
			\end{equation}
			Thus
			\begin{align}
			\alpha'_q(1, t, \calL_q) - \alpha'_q(1, Nt, L)  &= - \frac{ q+1}{2(q-1)} \alpha_q(1, Nt, L_q) + \frac{q-1}{2(q+1)} \alpha_q(1, Nt, L^{(q)}_q) \\
			& \qquad\qquad  + 	\frac{2q }{q^2-1} \alpha_q(1, Nt, L^0_q).
			\end{align}
			Another appeal to \eqref{eqn:whittaker rep dens rank 1} gives part \textit{(iii)} of the proposition.
			
			Part \textit{(iv)} follows in the same way as the proof of \Cref{prop:alt lattice rk 2}(iv).
			
			Finally, we observe that for any quadratic space $W$, the constants $c_q(W)$ satisfy the product formula $\prod_{q \leq \infty} c_q(W) = 1$. Combining this observation with the preceding parts of the proposition, a short computation yields
			\begin{align}
			N^{-\frac32} W_{t}'(g_{\tau}, \frac12, \Phi^{\calL}) &= \left( W_{Nt}'(g_{\tau}, \tfrac12, \Phi^L_1) + \left( \sum_{\ell|N} \frac{\log \ell }{\ell-1}  \notag \right)W_{Nt}(g_{\tau}, \frac12, \Phi^L_1) \right) \frac{c_{\infty}(V)}{c_{\infty}(\calV)} \\
			& \qquad  + \sum_{\ell|N} \left(  - \frac{\ell-1}{2(\ell+1)} W_{Nt}(g_{\tau}, \tfrac12, \Phi^{(\ell)}_1) \frac{c_{\infty}(V^{(\ell)})}{c_{\infty}(\calV)}  \right) \log \ell \notag \\
			& \qquad+ \left( - \frac{2}{\ell^2 -1 } W_{Nt}(g_{\tau}, \tfrac12, \Phi^{sp, (\ell)}_1) \frac{c_{\infty}(V^{sp,\ell})}{c_{\infty}(\calV)} \right) \log \ell
			\end{align}
			Since $\calV_{\infty} \simeq V_{\infty} \simeq V^{sp, (\ell)}_{\infty}$, we have$c_{\infty}(\calV) = c_{\infty}(V)c_{\infty}(V^{sp, (\ell)}) . $
			Moreover,  we have
			\begin{equation}
			\chi_{\calV, \infty}(-1) = (-1, -1 )_{\infty} = -1 = \chi_{\calV^{(\ell)}, \infty}(-1) \qquad \text{ and } \qquad \gamma_{\infty}(\calV) = - \gamma_{\infty}(V^{(\ell)})
			\end{equation}
			cf.\ \cite[p. 330]{KRYbook}. Thus $c_{\infty}(V^{\ell}) = - c_{\infty}(\calV)$, and the proposition follows.
			
		\end{proof}
	\end{proposition}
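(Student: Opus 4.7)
The strategy closely mirrors that of \Cref{prop:alt lattice rk 2}, adapted to the genus one setting. The starting point is the identity \eqref{eqn:whittaker rep dens rank 1},
\[
W_{t,q}(e,s+\tfrac12,\Phi^{\calL}_{1,q}) \;=\; \chi_{\calV,q}(-1)\,\gamma_q(\calV)\,[\calL_q^{\vee}:\calL_q]^{-1/2}\,\alpha_q(q^{-s},t,\calL_q),
\]
together with its analogues for $L_q$, $L^{(\ell)}_q$, and $L^{sp,(\ell)}_q$. The constant $c_q(\cdot)$ in the statement absorbs $\chi_{\ast,q}(-1)\gamma_q(\ast)$ precisely, while the dual-lattice index contributes $1$ at primes $q \nmid N$, $q^{-1/2}$ for $\calL_q$ at $q\mid N$, and $q^{-1}$ for $L_q$ at $q\mid N$ (and similarly for the Eichler variants).

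For (i), at primes $q\nmid N$, all three lattices $\calL_q$, $L_q$, $L^{(\ell)}_q$ are unimodular and, after absorbing the $q$-adic unit $N$ into the argument, each is isometric to $(M_2(\bbZ_q)^{tr=0},\det)$, whence $\alpha_q(X,t,\calL_q) = \alpha_q(X,Nt,L_q) = \alpha_q(X,Nt,L^{(\ell)}_q)$. For (ii) and (iii), at $q\mid N$, I would compare the explicit formulas \eqref{eqn:alpha rank 1 level a odd} and \eqref{eqn:alpha rank 1 level a even} for $\alpha_q(X,t,\calL_q)$ with the expressions for $\alpha_q(X,Nt,L_q)$ and $\alpha_q(X,Nt,L^{(q)}_q)$ given by \cite[Corollary 8.2, Proposition 8.3]{YangLocal}. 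I expect a single polynomial identity of the shape
\[
\alpha_q(X,t,\calL_q) \;=\; 1 - X^{-1} + X^{-1}\alpha_q(X,Nt,L_q)
\]
to emerge, whose evaluation at $X=1$ yields (ii) (the prefactor $q^{1/2}$ coming from the ratio of dual-lattice indices). Differentiating the identity at $X=1$ then gives $\alpha_q'(1,t,\calL_q) - \alpha_q'(1,Nt,L_q)$ as an explicit linear combination of $\alpha_q(1,Nt,L_q)$ and $1$; applying the further relation among $\alpha_q(1,Nt,L_q)$, $\alpha_q(1,Nt,L^{(q)}_q)$, and $\alpha_q(1,Nt,L^{sp,(q)}_q)$ provided by \cite[Corollary 8.2]{YangLocal} recasts the difference into the three-term shape of (iii), the factor $\log q$ arising from $-\tfrac{d}{ds}q^{-s}\big|_{s=0}$.

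Part (iv) is a direct unfolding: using the matrix identity $w\,n(b)\,m(\sqrt{N}) = m(\sqrt{N^{-1}})\,w\,n(N^{-1}b)$ together with the quasi-invariance $\Phi(m(a)g,s) = \chi'_\infty(a)\,|a|^{s+1}\,\Phi(g,s)$ on $\tilde M_{1,\infty}$, the Whittaker integral at $g_{N\tau,\infty}$ is transformed into the one at $g_{\tau,\infty}$ after the substitution $b \mapsto Nb$, yielding the factor $N^{-s/2+1/2}$. Part (v) is then the global assembly: the product formulas $\prod_{v\le\infty}\gamma_v(W)=1$ and $\prod_{v\le\infty}\chi_{W,v}(-1)=1$ applied to each of $\calV$, $V$, $V^{(\ell)}$, $V^{sp,(\ell)}$ trade the product of the finite factors $c_q(\cdot)$ for the reciprocal of the archimedean one; the latter is computed using $\calV_\infty \simeq V_\infty \simeq V^{sp,(\ell)}_\infty$, $\gamma_\infty(V) = -\gamma_\infty(V^{(\ell)})$, and $\chi_{\calV,\infty}(-1) = -1$, cf.\ \cite[p.~330]{KRYbook}.

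The main obstacle will be pinning down the precise three-term combination in (iii): this demands a careful case-by-case comparison of Yang's explicit formulas at $q\mid N$ depending on the parity of $\ord_q(t)$, and meticulous tracking of several scattered $q$-adic units and sign conventions. Once the local derivative identity is established, parts (iv) and (v) are essentially formal consequences and follow the same pattern as in \Cref{prop:alt lattice rk 2} almost verbatim.
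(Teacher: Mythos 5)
Your proposal follows the same route as the paper: reduce to representation densities via the identity \eqref{eqn:whittaker rep dens rank 1}, establish the polynomial relation $\alpha_q(X,t,\calL_q) = 1 - X^{-1} + X^{-1}\alpha_q(X,Nt,L_q)$ at $q\mid N$ from Yang's explicit formulas, evaluate and differentiate at $X=1$ and invoke \cite[Corollary 8.2]{YangLocal} (both the two-term relation $\alpha_q(1,Nt,L_q)+\alpha_q(1,Nt,L_q^{(q)})=2$ and the three-term relation involving $L_q^0$) to produce the three-term shape in (iii), handle (iv) by the matrix manipulation $w\,n(b)\,m(\sqrt N)=m(\sqrt{N^{-1}})\,w\,n(N^{-1}b)$, and assemble (v) via the product formula for $c_q$ together with the archimedean Weil-index comparison from \cite[p.~330]{KRYbook}. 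The only mild imprecision is describing the lattices at $q\nmid N$ as ``unimodular'' (this fails at $q=2$, but the needed identity $\alpha_q(X,t,\calL_q)=\alpha_q(X,Nt,L_q)$ holds there too, as the paper gets from \cite[Proposition 8.3]{YangLocal}); this does not affect the argument.
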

	
	For a section $\Phi \in I_{2}(s, \chi')$, let  $E(g, s, \Phi) = \sum_{T} E_T(g, s, \Phi)$ denote the Fourier expansion of the corresponding Eisenstein series, as in \Cref{sec:main theorem setup}.
	
	\begin{proposition} \label{prop5.3}
		Let $\tau \in \mathbb H_2$. Then for any $T \in \Sym_{2}(\bbZ)^{\vee}$ we have
		\begin{equation}
		N^{- \frac32}E'_T(g_{N\tau}, 0, \Phi^{\calL}_2) = E'_{NT}(g_{\tau}, 0, \Phi^{L}_2) + \sum_{\ell|N} \frac{\ell-1}{2(\ell+1)} E_{NT}(g_\tau 0, \Phi^{(\ell)}_2) \log \ell.
		\end{equation}
	
		\begin{proof}
			
			We note that in general, if $\tau = u+iv\in \mathbb H_2$, we have
				\begin{equation}
					E_{T}(g_{\tau},s, \Phi) = e^{2 \pi i tr(Tu)} E_T(g_v, s, \Phi).
				\end{equation}
			Thus, we may assume that $\tau = iv$ for $v \in \Sym_2(\bbR)_{>0}$, and write $g_{\tau} = g_v$ in the remainder of the proof.

			Now suppose $T$ is non-degenerate. Then for any section $\Phi \in I_2(s, \chi')$, we have
			\begin{equation}
			E_T(g, s, \Phi) = W_T(g, s, \Phi),
			\end{equation}
			cf.\ \Cref{prop:Whittaker dichotomy}. On the other hand, recall that Eisenstein series $E(g, s, \Phi^L_2)$ and $E(g, s, \Phi^{sp, (\ell)}_2)$ are incoherent, in the sense of \cite{KudlaCentralDerivs}, and so they vanish at $s=0$. In particular,
			\begin{equation}
			W_{NT}(g_v, 0, \Phi^{L}) = 0  \qquad \text{and} \qquad   	W_{NT}(g_v, 0, \Phi^{sp, (\ell)}) = 0  .
			\end{equation}
			Thus, the proposition follows from \Cref{prop:alt lattice rk 2}(v) in this case.
			
			Next, suppose that $T$ has rank 1. By \Cref{lem:rank 1 T}, we may choose $\gamma \in \GL_2(\bbZ)$ that $T = {}^t \gamma \psm{0 & \\ & t } \gamma$ for some $t \neq 0$.  Since
			\begin{equation}
			E_{T}(g, s, \Phi) = E_{ {}^t \gamma \psm{0 & \\ & t } \gamma}(g, s, \Phi) = E_{ \psm{0&\\ & t}}  (m(\gamma) g, s, \Phi)
			\end{equation}
			for any section $\Phi \in I_2(s, \chi')$, we may assume without loss of generality that $T = \psm{0  & \\ & t}$. Similarly, replacing $v$ with $\theta v {}^t \theta$ for an appropriate choice of $\theta = \psm{1 & * \\ & 1} \in \GL_2(\bbR)$, we may further assume that
			\begin{equation}
			v = \psm{v_1 & \\ & v_2}
			\end{equation}
			is diagonal.

			For $r = 1$ or $2$, let $\Phi^*_r$ denote any one of the sections $\Phi^{\calL}_r$, $\Phi^{L}_r$, $\Phi^{(\ell)}_r$	or $\Phi^{sp, (\ell)}_r$. Note that in all cases, $\Phi^*_{r, \infty} = \Phi^{\frac32}_{r, \infty}$. In this case, combining \Cref{lem:GS Eis FC rk 1}, \Cref{lem:pullback on Rallis sections} and \eqref{eqn:arch section pullback}, we find
			\begin{equation} \label{eqn:alt section FC rk 1}
			E_T(g_v, s, \Phi^*_2) = v_1^{\frac{s}2 + \frac34}W_{t}\left(g_{v_2}, s + \tfrac12,   \Phi^*_1 \right)  + W_{T}(g_v, s, \Phi^*_2).
			\end{equation}
			In particular,
			\begin{multline}
			N^{-\frac32}	E'_{T}(g_{Nv}, 0 , \Phi^{\calL}_2) =   v_1^{\frac34} \left( \frac{\log (N  v_1)}{2} \right) N^{-\frac34} W_{t}(g_{Nv_2}, \tfrac12, \Phi^{\calL}_1) +   v_1^{\frac34} N^{-\frac34}W'_t(g_{Nv_2}, \tfrac12, \Phi^{\calL}_1) \\
		  + N^{-\frac32} W'_{T}(g_v,0, \Phi^{\calL}_2).
			\end{multline}
			Applying \eqref{eqn:alt section FC rk 1}, \Cref{prop:alt lattice rk 2} and \Cref{prop:alt lattice rk 1}, a short computation gives
			\begin{align*}
			N^{- \frac32}& E'_T(g_{Nv}, 0, \Phi^{\calL}_2)  - E'_{NT}(g_v, 0, \Phi^{L}_2) - \sum_{\ell|N} \frac{\ell-1}{2(\ell+1)} E_{NT}(g_v, 0, \Phi^{(\ell)}_2) \log \ell \\
			&=  \left( \sum_{\ell|N} \frac{\ell + 1}{2(\ell - 1)} \log \ell  \right) \left\{v_1^{\frac34} W_{Nt}(g_{v_2}, \tfrac12, \Phi^L_1)  + W_{NT}(g_v, 0, \Phi^L_2 )\right\}  \\
			& \qquad -    \sum_{\ell|N} \frac{2}{\ell^2-1}  \left\{ v_1^{\frac34}W_{Nt}(g_{v_2}, \tfrac12, \Phi^{sp,(\ell)}_1)  + W_{NT}(g_v, 0, \Phi^{sp,(\ell)}_2) \right\}\log \ell\\
			& =\left( \sum_{\ell|N} \frac{\ell + 1}{2(\ell - 1)} \log \ell  \right) \left\{  E_{NT}(g_v, 0, \Phi^L_2 )\right\}   -    \sum_{\ell|N} \frac{2}{\ell^2-1}  \left\{E_{NT}(g_v, 0, \Phi^{sp,(\ell)}_2) \right\}\log \ell\\
			&= 0
			\end{align*}
			where the last line follows since the sections $\Phi^L_2$ and $\Phi_2^{sp, (\ell)}$ are incoherent, so the Eisenstein series $E(g, s, \Phi^L_2)$ and $E(g, s, \Phi_2^{sp, (\ell)})$ vanish at $s=0$. This proves the proposition in the case that $T$ has rank 1.
			
			Finally, suppose $T= 0$. Recall that for any section $\Phi_2 \in I_2(s, \chi)$, an argument along the lines of \cite[\S 5.9]{KRYbook} (see also \cite[Lemma 2.4]{KudlaRallisSW}) gives
			\begin{equation}
			E_0(g, s, \Phi) = \Phi(g,s) + \bbB(g, s, \Phi) + W_0(g, s, \Phi)
			\end{equation}
			where
			\begin{equation}
			\bbB(g, s, \Phi) = \sum_{\gamma \in \Gamma_{\infty} \backslash \mathrm{SL}_2(\bbZ)} B(m(\gamma)g, s, \Phi)
			\end{equation}
			with
			\begin{equation}
			B(g, s, \Phi) = W_0\left(e, s + \tfrac12, \left( \eta^* \circ r(g) \right) \Phi \right).
			\end{equation}
			A similar argument to the proof of the \Cref{prop:deriv const term} shows that
			\begin{equation}
			\bbB(g_v, 0, \Phi^{(\ell)}) = \bbB(g_v, 0, \Phi^{sp, (\ell)}) = 0
			\end{equation}
			and
			\begin{equation}
			\ord_{s=0} \bbB(g_v, s, \Phi^L) \geq 2 .
			\end{equation}
			Moreover, if $\Phi^*_2$ is any of the sections $\Phi^{\calL}_2$, $\Phi^{L}_2$, $\Phi^{(\ell)}_2$ or $\Phi^{sp, (\ell)}_2$, the definitions  imply the formula
			\begin{equation}
			\Phi^{*}_2(g_v, s)= \det(v)^{\frac{s}2 + \frac34}.
			\end{equation}
			Thus, we find
			\begin{equation}
			E'_0(g_{Nv}, 0, \Phi^\calL_2) = N^{\frac32} \det(v)^{\frac34} \left( \log N + \frac12 \log \det v \right) + W'_0(g_{Nv}, 0, \Phi_2^{\calL}),
			\end{equation}
			and
			\begin{equation}
			E'_0(g_{ v}, 0, \Phi^L_2)  =  \det(v)^{\frac34} \left(  \frac12 \log \det v \right) + W'_0(g_{v}, 0, \Phi_2^{L});
			\end{equation}
			in addition, since $\Phi^L_2$ and $\Phi^{sp,(\ell)}$ are incoherent, we have
			\begin{equation}
			E_0(g_v, 0, \Phi_2^{L}) = E_0(g_v, 0, \Phi_2^{sp,(\ell)}) =  0
			\end{equation}
			so
			\begin{equation}
			W_{0}(g_v, 0, \Phi^{L}) = W_0(g_v,0,\Phi^{sp,(\ell)}) = - \det(v)^{\frac34}.
			\end{equation}
			Thus,  \Cref{prop:alt lattice rk 2} implies that
			\begin{align}
			N^{- \frac32} &	E'_0(g_{Nv}, 0, \Phi_2^{\calL} )  - E'_0(g_v, 0, \Phi_2^L) - \sum_{\ell|N} \frac{\ell-1}{2(\ell+1)} E_0(g_v,0,\Phi^{(\ell)}_2) \log \ell \\
			&= \det(v)^{\frac34} \log N+ \left( \sum_{\ell|N} \frac{\ell+1}{2(\ell-1)} \log \ell \right)W_0(g_v, 0 , \Phi_2^L)   \notag \\
			& \qquad - \sum_{\ell} \frac{2}{\ell^2-1} W_0(g_v, 0, \Phi_2^{sp,(\ell)}) \log \ell- \sum_{\ell} \frac{\ell-1}{2(\ell+1)} \log \ell \cdot \det(v)^{\frac34} \\
			&= \det(v)^{\frac34} \left( \sum_{\ell} 1 - \frac{\ell+1}{2(\ell-1)} + \frac{2}{\ell^2 -1} - \frac{\ell-1}{2(\ell+1)} \right) \log \ell \\
			&= 0 \notag.
			\end{align}
			This implies the proposition for the case $T=0$.
		\end{proof}
	\end{proposition}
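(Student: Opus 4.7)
My plan is to prove Proposition \ref{prop5.3} by splitting into three cases according to $\rank(T) \in \{0,1,2\}$, reducing each to the two local comparison lemmas (\Cref{prop:alt lattice rk 2} and \Cref{prop:alt lattice rk 1}) already in hand. First, since $E_T(g_\tau, s, \Phi) = e^{2\pi i \mathrm{tr}(Tu)} E_T(g_v, s, \Phi)$ for $\tau = u+iv$, I may assume $\tau = iv$ throughout. The overall strategy is: on the ``non-incoherent'' part the identity reduces to a pointwise statement at each place, while the extra error terms from places $\ell \mid N$ get absorbed into the coherent Eisenstein series $E(g,s,\Phi_2^{(\ell)})$ (and any term involving the incoherent auxiliary sections $\Phi_2^L$ or $\Phi_2^{sp,(\ell)}$ vanishes at $s = 0$).

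For $T$ non-degenerate, $E_T = W_T$ by \Cref{prop:Whittaker dichotomy}, and $W_{NT}(g_v, 0, \Phi_2^L) = W_{NT}(g_v, 0, \Phi_2^{sp,(\ell)}) = 0$ since the corresponding Eisenstein series are incoherent. Thus the identity follows directly from \Cref{prop:alt lattice rk 2}(v).

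For $\rank(T) = 1$, I use the $\GL_2(\bbZ)$-invariance of both sides (via the cocycle identity $E_{\,^t\gamma T\gamma}(g,s,\Phi) = E_T(m(\gamma)g, s, \Phi)$) together with \Cref{lem:rank 1 T} to reduce to $T = \psm{0 & \\ & t}$; an analogous adjustment with $\theta = \psm{1 & * \\ & 1}$ reduces $v$ to a diagonal matrix $\psm{v_1 & \\ & v_2}$. For each of the standard sections $\Phi^*_2 \in \{\Phi^{\calL}_2, \Phi^L_2, \Phi^{(\ell)}_2, \Phi^{sp,(\ell)}_2\}$, \Cref{lem:GS Eis FC rk 1} combined with \Cref{lem:pullback on Rallis sections} and \eqref{eqn:arch section pullback} gives a uniform formula
\begin{equation}
E_T(g_v, s, \Phi^*_2) = v_1^{s/2 + 3/4} W_t(g_{v_2}, s + \tfrac12, \Phi^*_1) + W_T(g_v, s, \Phi^*_2).
\end{equation}
Differentiating at $s=0$ on the left-hand side (with $v$ replaced by $Nv$) and substituting the two local identities \Cref{prop:alt lattice rk 2}(v) and \Cref{prop:alt lattice rk 1}(v), the error terms recombine, again using the vanishing $E_{NT}(g_v, 0, \Phi^L_2) = E_{NT}(g_v, 0, \Phi^{sp,(\ell)}_2) = 0$, to yield exactly the claimed identity.

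For $T = 0$, I use the Langlands-type expansion
\begin{equation}
E_0(g, s, \Phi) = \Phi(g, s) + \bbB(g, s, \Phi) + W_0(g, s, \Phi),
\end{equation}
where $\bbB(g,s,\Phi) = \sum_{\gamma \in \Gamma_\infty \backslash \mathrm{SL}_2(\bbZ)} B(m(\gamma)g, s, \Phi)$ and $B(g,s,\Phi) = W_0(e, s+\tfrac12, (\eta^* \circ r(g))\Phi)$. For the intertwining term $\bbB$, the same Epstein-zeta argument used in the proof of \Cref{prop:deriv const term} shows $\bbB(g_v, 0, \Phi^{(\ell)}_2) = \bbB(g_v, 0, \Phi^{sp,(\ell)}_2) = 0$ and $\mathrm{ord}_{s=0} \bbB(g_v, s, \Phi^L_2) \geq 2$, so $\bbB$ contributes nothing to the derivatives or values we care about. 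The term $\Phi^*_2(g_v, s) = \det(v)^{s/2+3/4}$ contributes a simple explicit piece, and for $W_0$, I apply \Cref{prop:alt lattice rk 2}; using $E_0(g_v, 0, \Phi^L_2) = 0$ to evaluate $W_0(g_v, 0, \Phi^L_2) = -\det(v)^{3/4}$ (and similarly for $\Phi^{sp,(\ell)}_2$), the combination
\begin{equation}
\sum_{\ell \mid N} \left(1 - \frac{\ell+1}{2(\ell-1)} + \frac{2}{\ell^2 - 1} - \frac{\ell-1}{2(\ell+1)}\right) \log \ell = 0
\end{equation}
telescopes the remaining terms to match $\det(v)^{3/4} \log N$.

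The main obstacle will be the bookkeeping in the rank $1$ case, since every place $\ell \mid N$ produces a correction, both through the discrepancy between $\Phi^{\calL}_1$ and $\Phi^L_1$ (at the rescaled argument $Nv_2$) and through the genus-two correction, and these must collapse into a single clean sum involving only $\Phi^{(\ell)}_2$. Verifying that the archimedean factors $c_\infty$ and $\gamma_\infty$ and the powers of $N$ line up precisely — in particular the compatibility between the genus one scaling $N^{-s/2+1/2}$ of \Cref{prop:alt lattice rk 1}(iv) and the genus two scaling $N^{-s+3/2}$ of \Cref{prop:alt lattice rk 2}(iv) under the pullback $\eta^*$ — is where all the delicate cancellations happen.
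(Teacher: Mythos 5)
Your proposal follows the paper's proof essentially step for step: the same reduction to $\tau = iv$, the same three-case split by $\rank(T)$, the same appeal to incoherence and to \Cref{prop:alt lattice rk 2}(v) for non-degenerate $T$, the same use of \Cref{lem:GS Eis FC rk 1}, \Cref{lem:pullback on Rallis sections}, \eqref{eqn:arch section pullback} and the two local comparison propositions for rank $1$, and the same Langlands decomposition and telescoping identity for $T = 0$. The approach and the key intermediate identities match the paper's argument.
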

	
	\begin{remark}
		The preceding proposition can be phrased in more classical language, as follows, cf.\  \Cref{sec:main theorem setup}. If $\Phi^*_2$ is one of the sections $\Phi^{\calL}_2, \Phi^{L}_2$ or $\Phi^{(\ell)}_2$, and $\tau \in \bbH_2$, we set
		\begin{equation}
		E(\tau, s, \Phi^*_2) = \det (v )^{-\frac34} E(g_{\tau}, s, \Phi^*_2).
		\end{equation}

		Let
			\begin{equation}
				\Gamma_0(4N) = \left\{  \begin{pmatrix} A & B \\ C & D \end{pmatrix} \in \Sp_r(\bbZ) \ | \ C \equiv 0 \pmod{4N} \right\}
			\end{equation}
		denote the usual congruence subgroup. Then it can be verified that $E(\tau, s, \Phi^*_2)$ transforms like a Siegel modular form of scalar weight $3/2$ and level $\Gamma_0(4N)$ with character $\chi(\gamma) = \mathrm{sgn}\det (d)$, see \cite[\S 8.5.6]{KRYbook} for a more precise formulation.
	
		Let $M_{\frac32}(\Gamma_0(4N))$ denote the space of (non-holomorphic) Siegel modular forms of scalar weight $3/2$ and level $\Gamma_0(4N)$ and character $\chi$, and consider the Hecke operator
		\begin{equation}
		U_N \colon M_{\frac32}(\Gamma_0(4N)) \to M_{\frac32}(\Gamma_0(4N))
		\end{equation}
		given by
		\begin{equation}
		U_N (F)(\tau) = \sum_{u \in \Sym_2(\bbZ/N\bbZ)} F|_{3/2} \psm{1 & u \\ & N} = N^{\frac32} \sum_u F(N^{-1}  \tau + N^{-1} u)
		\end{equation}
		Then the previous proposition can be rewritten in classical terms as the identity
		\begin{equation} \label{eqUp1}
		E'(\tau,0,\Phi^{\calL}_2) = U_N \left( E'(\tau, 0, \Phi^L_2) + \sum_{\ell|N} \frac{\ell - 1}{2 (\ell + 1)} E(\tau,0, \Phi^{(\ell)}) \log \ell \right).
		\end{equation}
		
	\end{remark}

	\bibliographystyle{alpha}
	\bibliography{refs}

	\author{\noindent \small \textsc{Department of Mathematics, University of Manitoba} \\
		{\it E-mail address:} \texttt{siddarth.sankaran@umanitoba.ca}}

	\author{\noindent \small \textsc{Department of Mathematics, University of Wisconsin, Madison} \\
		{\it E-mail address:} \texttt{shi58@wisc.edu}}

	\author{\noindent \small \textsc{Department of Mathematics, University of Wisconsin, Madison} \\
		{\it E-mail address:} \texttt{thyang@math.wisc.edu}}
	
\end{document}